\documentclass[11pt,a4paper]{article}
\usepackage[T1]{fontenc}
\usepackage[utf8]{inputenc}
\usepackage{lmodern}
\usepackage{amsmath,amssymb,amsthm,mathtools}
\usepackage[margin=28mm]{geometry}
\usepackage{microtype}
\usepackage{booktabs,array,tikz,needspace}
\usepackage{flafter}
\usepackage[hidelinks,unicode]{hyperref}
\hypersetup{pdftitle={Polya's conjecture for Dirichlet eigenvalues of spherical shells and Aharonov-Bohm annuli},
  pdfauthor={Yutian Li},
  pdfsubject={Translated-tail lattice estimates, strict eigenvalue inequalities, and arbitrary magnetic flux}}
\numberwithin{equation}{section}
\newtheorem{theorem}{Theorem}[section]
\newtheorem{lemma}[theorem]{Lemma}
\newtheorem{proposition}[theorem]{Proposition}
\newtheorem{corollary}[theorem]{Corollary}
\theoremstyle{remark}
\newtheorem{remark}[theorem]{Remark}
\allowdisplaybreaks[1]
\newcommand{\wmin}{w_{\min}}
\title{P\'olya's conjecture for Dirichlet eigenvalues of\\spherical shells and Aharonov--Bohm annuli}
\author{Yutian Li\\[3pt]
  \small School of Mathematics and Statistics\\
  \small Nanfang College, Guangzhou 510970, Guangdong, China\\
  \small\href{mailto:yutianlee@gmail.com}{\texttt{yutianlee@gmail.com}}}
\date{20 October 2026}
\begin{document}
\maketitle
\begin{abstract}
We establish strict Dirichlet P\'olya inequalities for spherical
shells in every dimension \(d\ge2\) and for every radius ratio.
The inclusive eigenvalue counting function lies strictly below
the leading Weyl term at every positive energy; consequently,
each eigenvalue strictly exceeds its Weyl prediction, with
multiplicities included. We also prove the same strict inequality
for the Aharonov--Bohm operator on planar annuli at arbitrary flux.
In dimension two, the nonmagnetic result gives a fully analytic
proof of the planar annulus theorem of Filonov, Levitin,
Polterovich and Sher (J. Lond. Math. Soc., 2026). The common ingredient is a lattice-point
bound for the shell action, uniform in the starting point of a tail
and in the lattice translation. Quadrature for the inverse action
controls the change of convexity caused by the inner boundary.
The tail estimate supplies
the hypothesis of the known dimension-reduction argument for
balls, while lattice translations represent the magnetic flux.
Laptev's product argument yields corresponding strict bounds on
products of shells and nonempty bounded open sets.
\end{abstract}

\medskip
\noindent\textit{2020 Mathematics Subject Classification.} 35P15
(primary); 35P20, 33C10, 11P21, 81Q10.

\noindent\textit{Keywords.} P\'olya's conjecture, Dirichlet eigenvalues,
spherical shells, Aharonov--Bohm operator, Bessel phase functions,
lattice points.

\section{Introduction and main results}\label{sec:1}

Let \(\Omega\subset\mathbb R^d\) be a bounded domain, and let
\(\lambda_1^D(\Omega)\le\lambda_2^D(\Omega)\le\cdots\) be the
eigenvalues of its Dirichlet Laplacian, listed with multiplicity. We
use the \emph{inclusive} counting function
\[
N_D(\Omega,\Lambda)=\#\{j\ge1:\lambda_j^D(\Omega)\le\Lambda\}.
\]
Write \(\omega_d=\pi^{d/2}/\Gamma(d/2+1)\) and
\(L_{0,d}=\omega_d/(2\pi)^d\). By Weyl's law,
\(N_D(\Omega,\Lambda)=L_{0,d}|\Omega|\Lambda^{d/2}+o(\Lambda^{d/2})\).
P\'olya's conjecture \cite{Polya54} asserts that the leading term
is an upper bound at every positive energy:
\begin{equation}\label{eq:polya}
N_D(\Omega,\Lambda)\le L_{0,d}|\Omega|\Lambda^{d/2}\qquad(\Lambda>0).
\end{equation}
The Neumann counterpart asks for the reverse bound for the
Neumann counting function.

P\'olya proved \eqref{eq:polya} for domains that tile \(\mathbb R^d\)
\cite{Polya61}, together with its Neumann counterpart under an
additional assumption that Kellner later removed \cite{Kellner}.
Laptev showed that \eqref{eq:polya} passes from
\(\Omega_1\subset\mathbb R^{d_1}\), \(d_1\ge2\), to every product
\(\Omega_1\times\Omega_2\) with \(\Omega_2\) bounded
\cite[Theorem~2.8]{Laptev}. For general domains the Berezin--Li--Yau
inequality \cite{Berezin,LiYau} gives \eqref{eq:polya} with the
right-hand side multiplied by \((1+2/d)^{d/2}\).

Among domains that do not tile, Filonov, Levitin, Polterovich and Sher proved
\eqref{eq:polya} for disks, circular sectors and Euclidean balls of
every dimension \cite{FLPSballs}, and for planar annuli
\cite{FLPS2026}; their proof for annuli combines analytic estimates
with a rigorous computer-assisted verification on a bounded set of
parameters. They also proved the analogous inequality for the
Aharonov--Bohm operator on the disk at every flux
\cite{FLPSmagnetic}, answering a question of Frank and Hansson
\cite{FrankHansson}.

We prove \eqref{eq:polya}, with strict inequality, for spherical shells
in every dimension and for Aharonov--Bohm annuli at every flux. Let
\(A^d_{r,R}=\{x\in\mathbb R^d:r<|x|<R\}\), where \(0<r<R\).

\begin{theorem}[Spherical shells]\label{thm:shell}
For every integer \(d\ge2\) and every \(\Lambda>0\),
\begin{equation}\label{eq:1.1}
N_D(A^d_{r,R},\Lambda)<
\frac{R^d-r^d}{2^d\Gamma(d/2+1)^2}\Lambda^{d/2}.
\end{equation}
Equivalently, for every integer \(j\ge1\),
\[
\lambda_j^D(A^d_{r,R})>
\left(\frac{2^d\Gamma(d/2+1)^2j}{R^d-r^d}\right)^{2/d}.
\]
\end{theorem}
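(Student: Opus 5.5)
We follow the strategy that the abstract outlines, adapted from the Filonov--Levitin--Polterovich--Sher treatment of balls. We reduce the \(d\)-dimensional shell to a family of radial Sturm--Liouville problems and then to a lattice-point count under an explicit curve.

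\emph{Separation of variables.} For \(d\ge2\), the Dirichlet spectrum of \(A^d_{r,R}\) is the union over angular momenta \(\ell\ge0\) of the spectra of the radial problems. Each of these is a Bessel cross-product problem of order \(\nu=\ell+(d-2)/2\) on \((r,R)\), counted with the multiplicity of degree-\(\ell\) spherical harmonics. For \(d=2\) each \(\nu=|\ell|\ge1\) is doubled, and for \(d=3\) the multiplicity is \(2\ell+1\).

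\emph{Counting via a phase function.} For fixed \(\nu\) and \(k=\sqrt\Lambda\), the number of radial eigenvalues \(\le\Lambda\) will be bounded by means of a Prüfer/Bessel phase. We aim to show that it is at most the number of positive integers \(m\) with \(m\le \tfrac1\pi\,\Phi(\nu,k)+\tfrac14\) (or with a similar shift). Here
\[
\Phi(\nu,k)=\int_{\max(r,\nu/k)}^{R}\sqrt{k^2-\nu^2/\rho^2}\,d\rho
\]
is the WKB action of the shell. We would obtain the phase comparison from Sturm comparison together with monotonicity of the Bessel phase functions. This is the step where the shell differs from the ball: the action has a kink at \(\nu=kr\). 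For \(\nu<kr\) the inner boundary truncates the integral, and the convexity of \(\nu\mapsto\Phi\) changes there.

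\emph{Lattice count.} The count then becomes a weighted lattice-point count in the region \(\{(\nu,m):m\le\Phi(\nu,k)/\pi+\text{shift}\}\). Its area, weighted by the spherical-harmonic multiplicities, reproduces the Weyl term \(\frac{R^d-r^d}{2^d\Gamma(d/2+1)^2}\Lambda^{d/2}\). For \(d=2\) we must show that the number of lattice points, counted with the half-integer shifts, is strictly less than the area. We plan to do this by inverting the action, writing \(\nu\) as a function of \(m\), and counting column by column in \(m\). A quadrature (trapezoid/midpoint) estimate for the inverse action then shows that each column's contribution is below its integral. The convexity change is handled because the inverse action is concave on each side of the kink. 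Its tail, starting at an arbitrary point and uniform in the translation, yields the translated-tail bound. For higher \(d\), we feed this uniform tail bound into the FLPS dimension-reduction argument for balls. That argument writes the multiplicity-weighted sum over \(\ell\) as an iterated sum of \(2\)-dimensional counts with shifted starting index \(\nu_0=(d-2)/2\), and integrates the resulting strict inequalities.

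\emph{Main obstacle.} The hardest part will be the \(d=2\) lattice estimate near the transition \(\nu\approx kr\), where neither the ball-type concavity nor the straight-line behaviour applies. We also need strictness to hold uniformly in the tail start and the translation, including at small \(\Lambda\). We would first try a direct second-derivative bound on the inverse action on each side of the kink, with an explicit error term absorbed by the \(1/4\) shift. For very small \(\Lambda\), where few points exist, we would check strictness directly from the lower bound \(\lambda_1>\pi^2/(R-r)^2\).
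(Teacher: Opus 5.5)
Your outline follows the paper's architecture: a strict phase bound, the inverse action $x=A^{-1}$ sampled at the nodes $j-1/4$, a tail estimate uniform in the start $a$ and the translation $\alpha$, and then the FLPS angular lifting. The step that carries the argument is false as you state it, however. It is not true that every column (unit cell $[j-1,j]$ in the level variable) has nonnegative quadrature error $2E_j-1$, where $E_j=\int_{j-1}^j x\,dt-x(j-1/4)$. The cell containing the interface $c=A(\mu)$ can lose. Take $\mu=2$ (outer radius $1$), let $K\to\infty$ with $T=(K-\mu)/\pi$ an integer, and consider the last cell $[T-1,T]$. In the limit:
\begin{itemize}
\item the outer part is $x(t)=2(T-t)+4/\pi$;
\item the inner branch has length $1-2/\pi$ and $\int_c^T x\,dt=\mu^2/8$;
\item hence $\int_{T-1}^T x\,dt=1/2+4/\pi+4/\pi^2\approx2.179$, while $x(T-1/4)\approx1.705$.
\end{itemize}
Thus $2E_T-1\approx-0.05$. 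With $a$ slightly below $x(T-1/4)-3/2$ and $\alpha=1/2$, that column genuinely contains $4$ lattice points against a strip area of about $3.95$. So no cell-by-cell estimate can give a bound uniform in $a$ and $\alpha$. The missing idea is global bookkeeping:
\begin{itemize}
\item The paper proves $E_j>w_{\min}/4$, with $w_{\min}=\pi/\arccos(\mu/K)>2$, only for cells not crossing the interface. This gives each such cell a margin $w_{\min}/2-1>0$.
\item The first cell has a surplus $E_1>w_{\min}/3$ when $c\ge1$; when $c<1$ the paper proves $2E_1-1>0$ directly.
\item The unique crossing cell (complete or terminal) gets only weak bounds, such as $E/w_{\min}\ge 9/32-C/12$.
\item Its possible deficit is paid for by the first-cell surplus plus the margins accumulated over all $n$ cells, compared with the loss by completing a square.
\end{itemize}
The small inner radii $\mu\le1$ are handled separately by comparison with the ball, $D_A\ge D_K-\mu^2/4>1/3-1/4$.

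Two further points need repair.
\begin{itemize}
\item \textbf{Convexity.} The inverse action is not concave on both sides of the kink. It is convex on the outer branch $t<c$ (where $x>\mu$) and concave on the inner branch $c<t<T$, where moreover $-x'$ blows up like $(T-t)^{-1/2}$. Concavity works against a node placed at three quarters of the cell, and a second-derivative bound is useless near $T$. The paper needs the shape lemma that $\sqrt{T-t}\,(-x'(t))$ is increasing and concave (proved by a moment inequality), together with explicit signed-kernel integrals.
\item \textbf{Low energies.} Your check rests on a false bound. $\lambda_1>\pi^2/(R-r)^2$ fails for $d=2$, because the order-$0$ radial potential $-1/(4s^2)$ is negative, and it holds with equality for $d=3$. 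Since the tail estimate needs $T>1$, the range $T\le1$ must be treated as in the paper. In $d=2$ only order $0$ can contribute; $q(0)\le1$ vanishes for $T\le3/4$, and the Weyl term exceeds $9\pi^2/64>1$ for $T>3/4$. In $d=3$, the single eigenvalue at $T=1$ is compared with a Weyl term exceeding $2\pi^2/9>1$.
\end{itemize}
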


The right-hand side of \eqref{eq:1.1} is
\(L_{0,d}|A^d_{r,R}|\Lambda^{d/2}\); both sides vanish at
\(\Lambda=0\). Because the count is inclusive, evaluation of
\eqref{eq:1.1} at \(\Lambda=\lambda_j^D\) gives the eigenvalue bound,
multiplicities included. For \(d=2\), Theorem~\ref{thm:shell} is the
theorem of \cite{FLPS2026}; we give an analytic proof.
For \(d\ge3\) it extends the Dirichlet theorem for balls
\cite{FLPSballs} to shells of every radius ratio. The two-term
asymptotics of Guo, Jiang, Wang and Yang \cite{GJWY} imply
\eqref{eq:1.1} only above an unspecified threshold.

Our second result concerns magnetic flux through the hole of a planar
annulus. For \(\alpha\in\mathbb R\), define
\[
\mathbf A_0(x)=\frac{(-x_2,x_1)}{|x|^2},\qquad
H_\alpha=(-i\nabla-\alpha\mathbf A_0)^2
\quad\text{on }A^2_{r,R},
\]
with Dirichlet boundary conditions, defined by its closed quadratic
form on \(H_0^1(A^2_{r,R})\). The magnetic potential
\(\alpha\mathbf A_0\) is smooth on the closed annulus and its
magnetic field vanishes there. Its circulation
around the hole is \(2\pi\alpha\), so \(\alpha\) is the normalized flux.
Denote the eigenvalues, with multiplicities,
by \(\lambda_j^\alpha\) and their inclusive count by \(N_\alpha\).

\begin{theorem}[Aharonov--Bohm annuli]\label{thm:magnetic}
For every \(\alpha\in\mathbb R\) and \(\Lambda>0\),
\[
N_\alpha(A^2_{r,R},\Lambda)<\frac{R^2-r^2}{4}\Lambda,
\qquad
\lambda_j^\alpha(A^2_{r,R})>\frac{4j}{R^2-r^2}\quad(j\ge1).
\]
\end{theorem}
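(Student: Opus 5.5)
Separation of variables reduces Theorem~\ref{thm:magnetic} to a lattice-point count. In polar coordinates the operator $H_\alpha$ acts on the Fourier mode $e^{ik\theta}$, $k\in\mathbb Z$, as the radial Bessel operator of order $\nu=|k-\alpha|$ on $(r,R)$, with Dirichlet conditions at both ends. Thus
\[
N_\alpha(A^2_{r,R},\Lambda)=\sum_{k\in\mathbb Z} n\bigl(|k-\alpha|,\sqrt\Lambda\bigr),
\]
where $n(\nu,\mu)$ is the number of zeros in $(0,\mu]$ of the cross product $J_\nu(r\mu)Y_\nu(R\mu)-J_\nu(R\mu)Y_\nu(r\mu)$. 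Replacing $\alpha$ by $\alpha+1$ permutes the modes, and replacing $\alpha$ by $-\alpha$ does too, so we may assume $\alpha\in[0,1/2]$. The set $\{k-\alpha\}$ is then a translate of the lattice $\mathbb Z$. For $\alpha=0$ this is the planar case of Theorem~\ref{thm:shell}, so the plan is simply to run that argument with a general translation.

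The first step is a phase-function (Sturm/Prüfer) bound for each mode. Let $\mu=\sqrt\Lambda$ and define the shell action
\[
G(\nu)=\frac1\pi\int_{\max(r,\nu/\mu)}^{R}\sqrt{\mu^2-\nu^2/\rho^2}\,d\rho .
\]
I would prove $n(\nu,\mu)\le G(\nu)+c(\nu)$, with a correction $c(\nu)$ that is negative enough; the model is $n\le G-\tfrac14$ for the "free" modes with $\nu<r\mu$, in the spirit of the ball case. This should follow from monotonicity of the Bessel phase (Horn/Watson-type bounds) applied at both endpoints. Summing over $\nu=|k-\alpha|$ then reduces the theorem to the inequality
\[
\sum_{k}\bigl(G(|k-\alpha|)+c\bigr)_+<\int_{\mathbb R}G(|t|)\,dt=\frac{R^2-r^2}{4}\Lambda .
\]

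The second step is this lattice-sum inequality, which must hold uniformly in the translation $\alpha$. The function $G$ is concave in $\nu$ on $[0,r\mu]$, while on $[r\mu,R\mu]$ it is the ball action, convex near its endpoint. I would apply the paper's translated-tail lattice estimate, which is uniform in the starting point of a tail and in the lattice translation. On the concave region, the midpoint/trapezoid comparison for translated lattices controls the sum. On the convex tail, I would use the quadrature for the inverse action $G^{-1}$, counting lattice points under the graph by integrating horizontally. The negative correction $c$ has to absorb the error from the change of convexity at $\nu=r\mu$. This is the step I expect to be the main obstacle. For small $\mu$, when only a few modes are occupied, I would check directly using the first zeros of the cross product, through the lower bound $\lambda_1^\alpha\ge\pi^2/(R-r)^2$ together with counts of low modes.

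Finally, strictness follows because each per-mode bound is strict for occupied modes, or because the lattice-sum inequality is strict. Evaluating the inclusive count at $\Lambda=\lambda_j^\alpha$ gives $j\le N_\alpha<\tfrac{R^2-r^2}{4}\lambda_j^\alpha$, which is the eigenvalue bound.
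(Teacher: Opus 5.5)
Your outline (separate into the orders $|k-\alpha|$, bound each mode by a phase estimate, apply a translation-uniform lattice inequality, check low energies separately) matches the paper. However, two of your concrete steps are false.

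\textbf{The per-mode bound has the wrong sign.} Scale to $R=1$ and write $K=\sqrt\Lambda$, $\mu=\rho K$, $T=(K-\mu)/\pi$, with $A$ your $G$. Lemma~\ref{lem:phase} gives $\gamma_{\rho,K}(\nu)<A(\nu)+1/4$. Hence the radial count is at most $q(\nu)=\#\{j\ge1:\ j-1/4<A(\nu)\}$. The slack is $+1/4$; the ball case also has $+1/4$. A bound $n\le A(\nu)-1/4$ fails already at $\nu=1/2$, an order present whenever $\alpha\in1/2+\mathbb Z$. There the radial operator is exactly $-d^2/ds^2$ on $(\rho,1)$, so $n(1/2)=\lfloor T\rfloor$. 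Since $A(1/2)<T$, an integer $T=N$ gives $n(1/2)=N>A(1/2)-1/4$. So your reduction to $\sum_k\bigl(A(|k-\alpha|)+c\bigr)_+<\int_{\mathbb R}A(|t|)\,dt$ with $c<0$ is invalid. What must be shown is $\sum_m q(|m-\alpha|)<2\int_0^KA$, a lattice count under the graph raised by $1/4$. This is exactly Theorem~\ref{thm:tail} with $a=0$, which requires $T>1$. The $+1/4$ is the whole difficulty. The paper handles it by sampling $x=A^{-1}$ at the nodes $j-1/4$ (Lemma~\ref{lem:3.2}). Every regular cell beats the affine benchmark by $\eta=\wmin/2-1>0$. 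The first cell and the single cell crossing the interface $t=c$ are estimated separately, with comparison to the ball when $\mu\le1$. Your sketch of this step (trapezoid comparison on the concave part, a negative $c$ absorbing the convexity change) is not an argument. If you invoke the tail theorem with the correct $q$, the case $T>1$ follows in two lines via Proposition~\ref{prop:2.1}.

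\textbf{The low-energy bound is false.} You use $\lambda_1^\alpha\ge\pi^2/(R-r)^2$, which fails whenever $\delta=\operatorname{dist}(\alpha,\mathbb Z)<1/2$, including the ordinary annulus $\alpha=0$. The order-$\delta$ radial operator $-d^2/ds^2+(\delta^2-1/4)s^{-2}$ has a negative potential, so its first eigenvalue lies strictly below $\pi^2/(1-\rho)^2$. The correct argument for $T\le1$ runs as follows.
\begin{itemize}
\item Inequality \eqref{eq:2.3} excludes every order $\nu>1/2$.
\item If $\delta<1/2$, only the order $\delta$ can contribute. Its count is at most $q(\delta)\le q(0)\le1$, and it is nonzero only for $T>3/4$. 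There $(K^2-\mu^2)/4\ge\pi^2T^2/4>9\pi^2/64>1$.
\item If $\delta=1/2$, the two modes of order $1/2$ first contribute at $T=1$. They give count $2<\pi^2/4<(K^2-\mu^2)/4$.
\end{itemize}
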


Theorem~\ref{thm:magnetic} answers the question raised in
\cite[Remark~1.5]{FLPSmagnetic} for the Dirichlet problem on annuli.
Frank and Hansson had checked the Dirichlet inequality numerically
for some annuli \cite{FrankHansson}.
The disk proof of \cite{FLPSmagnetic} uses a translated-lattice
estimate for nonnegative decreasing convex \(1/2\)-Lipschitz functions
\cite[Theorem~2.3]{FLPSmagnetic}. The shell action is not convex, and
Theorem~\ref{thm:tail} below replaces that estimate. For integer
\(\alpha\) the operator \(H_\alpha\) is unitarily equivalent to the
Dirichlet Laplacian, and Theorem~\ref{thm:magnetic} reduces to the
planar case of Theorem~\ref{thm:shell}.

Laptev's product argument gives strict Dirichlet bounds on
\(A^d_{r,R}\times\Omega_2\) for every \(d\ge2\) and every nonempty
bounded open set \(\Omega_2\subset\mathbb R^k\). It also gives the
corresponding magnetic bounds on \(A^2_{r,R}\times\Omega_2\), with
the flux acting in the annular factor (Corollary~\ref{cor:products}).
For example, the strict inequality for every cylindrical pipe
\(\{x\in\mathbb R^3:r<(x_1^2+x_2^2)^{1/2}<R,\ 0<x_3<h\}\)
with \(h>0\) already follows from the planar annulus theorem
\cite{FLPS2026} and Laptev's result \cite{Laptev}.

\subsection{The lattice estimate}

The proof follows the scheme of \cite{FLPSballs,FLPS2026}.
After scaling to \(R=1\), put \(K=\sqrt\Lambda\) and \(\mu=rK\).
Let \(G_K\) and \(G_\mu\) denote the ball actions defined in
Section~\ref{sec:2}; the shell action is \(A=G_K-G_\mu\).
For angular order \(\nu\ge0\), let \(q(\nu)\) count the integers
\(n\ge1\) satisfying \(n-1/4<A(\nu)\).
The Bessel phase bound in Lemma~\ref{lem:phase} shows that at most
\(q(\nu)\) radial eigenvalues lie at or below \(K^2\).
Thus the spectral counting function is bounded by a weighted
count of lattice points under the graph of \(A\).
For the ball, \(G_K\) is decreasing, convex
and \(1/2\)-Lipschitz, and the lattice inequality
\cite[Theorem~5.1]{FLPSballs} applies to it and to all of its tails.
For the shell, \(A\) is concave on \((0,\mu)\) and convex on
\((\mu,K)\). In \cite{FLPS2026} the corresponding count is estimated
by trapezoidal floor sums on one, two or three subintervals of
\([0,\lfloor K\rfloor+1]\), depending on the region of parameters, and
a rigorous computation treats a remaining bounded parameter region.

The main new ingredient is Theorem~\ref{thm:tail}, a lattice-point
inequality for the shell action that is uniform in the starting point
\(a\ge0\) of a tail and in a translation \(\alpha\in\mathbb R\) of the
lattice. We prove it using the inverse \(x=A^{-1}\) of \(A\) on \([0,K]\).
The count becomes a quadrature of \(x\) at the points \(j-1/4\), and
the quadrature error splits into contributions of unit cells
(Lemma~\ref{lem:3.2}). The inverse is convex where \(x>\mu\) and
concave where \(x<\mu\). A cell contained in one branch has an
explicit positive margin; only the first cell and the unique cell that
crosses the change of branch need separate estimates. Uniformity in
\(a\) lets the dimension-reduction argument of \cite[\S7]{FLPSballs}
recover the exact Weyl constant from the spherical-harmonic
multiplicities (Lemma~\ref{lem:angular}); uniformity in \(\alpha\)
gives the magnetic theorem.

\subsection{Related results and organisation}

Quantitative P\'olya-type inequalities for annular sectors were
obtained by Dai and Sun \cite{DSsectors}; those sectors retain their
radial boundary faces, including at aperture \(2\pi\), whereas the
angular conditions here are periodic or magnetic.

Recent preprints establish the Neumann counterpart for higher-dimensional
balls, independently by Li \cite{LiNeumann} and by Filonov, Levitin,
Polterovich and Sher \cite{FLPSNeumann}, and for planar annuli by
Dai and Sun \cite{DSNeumann}.
A companion preprint \cite{LiNeumannShells} proves the strict
Neumann P\'olya inequality for concentric spherical shells in
dimensions \(d\ge3\).
Together with the planar annulus theorem of Dai and Sun, that result
and the present theorem establish strict P\'olya inequalities for
both boundary conditions on spherical shells in every dimension
\(d\ge2\).

Tang and Wang \cite{TW} have recently announced a proof of the strict
Dirichlet inequality for bounded simply connected planar domains.

Section~\ref{sec:2} states the translated-tail theorem and proves the
spectral reduction for real orders. Assuming the tail theorem,
Section~\ref{sec:4} proves both main theorems, including their
low-frequency endpoints. The analytic proof of the tail estimate then
occupies Sections~\ref{sec:3}--\ref{sec:6}: inverse-action quadrature
in Section~\ref{sec:3}, geometric estimates in Section~\ref{sec:5},
and positivity of the quadrature errors in Section~\ref{sec:6}.
Section~\ref{sec:applications} gives product bounds, an enhanced
Dirichlet bound for each fixed shell, and examples with small cubical
holes, followed by quantitative questions. Appendix~\ref{app:shape}
contains the moment proof of the inverse-slope shape used in the
quadrature argument.

\section{The translated-tail estimate and spectral reduction}\label{sec:2}

\subsection{The shell action}

Scale to outer radius one and write
\(\rho=r/R\), \(K=R\sqrt\Lambda\), and \(\mu=\rho K\).
The count vanishes when \(K=0\). For \(z\ge0\), the radial action is
\[
A(z)=\frac1\pi\int_\mu^K
\sqrt{\left(1-\frac{z^2}{s^2}\right)_+}\,ds,
\qquad u_+:=\max(u,0).
\]
This is the available radial phase space at angular order \(z\).
Integration gives \(A=G_K-G_\mu\), where, for \(L>0\),
\[
G_L(z)=\frac{\sqrt{L^2-z^2}-z\arccos(z/L)}{\pi}\quad(0\le z\le L),
\qquad G_L(z)=0\quad(z>L),
\]
\begin{equation}\label{eq:2.1}
[s]_<:=\max(0,\lceil s\rceil-1),
\qquad q(z)=[A(z)+1/4]_<,\qquad T=A(0)=\frac{K-\mu}{\pi}.
\end{equation}

The function \(A\) decreases from \(T\) to zero on \([0,K]\). Its graph changes curvature at \(z=\mu\): for \(\mu<z<K\) the radial turning point lies inside the shell, whereas for \(z<\mu\) the whole radial interval contributes.

\begin{theorem}[Uniform translated-tail estimate]\label{thm:tail}
Suppose \(0<\mu<K\) and \(T=(K-\mu)/\pi>1\). Then
\begin{equation}\label{eq:tail}
\sum_{m\in\mathbb Z}q(a+|m+\alpha|)
\le 2\int_a^\infty A(z)\,dz
\qquad(a\ge0,\ \alpha\in\mathbb R),
\end{equation}
with strict inequality whenever \(a<K\).
\end{theorem}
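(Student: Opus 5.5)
We will prove \eqref{eq:tail} by turning the lattice count into a quadrature of the inverse action, as the introduction describes. Let \(x=A^{-1}\colon[0,T]\to[0,K]\) be the inverse of \(A\) on \([0,K]\), which is decreasing. For fixed \(n\ge1\) we have \(q(z)\ge n\) iff \(n-1/4<A(z)\), that is, iff \(z<x(n-1/4)\) (we need \(n-1/4<T\)). Hence
\[
\sum_{m\in\mathbb Z}q(a+|m+\alpha|)=\sum_{1\le n<T+1/4}\#\{m\in\mathbb Z:\ a+|m+\alpha|<x(n-\tfrac14)\}.
\]
For an interval of half-length \(h=(x(n-1/4)-a)_+\), the number of points \(m\) with \(|m+\alpha|<h\) is at most \(2h\) rounded in the appropriate way. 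The worst case over \(\alpha\) is attained at the translation giving \(\lceil 2h\rceil\) points. On the right-hand side, \(2\int_a^\infty A\,dz=2\int_0^T (x(t)-a)_+\,dt\) by Fubini. So the claim becomes a one-dimensional quadrature inequality: the sum over \(n\) of \(\lceil 2(x(n-1/4)-a)_+\rceil\)-type counts, or a sharper \(\alpha\)-uniform bound, must not exceed \(2\int_0^T(x(t)-a)_+\,dt\).

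We split \([0,T]\) into unit cells \([n-1,n]\), in the spirit of Lemma~\ref{lem:3.2}. The quadrature node \(n-1/4\) sits inside each cell, and the error is the cell integral of \(2(x-a)_+\) minus the count at the node. This error is handled branch by branch.

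\begin{itemize}
\item \emph{Convex branch, \(x>\mu\), \(t\) small.} Here \(A\) is convex and decreasing with \(|A'|\le 1/2\), so \(|x'|\ge2\). The FLPS argument applies: convexity plus the placement of the node at \(n-1/4\) gives a positive margin per cell. This margin absorbs the rounding loss of at most one lattice point, because the steep slope makes the integral over the cell exceed \(2h(n-1/4)\) by at least a fixed amount. Specifically, for a convex function the average over \([n-1,n]\) exceeds its value at the point \(n-1/2\), and the gap from \(n-1/2\) to \(n-1/4\) contributes \(\tfrac14|x'|\cdot 2\ge1\).
\item \emph{Concave branch, \(x<\mu\), \(t\) near \(T\).} The concave branch needs explicit bounds on \(x'\). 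Since \(A'(z)=-\frac1\pi\int_\mu^K\frac{z/s^2}{\sqrt{1-z^2/s^2}}\,ds\) is bounded below by \(-\frac1\pi\log(K/\mu)\)-type quantities, \(|x'|\) is large near \(t=T\) (for \(z\to0\), \(A'(z)\approx -\frac{z}{\pi}(1/\mu-1/K)\)). We will show the one-sided steepness still yields a margin of at least one point per cell, using a Taylor expansion of \(x\) at the node together with concavity, bounded via the moment/inverse-slope shape of Appendix~\ref{app:shape}.
\item \emph{Boundary cells.} The first cell \([0,1]\) (top, near \(t=0\) where \(x=K\)) and the last partial cell near \(t=T\), where the \(\lfloor T+1/4\rfloor\) truncation occurs, need separate treatment. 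There we use \(T>1\) and the explicit square-root behaviour \(x(t)\approx K-c\,t^{2/3}\) near \(t=0\) to check positivity directly.
\end{itemize}

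Uniformity in \(a\) is obtained because truncation at level \(a\) only removes cells where \(x\le a\). It replaces one cell by a partial cell, on which \((x-a)_+\) is still convex or concave with the same slope bounds, so the same margins apply with \(a\) subtracted. Uniformity in \(\alpha\) comes from bounding the lattice count by \(2h+1\) (less if \(2h\) is not an integer) and absorbing the \(+1\) into the margin of each cell.

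The main obstacle is the unique cell that straddles \(t=A(\mu)\), where the convexity of \(x\) changes. Neither one-sided Jensen argument applies there. We plan to split that cell at \(A(\mu)\) and bound \(x\) below by its tangent lines on each side, with the slope at \(z=\mu\) computed from \(A'(\mu)=-\frac1\pi\sqrt{1-\rho^2}\cdot(\ldots)\). We then check that the chord–tangent deficit is smaller than the combined margins of the neighbouring cells, reallocating surplus from adjacent cells if necessary.

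Strictness for \(a<K\) follows because at least the first nonempty cell has a strictly positive margin.
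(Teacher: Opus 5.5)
Your architecture matches the paper's: the inverse action \(x=A^{-1}\), unit cells sampled at \(j-\tfrac14\), the bound \(<2r+1\) for the translated lattice count, and the convex-branch margin \(E_j>\wmin/4\) with \(\wmin=\pi/\theta>2\). The plan breaks down exactly where the shell differs from the ball.

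On the inner branch \(x\) is concave, so \(w=-x'\) \emph{increases}. Write \(E_j=\int_0^1k(s)\,w(j-1+s)\,ds\) with \(k(s)=s-\mathbf 1_{\{s\ge3/4\}}\). Then concavity yields only the \emph{upper} bound \(E_j\le w(j-\tfrac14)/4\). The lower bound \(w\ge\wmin\) does not help either. A steep rise of \(w\) on \([j-\tfrac14,j]\) feeds the negative part of \(k\) and can make \(E_j\) arbitrarily negative, and such a rise occurs near \(T\), where \(w\sim P/\sqrt{T-t}\). So ``steepness plus Taylor expansion and concavity'' cannot produce a margin; you need a bound on the \emph{growth} of \(w\). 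The paper gets it from Lemma~\ref{lem:5.1}: \(H=\sqrt{T-t}\,w\) is increasing and concave. Put \(b=j-1\), \(l=T-b\), \(L=\min(1,l)\). Replacing \(H\) by its secant on \([0,\tfrac34]\) in \(E_j=\int_0^Lk(s)H(b+s)(l-s)^{-1/2}\,ds\) reduces everything to the explicit inequalities \(I_0(l)>1/(4\sqrt l)\) and \(I_1(l)>0\). These give \(E_j>w(b)/4\ge\wmin/4\) (Lemma~\ref{lem:5.3}). Pointing to the appendix does not close this step unless the shape of \(H\) is used in this way.

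Your crossing-cell plan fails as stated. Tangent lines lie \emph{above} a concave function, so on the inner side they bound \(x\) from above, not below. A lower bound for \(\int x\) there requires an upper bound on the inner slope, such as \(w\le P/\sqrt{T-t}\) (Corollary~\ref{cor:5.2}) or \(Z(v)<8v/3\) (Lemma~\ref{lem:5.4}). Even with these, the paper only obtains \(E/\wmin\ge 9/32-C/12\), or \(9/32-C/(6\sqrt2)\) for a terminal crossing. In general this does not give \(2E>1\), so the loss must be paid for elsewhere, under two constraints your plan ignores:
\begin{itemize}
\item Uniformity in \(a\) requires \(Q_n>0\) for \emph{every} \(n=q(a)\) (Lemma~\ref{lem:3.2}). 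The crossing cell may therefore be the last included cell, and surplus can only come from cells to its left.
\item The per-cell margin \(\eta=\wmin/2-1\) tends to \(0\) as \(\mu/K\to0\), so a few neighbouring cells cannot absorb a loss of fixed size.
\end{itemize}
The paper uses the first-cell surplus \(E_1>\wmin/3\), which comes from the monotonicity of \(t^{1/3}w(t)\) rather than a local expansion at \(t=0\). It adds the margins of all \(n\) cells, \(Q_n/\wmin>q_*+2E/\wmin-\tfrac13\) with \(q_*=n(\pi/2-\theta)/\pi\). It then closes with \(C^2<16q_*\), which follows from \(K<4\pi n\), and completes the square.

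Your plan also leaves two cases uncovered:
\begin{itemize}
\item \(c<1\), where the first cell itself crosses the interface (Lemma~\ref{lem:5.8});
\item \(\mu\le1\), which the paper treats by comparison with the ball, \(D_A\ge D_K-\mu^2/4>1/12\) (Proposition~\ref{prop:3.4}).
\end{itemize}
Finally, \((x-a)_+\) is not concave on the inner branch. The clean route to uniformity in \(a\) is \((x-a)_+\ge x-a\) on \([0,q(a)]\), after which the \(a\)-terms cancel exactly.
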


For \(\alpha=0\), the summands have the planar angular weights
\(1,2,2,\ldots\). Varying \(a\) supplies the tails needed for
higher-dimensional angular multiplicities. Taking \(a=0\) and varying
\(\alpha\) supplies the magnetic count. The quadrature reduction is
given in Section~\ref{sec:3}, and the required estimates are proved
in Sections~\ref{sec:5}--\ref{sec:6}.

\subsection{Real-order spectral reduction}

For real \(\nu\ge0\) and \(s>0\) write
\(J_\nu(s)+iY_\nu(s)=M_\nu(s)e^{i\vartheta_\nu(s)}\), where
\(M_\nu>0\) and the phase \(\vartheta_\nu\) is continuous with
\(\vartheta_\nu(0+)=-\pi/2\).
The following estimate for the phase increment across the shell
follows the argument of
\cite[Lemma~5.2, (5.6) and (5.8)]{FLPS2026}.
We include the proof for every real \(\nu\ge0\), since the
higher-dimensional and magnetic applications require half-integer
and arbitrary real orders.
Sharper two-sided enclosures of the phase are given in
\cite{FLPSphase}.

\begin{lemma}[Phase bound]\label{lem:phase}
Let \(0<\rho<1\), \(K>0\), \(\mu=\rho K\), and \(\nu\ge0\). Then
\begin{equation}\label{eq:2.4}
G_K(\nu)-G_\mu(\nu)
<
\gamma_{\rho,K}(\nu):=
\frac{\vartheta_\nu(K)-\vartheta_\nu(\mu)}{\pi}
<G_K(\nu)-G_\mu(\nu)+\frac14 .
\end{equation}
\end{lemma}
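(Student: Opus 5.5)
Write the phase derivative via Nicholson's formula. With \(M_\nu^2=J_\nu^2+Y_\nu^2\), the Wronskian gives
\[
\vartheta_\nu'(s)=\frac{2}{\pi s M_\nu(s)^2}.
\]
The plan is to compare this, pointwise in \(s\), with the classical action density
\[
g(s)=\frac{\sqrt{(s^2-\nu^2)_+}}{s}=\pi\,\frac{d}{ds}G_s(\nu)\Big|_{\text{as a function of }s}.
\]
Note that \(\frac{d}{dL}G_L(\nu)=\sqrt{(L^2-\nu^2)_+}/(\pi L)\). Hence
\[
G_K(\nu)-G_\mu(\nu)=\frac1\pi\int_\mu^K\frac{\sqrt{(s^2-\nu^2)_+}}{s}\,ds,
\]
and it suffices to integrate two-sided bounds for \(\pi s\vartheta_\nu'(s)=2/M_\nu(s)^2\) against this density.

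\textbf{Lower bound.} I would use the standard fact, a consequence of Nicholson's integral
\[
M_\nu^2(s)=\frac{8}{\pi^2}\int_0^\infty K_0(2s\sinh t)\cosh 2\nu t\,dt,
\]
that for \(\nu\ge 1/2\) the function \(sM_\nu^2(s)\) decreases to \(2/\pi\) as \(s\to\infty\). This gives
\[
\frac{\pi}{2}\,s\,M_\nu^2(s)\cdot\frac{\sqrt{s^2-\nu^2}}{s}\le 1
\]
(the Watson-type inequality \(\pi s M_\nu^2\sqrt{s^2-\nu^2}/2\le1\) for \(s>\nu\)). Hence \(\vartheta_\nu'(s)\ge \sqrt{s^2-\nu^2}/s\) for \(s>\nu\), and trivially \(\vartheta_\nu'>0=g\) for \(s<\nu\). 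Integrating over \([\mu,K]\) gives the first inequality of \eqref{eq:2.4}. Strictness is automatic: on \((\mu,\min(\nu,K))\) the integrand gap is positive, and when \(\nu\le\mu\) the Watson inequality is strict for finite \(s\). For \(0\le\nu<1/2\), \(sM_\nu^2\) is increasing and the bound \(\pi sM_\nu^2/2\le1\) holds directly, giving \(\vartheta_\nu'\ge 1/s\cdot\ldots\ge g\). So the lower bound should follow in both regimes with only modest bookkeeping, and I would cite Watson's monotonicity results for it.

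\textbf{Upper bound.} This is the harder direction. Here I would not bound the integrand pointwise, since near the turning point \(s=\nu\) the density \(g\) vanishes while \(\vartheta_\nu'\) does not. Instead I would bound the total excess. The increment \(\vartheta_\nu(K)-\vartheta_\nu(\mu)\) is at most \(\vartheta_\nu(K)-\vartheta_\nu(0+)\) minus the part below \(\mu\), so it suffices to show that
\[
E(s)=\vartheta_\nu(s)-\pi G_s(\nu)
\]
is increasing in \(s\) with \(E(0+)=-\pi/2\) and \(E(\infty)\le-\pi/4\). The limits are as follows. As \(s\to\infty\), \(\vartheta_\nu(s)=s-\nu\pi/2-\pi/4+o(1)\) and \(\pi G_s(\nu)=s-\nu\pi/2+o(1)\), so \(E(\infty)=-\pi/4\). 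At \(s=0\), \(E=-\pi/2\). Monotonicity of \(E\) is exactly the lower-bound inequality \(\vartheta_\nu'\ge g\) just proved. Then
\[
\vartheta_\nu(K)-\vartheta_\nu(\mu)-\pi(G_K-G_\mu)=E(K)-E(\mu)<E(\infty)-E(0+)=\pi/4.
\]
This yields the second inequality, strict because \(E\) is strictly increasing. I expect this argument follows \cite[(5.6), (5.8)]{FLPS2026}.

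The main obstacle is establishing \(\vartheta_\nu'\ge g\) uniformly for all real \(\nu\ge0\), including small non-integer orders and the regime \(s\) near \(\nu\). I would rely on Nicholson's formula and the known monotonicity of \(s\mapsto s M_\nu^2\sqrt{1-\nu^2/s^2}\) (the classical statement that \((s^2-\nu^2)^{1/2}M_\nu^2(s)\) increases to \(2/\pi\) for \(\nu\ge1/2\)). For \(\nu<1/2\) I would treat the case separately via monotonicity of \(sM_\nu^2\).
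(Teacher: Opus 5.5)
Your argument is the paper's. Your \(E(s)=\vartheta_\nu(s)-\pi G_s(\nu)\) is exactly \(-\pi\delta_\nu(s)\), and the proof has the same three parts:
\begin{itemize}
\item the strict pointwise bound \(\vartheta_\nu'(s)>\sqrt{s^2-\nu^2}/s\) for \(s>\nu\), which is trivial for \(s<\nu\);
\item the limits \(E(0+)=-\pi/2\) and \(E(\infty)=-\pi/4\);
\item the resulting \(0<E(K)-E(\mu)<\pi/4\).
\end{itemize}

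One step must be corrected. For \(\nu>1/2\), the monotonicity of \(sM_\nu^2(s)\) points the wrong way. Because \(sM_\nu^2\) decreases to \(2/\pi\), you get \(sM_\nu^2\ge2/\pi\), i.e.\ \(\vartheta_\nu'\le1\). That is an upper bound on the phase derivative. Multiplying by \(\sqrt{s^2-\nu^2}/s<1\) cannot then yield \(\tfrac{\pi}{2}\sqrt{s^2-\nu^2}\,M_\nu^2(s)<1\).

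That inequality is strictly sharper and has to be imported as a separate fact. You can use the monotonicity of \(\sqrt{s^2-\nu^2}\,M_\nu^2(s)\) (increasing to \(2/\pi\)) that you name in your last paragraph. Alternatively, as the paper does, use Horsley's Eq.~(56), \(\vartheta_\nu'(x)>\sqrt{x^2-\nu^2}/x\).

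Your treatment of \(0\le\nu\le1/2\) is fine once the garbled line is fixed. There \(sM_\nu^2\le2/\pi\) gives \(\vartheta_\nu'\ge1\ge\sqrt{s^2-\nu^2}/s\), with at least one of the two inequalities strict. With the correct citation for \(\nu>1/2\), your proof is complete, including both strictness claims, and coincides with the paper's.
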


\begin{proof}
For \(x>0\), put
\(\delta_\nu(x)=G_x(\nu)-\vartheta_\nu(x)/\pi\), using the convention
\(G_x(\nu)=0\) for \(x\le\nu\). The Wronskian gives
\[
\vartheta_\nu'(x)=
\frac{2}{\pi x\{J_\nu(x)^2+Y_\nu(x)^2\}}>0.
\]
Hence \(\delta_\nu'<0\) for \(0<x<\nu\). For \(x>\nu\),
\(\partial_xG_x(\nu)=\sqrt{x^2-\nu^2}/(\pi x)\), while
\(\vartheta_\nu'(x)>\sqrt{x^2-\nu^2}/x\) by
\cite[Eq.~(56)]{Horsley}. Thus \(\delta_\nu'<0\) there as well,
and continuity at \(x=\nu\) shows that \(\delta_\nu\) is strictly
decreasing on \((0,\infty)\).
The phase normalization at zero and the large-\(x\) expansion
\cite[(10.18.18)]{DLMF} give, for each fixed \(\nu\),
\[
\lim_{x\downarrow0}\delta_\nu(x)=\frac12,
\qquad
\lim_{x\to\infty}\delta_\nu(x)=\frac14.
\]
Therefore
\[
0<
\gamma_{\rho,K}(\nu)-G_K(\nu)+G_\mu(\nu)
=\delta_\nu(\mu)-\delta_\nu(K)
<\frac14,
\]
as required.
\end{proof}

The angular orders are \(\nu_{\ell,d}=\ell+(d-2)/2\).
For \(d\ge3\), their multiplicities are
\[
\kappa_{d,\ell}=\frac{(2\ell+d-2)(\ell+d-3)!}{\ell!(d-2)!};
\]
in dimension two, the multiplicities are \(1,2,2,\ldots\).

\begin{proposition}[Spectral reduction]\label{prop:2.1}
For every real \(\nu\ge0\), the number of radial Dirichlet eigenvalues
at or below \(K^2\) is at most \(q(\nu)\). Consequently,
\begin{equation}\label{eq:2.2}
N_D(A^d_{\rho,1},K^2)\le P_d:=
\sum_{\ell\ge0}\kappa_{d,\ell}q(\nu_{\ell,d}).
\end{equation}
\end{proposition}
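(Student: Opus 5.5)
Separation of variables in polar/spherical coordinates reduces the Dirichlet problem on \(A^d_{\rho,1}\) to radial problems, one for each angular order. Writing \(u(x)=|x|^{-(d-2)/2}f(|x|)Y_\ell(x/|x|)\) with \(Y_\ell\) a spherical harmonic of degree \(\ell\), the eigenvalue equation becomes Bessel's equation of order \(\nu=\nu_{\ell,d}\) for \(f\) on \((\rho,1)\), with \(f(\rho)=f(1)=0\). The spherical harmonics of degree \(\ell\) form a space of dimension \(\kappa_{d,\ell}\) (or \(1,2,2,\ldots\) when \(d=2\)). These products are complete, so \(N_D(A^d_{\rho,1},K^2)=\sum_\ell\kappa_{d,\ell}n(\nu_{\ell,d})\), where \(n(\nu)\) is the number of radial eigenvalues \(k^2\le K^2\) of order \(\nu\). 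Hence \eqref{eq:2.2} follows once we prove \(n(\nu)\le q(\nu)\) for every real \(\nu\ge0\). That bound is also the form needed for the magnetic application.

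\emph{Characterizing the radial eigenvalues by the phase.} A radial solution vanishing at \(\rho\) has the form \(f(s)=Y_\nu(k\rho)J_\nu(ks)-J_\nu(k\rho)Y_\nu(ks)\), up to the factor \(\sqrt s\). In terms of the modulus and phase this equals \(M_\nu(k\rho)M_\nu(ks)\sin(\vartheta_\nu(ks)-\vartheta_\nu(k\rho))\). Therefore \(k^2\) is an eigenvalue exactly when \(\vartheta_\nu(k)-\vartheta_\nu(k\rho)\in\pi\mathbb Z\). Since \(\vartheta_\nu\) is strictly increasing, this difference is positive, so \(k^2\) is an eigenvalue exactly when \(\gamma_{\rho,k}(\nu)=n\) for some integer \(n\ge1\). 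Each such \(k\) gives a simple eigenvalue, because the radial ODE with Dirichlet data has a one-dimensional solution space.

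\emph{Monotonicity in \(k\).} Next I show that \(k\mapsto\gamma_{\rho,k}(\nu)\) is strictly increasing. Its derivative is
\[
\frac{\vartheta_\nu'(k)-\rho\,\vartheta_\nu'(\rho k)}{\pi}.
\]
This is positive because \(x\vartheta_\nu'(x)=2/(\pi(J_\nu^2+Y_\nu^2)(x))\), and \(J_\nu^2+Y_\nu^2\) is decreasing in \(x\) (Nicholson's formula). As \(k\downarrow0\), the difference \(\gamma_{\rho,k}(\nu)\) tends to \(0\): for \(\nu>0\) this follows from the asymptotics \(\vartheta_\nu\to-\pi/2\), and for \(\nu=0\) from the slow logarithmic behaviour, or more simply from the upper bound in Lemma~\ref{lem:phase} together with \(\delta_\nu(\rho k)-\delta_\nu(k)\to0\). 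Consequently each integer \(n\ge1\) is attained at most once, and the number of eigenvalues with \(k\le K\) equals \(\#\{n\ge1:n\le\gamma_{\rho,K}(\nu)\}\).

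\emph{Comparison with \(q(\nu)\).} By the upper bound in Lemma~\ref{lem:phase}, any \(n\) with \(n\le\gamma_{\rho,K}(\nu)\) satisfies \(n<A(\nu)+1/4\). The count of integers \(n\ge1\) with \(n<A(\nu)+1/4\) is exactly \(\max(0,\lceil A(\nu)+1/4\rceil-1)=q(\nu)\). This gives \(n(\nu)\le q(\nu)\).

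\emph{Main obstacle.} The delicate step is the monotonicity of \(k\mapsto\gamma_{\rho,k}(\nu)\) for real \(\nu\), since it is what makes each integer correspond to at most one eigenvalue. If citing Nicholson's monotonicity of \(J_\nu^2+Y_\nu^2\) is undesirable, I would instead use a Sturm comparison argument: the \(n\)-th radial eigenvalue is the unique \(k\) whose solution has \(n-1\) interior zeros, and the zeros of \(\sin(\vartheta_\nu(ks)-\vartheta_\nu(k\rho))\) in \(s\) are counted by the phase. On that route, monotonicity of \(\vartheta_\nu\) in \(s\) alone gives the count directly.
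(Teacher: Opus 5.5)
Your proof is correct and follows essentially the same route as the paper: the zeros of the cross product are the levels \(\pi\mathbb Z\) of the phase difference, and Nicholson's formula together with the Wronskian makes \(k\mapsto\vartheta_\nu(k)-\vartheta_\nu(\rho k)\) strictly increasing. The strict upper bound of Lemma~\ref{lem:phase} then gives \(\lfloor\gamma_{\rho,K}(\nu)\rfloor\le q(\nu)\). The one detail you leave implicit is that every radial eigenvalue is positive, so that it has the form \(k^2\) with \(k>0\) and is detected by the Bessel phase; the paper checks this for arbitrary real \(\nu\ge0\), which matters in the magnetic case, via the weighted form \(\int_\rho^1\bigl(s|u'|^2+\nu^2s^{-1}|u|^2\bigr)\,ds>0\).
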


\begin{proof}
Separation of variables gives the eigenfunctions
\(s^{1-d/2}\bigl(c_1J_\nu(ks)+c_2Y_\nu(ks)\bigr)\,\mathcal Y_\ell\),
with \(\mathcal Y_\ell\) a spherical harmonic of degree \(\ell\),
\(\nu=\nu_{\ell,d}\) and eigenvalue \(k^2\). For a radial factor \(f\),
the substitution \(f=s^{-(d-1)/2}v\) turns the radial equation into
\(-v''+(\nu^2-1/4)s^{-2}v=k^2v\), so the radial Dirichlet operator is
\[
L_\nu=-\frac{d^2}{ds^2}+\frac{\nu^2-1/4}{s^2}
\quad\text{on }(\rho,1).
\]
Write \(\lambda_{\nu,n}\), \(n\ge1\), for its Dirichlet
eigenvalues in increasing order.
Writing \(v=\sqrt{s}\,u\), its quadratic form becomes
\[
\int_\rho^1\left(|v'|^2+
       \frac{\nu^2-1/4}{s^2}|v|^2\right)\,ds
=
\int_\rho^1\left(s|u'|^2+
       \frac{\nu^2}{s}|u|^2\right)\,ds>0
\]
for \(0\ne u\in H_0^1(\rho,1)\); thus all radial eigenvalues
are positive.
For \(\nu\ge1/2\), min--max comparison with an interval gives
\begin{equation}\label{eq:2.3}
\lambda_{\nu,n}\ge\frac{n^2\pi^2}{(1-\rho)^2}+\nu^2-\frac14.
\end{equation}

The radial eigenfrequencies of order \(\nu\) are the positive zeros
\(k\) of the cross product
\(J_\nu(k)Y_\nu(\rho k)-Y_\nu(k)J_\nu(\rho k)
=-M_\nu(k)M_\nu(\rho k)\sin(\vartheta_\nu(k)-\vartheta_\nu(\rho k))\).
Nicholson's integral \cite[(10.9.30)]{DLMF},
\[
M_\nu(s)^2=\frac8{\pi^2}\int_0^\infty
K_0(2s\sinh t)\cosh(2\nu t)\,dt,
\]
shows that \(M_\nu(s)^2\) strictly decreases for \(s>0,\nu\ge0\), since the modified Bessel function \(K_0\) is positive and strictly decreasing. The Wronskian therefore gives
\[
\frac{d}{dk}\{\vartheta_\nu(k)-\vartheta_\nu(\rho k)\}
=\frac2{\pi k}\left\{M_\nu(k)^{-2}-M_\nu(\rho k)^{-2}\right\}>0.
\]
The phase difference increases from zero to infinity, so its levels \(\pi,2\pi,\ldots\) index the radial eigenfrequencies. Thus the \emph{inclusive} radial count is
\(\lfloor\gamma_{\rho,K}(\nu)\rfloor\). The strict phase bound
\eqref{eq:2.4} gives
\[
\lfloor\gamma_{\rho,K}(\nu)\rfloor
\le \lceil A(\nu)+1/4\rceil-1=q(\nu),\qquad \nu\ge0.
\]
For \(\nu\ge K\), \(A(\nu)=q(\nu)=0\), so no radial eigenvalue
contributes and all angular sums are finite. The spectral
count is inclusive, while the action count uses the strict bracket;
the strict phase inequality is exactly what makes these conventions
compatible. \end{proof}

\section{Proofs of the main theorems}\label{sec:4}

We first show how tail bounds produce the exact Weyl coefficient
in every dimension. We then apply the independent lattice
translation to magnetic annuli.

\subsection{Angular lifting}

The following lemma is the dimension-reduction argument of
\cite[\S7]{FLPSballs} (Theorem~7.1, inequality~(7.6) and Lemma~7.2
there), written so that it uses only tail bounds. We include the proof
for completeness; the new input for shells is Theorem~\ref{thm:tail}.

\begin{lemma}[Angular lifting]\label{lem:angular}
Let \(A\) be nonnegative, decreasing and absolutely continuous on
\([0,K]\), with \(A(K)=0\), and extend it by zero afterwards.
Suppose \(q\ge0\) vanishes on \([K,\infty)\) and satisfies
\[
q(a)+2\sum_{j\ge1}q(a+j)\le2\int_a^K A(z)\,dz
\qquad(a\ge0).
\]
Then for every integer \(d\ge2\),
\[
\sum_{\ell\ge0}\kappa_{d,\ell}q(\ell+(d-2)/2)
\le\frac2{(d-2)!}\int_0^Kz^{d-2}A(z)\,dz.
\]
The inequality is strict if the assumed tail bound is strict at
\(a=(d-2)/2<K\).
\end{lemma}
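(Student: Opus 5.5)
The plan is to write the weighted angular sum as a nonnegative superposition of the assumed planar tail sums
\[
S(a):=q(a)+2\sum_{j\ge1}q(a+j),
\]
taken at the shifted points \(a_k:=a_0+k\), where \(a_0=(d-2)/2\). Then I integrate the resulting step weight against \(A\) and compare it with \(z^{d-2}/(d-2)!\) using only the monotonicity of \(A\). For \(d=2\) the claim is exactly the hypothesis at \(a=0\), since \(\kappa_{2,\ell}\) are \(1,2,2,\ldots\). So assume \(d\ge3\).

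\emph{Step 1 (combinatorial decomposition).} Put \(c_k=\binom{k+d-3}{d-3}\), the number of monomials of degree \(k\) in \(d-2\) variables. I will check the identity
\[
\kappa_{d,\ell}=c_\ell+2\sum_{k<\ell}c_k .
\]
Both sides start at \(1\) when \(\ell=0\). It then suffices to verify \(\kappa_{d,\ell}-\kappa_{d,\ell-1}=c_\ell+c_{\ell-1}\), which is a routine binomial manipulation; for \(d=3,4\) it gives \(2\ell+1\) and \((\ell+1)^2\). With this identity,
\[
\sum_\ell\kappa_{d,\ell}\,q(a_0+\ell)=\sum_{k\ge0}c_k\,S(a_k).
\]
All sums are finite because \(q\) vanishes on \([K,\infty)\).

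\emph{Step 2 (apply the tail bound).} Since \(c_k\ge0\), the hypothesis gives
\[
\sum_{k}c_kS(a_k)\le 2\sum_k c_k\int_{a_k}^K A=2\int_0^K A(z)\,W(z)\,dz,
\]
where
\[
W(z)=\sum_{k:\,a_0+k\le z}c_k=\binom{\lfloor z-a_0\rfloor+d-2}{d-2}\quad\text{for }z\ge a_0,
\qquad W(z)=0\ \text{for }z<a_0 .
\]
Strictness is inherited when the tail bound is strict at \(a=a_0<K\), because \(c_0=1>0\).

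\emph{Step 3 (from the step weight to the polynomial weight).} Set
\[
\Phi(t)=\int_0^t\Bigl(W(z)-\frac{z^{d-2}}{(d-2)!}\Bigr)dz .
\]
Integration by parts, using \(A(K)=0\) and absolute continuity, gives
\[
\int_0^K A\,\Phi'=-\int_0^K\Phi\,A'.
\]
Since \(A'\le0\), this is \(\le0\) provided \(\Phi\le0\) on \([0,K]\), and that finishes the proof.

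The main obstacle is therefore the purely elementary inequality \(\Phi(t)\le0\) for all \(t\ge0\). It compares the polynomial step function \(\binom{\lfloor z-a_0\rfloor+d-2}{d-2}\) with \(z^{d-2}/(d-2)!\).

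For \(d=3\) the difference is \(n-z\) on \([n-\tfrac12,n+\tfrac12)\), and \(-z\) on \([0,\tfrac12)\). It has zero mean on each cell and is positive only in the first half of the cell, so \(\Phi\le0\) is immediate.

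For general \(d\) I would first try the same cell-by-cell argument. On \([a_0+n,a_0+n+1)\) the step value \(\binom{n+d-2}{d-2}=\frac{(n+1)\cdots(n+d-2)}{(d-2)!}\) should be compared with the cell average of \(z^{d-2}/(d-2)!\). The aim is to show that the initial deficit on \([0,a_0+1)\) dominates any cumulative excess, for example by showing that \(\Phi\) at cell endpoints is nonpositive and that within a cell the extremum is controlled by the endpoints. A fallback is induction on \(d\): write \(W_d\) as the integer-step antiderivative of \(W_{d-1}\) with the half-shift \(a_0\mapsto a_0+\tfrac12\), and propagate \(\Phi\le0\) through one integration. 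This mirrors \cite[Lemma~7.2]{FLPSballs}, which I would follow if the direct estimate becomes messy.
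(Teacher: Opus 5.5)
Your Steps 1--3 are the paper's reduction. Your \(c_k\) are the coefficients of \((1-u)^{-(d-2)}\), and your identity for \(\kappa_{d,\ell}\) is the paper's. Your step weight \(W\) is the paper's \(S\), and your integration by parts is \eqref{eq:4.3}. The gap is that the only nontrivial content, \(\Phi\le0\) (equivalently \(F(t):=\int_0^tW\le t^{d-1}/(d-1)!\)), is left unproved for \(d\ge4\). Moreover, the strategy you propose first cannot work as stated. On each cell \([a_0+n,a_0+n+1)\) the weight \(W\) is constant while \(z^{d-2}\) increases, so \(\Phi'\) decreases and \(\Phi\) is concave there. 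Its maximum on a cell is therefore interior, and concavity makes the endpoint values bound \(\Phi\) from below, not from above. Nonpositivity at the cell endpoints thus says nothing about the interior. Your own \(d=3\) computation shows this: \(\Phi=-1/8\) at the cell endpoints and \(\Phi=0\) at the midpoints, so there is not even slack to exploit.

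The missing idea is to compute \(F\) exactly. Put \(p=d-2\) and \(t=a_0+L+\tau\) with \(L\in\mathbb N_0\) and \(0\le\tau<1\). The hockey-stick identity gives \(F(t)=\binom{L+p}{p+1}+\tau\binom{L+p}{p}\), that is,
\[
(p+1)!\,F(t)=\bigl(L+(p+1)\tau\bigr)\prod_{i=1}^p(L+i),
\]
while \(F=0\) for \(t<a_0\). These \(p+1\) nonnegative factors have arithmetic mean \(L+\tau+p/2=t\), precisely because \(a_0=p/2\). AM--GM then gives \(F(t)\le t^{p+1}/(p+1)!\) for every \(\tau\) at once; this is \eqref{eq:4.2}.

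Your inductive fallback can also be completed, but not by propagating \(\Phi_{d-1}\le0\) alone. Use \(c^{(d)}_k=\sum_{j\le k}c^{(d-1)}_j\) to write \(F_d(t)=\sum_jc^{(d-1)}_j\,g(t-a_0-j)\), where \(g(s)=\sum_{i\ge0}(s-i)_+\). The two-factor AM--GM bound \(g(s)\le\frac12(s+\frac12)_+^2\) then yields \(F_d(t)\le\int_0^tF_{d-1}\). The claim follows by induction from \(F_2(t)=t\).
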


\begin{proof}
Let \(p=d-2\ge0\) and \(b=p/2\), and denote the sum by \(P_d\).
Define the nonnegative coefficients \(c_m\) by
\[
(1-u)^{-p}=\sum_{m\ge0}c_m u^m.
\]
For \(p=0\), only \(c_0=1\) is nonzero. In every dimension the angular multiplicities satisfy
\[
\kappa_{d,\ell}=c_\ell+2\sum_{m<\ell}c_m.
\]
The assumed tail bound therefore gives
\begin{equation}\label{eq:4.1}
P_d\le2\sum_{m\ge0}c_m\int_{b+m}^K A(z)\,dz
=2\int_0^K A(z)S(z)\,dz,\qquad
S(z)=\sum_{b+m\le z}c_m.
\end{equation}
Put \(F(z)=\int_0^z S=\sum_m c_m(z-b-m)_+\). It vanishes for \(z<b\).
For \(z=b+L+\tau\), where \(L\in\mathbb N_0\) and \(0\le\tau<1\), binomial summation gives
\begin{equation}\label{eq:4.2}
(p+1)!F(z)=\bigl(L+(p+1)\tau\bigr)\prod_{i=1}^p(L+i)
\le z^{p+1}.
\end{equation}
The inequality follows from the arithmetic--geometric mean
inequality, since the \(p+1\) factors have arithmetic mean \(z\).
The empty product makes the formula valid for \(p=0\).
With \(\sigma=-A'\), integrate by parts, apply
\eqref{eq:4.2}, and integrate by parts again to obtain
\begin{equation}\label{eq:4.3}
\int_0^K AS=\int_0^K\sigma F
\le\frac1{p!}\int_0^Kz^pA(z)\,dz.
\end{equation}
This proves the lemma; strictness follows from the \(m=0\) term,
whose coefficient is \(c_0=1\).
\end{proof}

\subsection{Spherical shells}\label{sec:shell-proof}

\begin{proof}[Proof of Theorem~\ref{thm:shell}]
First suppose \(T>1\). Theorem~\ref{thm:tail}, with \(\alpha=0\),
and Lemma~\ref{lem:angular} apply to the shell action.
Fubini's theorem, followed by the beta integral and gamma duplication
formula, gives
\begin{equation}\label{eq:4.4}
\frac2{(d-2)!}\int_0^Kz^{d-2}A(z)\,dz
=\frac{(K^d-\mu^d)B((d-1)/2,3/2)}{\pi d(d-2)!}
=\frac{K^d-\mu^d}{2^d\Gamma(d/2+1)^2}.
\end{equation}
Put \(b=(d-2)/2\). If \(K>b\), the \(m=0\) tail in \eqref{eq:4.1} is strictly
bounded, so \eqref{eq:2.2} gives the strict inequality for \(T>1\).
If \(K\le b\), the count is zero and the Weyl term is positive.

For \(T<1\), \eqref{eq:2.3} excludes every order
\(\nu\ge1/2\). At \(T=1\), the only possible equality in that
comparison is the first radial eigenvalue of order \(\nu=1/2\).
For \(d\ge4\), no order contributes when \(T\le1\). For \(d=3\),
only \(T=1\) needs attention: the order \(1/2\) has multiplicity one,
whereas
\[
\frac{2(K^3-\mu^3)}{9\pi}>\frac{2\pi^2}{9}>1.
\]
In dimension two, only the zero order may contribute, and
Proposition~\ref{prop:2.1} gives
\[
N_D(A^2_{\rho,1},K^2)\le q(0)=[T+1/4]_<\le1.
\]
The count is zero for \(T\le3/4\). For \(3/4<T\le1\),
\[
\frac{K^2-\mu^2}{4}\ge\frac{\pi^2T^2}{4}>
\frac{9\pi^2}{64}>1.
\]
These arguments include all spectral thresholds. Scaling proves
\eqref{eq:1.1}; evaluation at an eigenvalue proves the strict
individual inequality, including multiplicities.
\end{proof}

\subsection{Magnetic annuli}\label{sec:magnetic}

\begin{proof}[Proof of Theorem~\ref{thm:magnetic}]
After scaling to outer radius one, separation in the modes
\(e^{im\varphi}\) gives the real orders \(|m-\alpha|\),
\(m\in\mathbb Z\). Proposition~\ref{prop:2.1} and
Theorem~\ref{thm:tail}, with \(a=0\) and translation \(-\alpha\), give for \(T>1\)
\[
N_\alpha(A^2_{\rho,1},K^2)
\le\sum_{m\in\mathbb Z}q(|m-\alpha|)
<2\int_0^K A(z)\,dz=\frac{K^2-\mu^2}{4}.
\]

For \(T\le1\), \eqref{eq:2.3} excludes every order \(\nu>1/2\).
Reduce the flux to
\(\delta=\operatorname{dist}(\alpha,\mathbb Z)\in[0,1/2]\).
If \(\delta<1/2\), the only possible contributing order is
\(\delta\), and its count is at most
\(q(\delta)\le q(0)\le1\). A contribution requires
\(T>3/4\), so the planar low-frequency estimate in
Section~\ref{sec:shell-proof} applies. If \(\delta=1/2\), there is no
contribution for \(T<1\). At \(T=1\), the operator \(L_{1/2}=-d^2/ds^2\)
has its first eigenvalue at \(K^2=\pi^2/(1-\rho)^2\).
The two angular modes of order \(1/2\) therefore contribute
an inclusive count of \(2\), whereas
\[
\frac{K^2-\mu^2}{4}>\frac{\pi^2}{4}>2.
\]
Scaling and evaluation at spectral thresholds complete the proof.
\end{proof}

\begin{remark}
The conclusion also holds for every curl-free vector potential smooth
on the closed annulus. Subtracting \(\alpha\mathbf A_0\), with the same
circulation, leaves the gradient of a single-valued smooth function;
multiplication by its phase is a unitary gauge transformation.
In particular, an Aharonov--Bohm pole may lie anywhere strictly
inside the removed inner disk.
\end{remark}

\section{Inverse action and cell quadrature}\label{sec:3}

For \(a\ge0\) and \(\alpha\in\mathbb R\), define
\begin{equation}\label{eq:3.1}
D_A(a,\alpha)=2\int_a^K A(z)\,dz
-\sum_{m\in\mathbb Z}q(a+|m+\alpha|),
\end{equation}
where the integral is zero for \(a\ge K\). Theorem~\ref{thm:tail} asserts that this deficit is nonnegative,
and positive when \(a<K\). We reduce this assertion to a
quadrature estimate that does not depend on \(\alpha\).

The restriction of \(A\) to \([0,K]\) is strictly decreasing,
with range \([0,T]\). Let \(x=A^{-1}\) and \(c=A(\mu)\).
Then \(x(0)=K\), \(x(T)=0\), and \(x(c)=\mu\).
Extend \(x\) by zero for \(t>T\).
We call \(0<t<c\) the \textbf{outer branch}, where \(x(t)>\mu\),
and \(c<t<T\) the \textbf{inner branch}, where \(x(t)<\mu\).
The branches meet at \(t=c\), which we call their interface.
Each unit cell \([j-1,j]\), \(j\ge1\), is sampled at
\(j-1/4\); see Figure~\ref{fig:inverse}.

\begin{figure}[ht]
\centering
\begin{tikzpicture}[x=2.25cm,y=4.2cm,>=stealth,font=\small]
\fill[gray!10] (2,0) rectangle (3,1.03);
\draw[->] (0,0)--(3.92,0) node[right] {\(t\)};
\draw[->] (0,0)--(0,1.12) node[above] {\(x(t)/K\)};
\foreach \v in {1,2,3} {
  \draw[gray!45] (\v,0)--(\v,1.03);
  \node[below] at (\v,0) {\(\v\)};
}
\draw[densely dashed,gray] (2.84372,0)--(2.84372,0.31250);
\node[below,yshift=-12pt] at (2.84372,0) {\(c\)};
\node[below] at (3.50141,0) {\(T\)};
\node[left] at (0,1) {\(1\)};
\draw[thick,blue!65!black] plot coordinates {(0.00000,1.00000) (0.00589,0.98854) (0.01668,0.97708) (0.03066,0.96562) (0.04722,0.95417) (0.06604,0.94271) (0.08686,0.93125) (0.10952,0.91979) (0.13388,0.90833) (0.15985,0.89687) (0.18733,0.88542) (0.21625,0.87396) (0.24654,0.86250) (0.27816,0.85104) (0.31105,0.83958) (0.34517,0.82812) (0.38049,0.81667) (0.41696,0.80521) (0.45456,0.79375) (0.49327,0.78229) (0.53304,0.77083) (0.57387,0.75938) (0.61572,0.74792) (0.65858,0.73646) (0.70243,0.72500) (0.74725,0.71354) (0.79302,0.70208) (0.83974,0.69063) (0.88737,0.67917) (0.93592,0.66771) (0.98537,0.65625) (1.03570,0.64479) (1.08691,0.63333) (1.13899,0.62187) (1.19191,0.61042) (1.24568,0.59896) (1.30029,0.58750) (1.35572,0.57604) (1.41197,0.56458) (1.46903,0.55312) (1.52690,0.54167) (1.58556,0.53021) (1.64500,0.51875) (1.70523,0.50729) (1.76624,0.49583) (1.82801,0.48438) (1.89055,0.47292) (1.95385,0.46146) (2.01790,0.45000) (2.08271,0.43854) (2.14825,0.42708) (2.21454,0.41563) (2.28156,0.40417) (2.34931,0.39271) (2.41779,0.38125) (2.48699,0.36979) (2.55691,0.35833) (2.62755,0.34688) (2.69890,0.33542) (2.77096,0.32396) (2.84372,0.31250) (2.88319,0.30556) (2.91878,0.29861) (2.95191,0.29167) (2.98310,0.28472) (3.01263,0.27778) (3.04069,0.27083) (3.06743,0.26389) (3.09296,0.25694) (3.11736,0.25000) (3.14070,0.24306) (3.16304,0.23611) (3.18444,0.22917) (3.20493,0.22222) (3.22456,0.21528) (3.24335,0.20833) (3.26134,0.20139) (3.27855,0.19444) (3.29501,0.18750) (3.31074,0.18056) (3.32575,0.17361) (3.34006,0.16667) (3.35370,0.15972) (3.36667,0.15278) (3.37899,0.14583) (3.39066,0.13889) (3.40171,0.13194) (3.41214,0.12500) (3.42196,0.11806) (3.43118,0.11111) (3.43980,0.10417) (3.44783,0.09722) (3.45529,0.09028) (3.46217,0.08333) (3.46848,0.07639) (3.47423,0.06944) (3.47942,0.06250) (3.48405,0.05556) (3.48813,0.04861) (3.49166,0.04167) (3.49464,0.03472) (3.49708,0.02778) (3.49898,0.02083) (3.50033,0.01389) (3.50114,0.00694) (3.50141,0.00000)};
\fill[blue!65!black] (0.75000,0.71285) circle (1.6pt);
\fill[blue!65!black] (1.75000,0.49887) circle (1.6pt);
\fill[blue!65!black] (2.75000,0.32728) circle (1.6pt);
\node at (.74,.93) {outer branch};
\node at (3.28,.49) {inner branch};
\node[align=center] at (2.5,.89) {crossing\\cell};
\end{tikzpicture}
\caption{The inverse action for \(K=16\), \(\mu=5\). Dots mark the
included nodes \(j-1/4\). The outer branch is convex, the inner branch
is concave, and only one unit cell contains \(c\) in its interior. The plot
illustrates the geometry; all cell estimates are proved analytically.}
\label{fig:inverse}
\end{figure}

\begin{lemma}[Counting and quadrature]\label{lem:3.2}
Let \(a\ge0\) and \(\alpha\in\mathbb R\). If \(n=q(a)\ge1\), then
\begin{equation}\label{eq:3.2}
D_A(a,\alpha)>Q_n,\qquad
Q_n:=2\int_0^n x(t)\,dt
-2\sum_{j=1}^n x(j-1/4)-n.
\end{equation}
The same statement holds with \(G_K\) in place of \(A\),
using the corresponding inverse and counting functions.
\end{lemma}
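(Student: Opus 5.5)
The plan is to rewrite both the lattice sum and the integral in \eqref{eq:3.1} in terms of the inverse \(x=A^{-1}\), sliced horizontally at the heights \(t\). Then \(a\) and \(\alpha\) enter only through the elementary bounds below, and the comparison is a one-line computation.

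\emph{Step 1: the lattice sum as a sum over nodes.} Fix \(z\ge0\). By \eqref{eq:2.1}, \(q(z)\) is the number of integers \(j\ge1\) with \(j-1/4<A(z)\). Since \(A\) is strictly decreasing on \([0,K]\) and vanishes beyond \(K\), the condition \(A(z)>j-1/4\) is equivalent to \(z<x_j:=x(j-1/4)\). This uses the extension of \(x\) by zero for \(t>T\), so that \(x_j=0\) when \(j-1/4\ge T\). Hence
\[
\sum_{m\in\mathbb Z}q(a+|m+\alpha|)=\sum_{j\ge1}\#\{m\in\mathbb Z:\ |m+\alpha|<x_j-a\}.
\]
The summand vanishes unless \(x_j>a\). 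Since \(x\) is decreasing, the same equivalence with \(z=a\) shows that this happens exactly for \(j\le q(a)=n\).

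For such \(j\), the admissible \(m\) form the integers in an open interval of length \(2(x_j-a)\). An open interval of length \(L>0\) contains at most \(\lceil L\rceil<L+1\) integers, which gives the \emph{strict} bound
\[
\sum_{m}q(a+|m+\alpha|)<\sum_{j=1}^n\bigl(2(x_j-a)+1\bigr)=2\sum_{j=1}^n x(j-1/4)-2na+n .
\]
This bound is uniform in \(\alpha\). Strictness holds because \(n\ge1\), so the sum contains at least one term.

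\emph{Step 2: the integral by the layer-cake formula.} For \(a<K\) (which follows from \(n\ge1\)), Fubini over the region \(\{(z,t):a<z,\ 0<t<A(z)\}\) gives
\[
\int_a^K A(z)\,dz=\int_0^{A(a)}\bigl(x(t)-a\bigr)\,dt=\int_0^\infty\bigl(x(t)-a\bigr)_+\,dt .
\]
For \(t>A(a)\) we have \(x(t)<a\) (including \(t>T\), where \(x=0\le a\)). Therefore \((x-a)_+\ge x-a\) everywhere, and \(\int_0^\infty(x-a)_+\ge\int_0^n(x(t)-a)\,dt\). This holds whether \(A(a)\) is above or below \(n\); note only that \(n-1/4<A(a)\le n+3/4\). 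Hence
\[
2\int_a^K A\ge2\int_0^n x(t)\,dt-2na .
\]

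\emph{Step 3: combining.} Subtracting the bound of Step 1 from that of Step 2, the terms \(\pm2na\) cancel and leave \(D_A(a,\alpha)>Q_n\).

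The argument uses only that the action is continuous and strictly decreasing from its value at \(0\) to zero at \(K\), and vanishes afterwards. It therefore transfers verbatim to \(G_K\), with its inverse and its counting function \([G_K+1/4]_<\).

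The only delicate point is bookkeeping rather than analysis. One must check the equivalence \(q(z)\ge j\iff z<x_j\) at the endpoints, namely at \(t\ge T\) and at \(z\ge K\). One must also make sure the strict inequality comes from the integer count in Step 1 and not from the integral. I do not expect a genuine obstacle here. The substantive work, the positivity of \(Q_n\), is deferred to the cell estimates.
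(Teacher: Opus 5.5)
Your proof is correct and follows essentially the same route as the paper's: you rewrite the lattice sum level by level at the nodes \(j-1/4\), and bound each level \(j\le n\) strictly by \(2(x(j-1/4)-a)+1\) using the open-interval integer count. You then compare with the layer-cake identity \(\int_a^K A=\int_0^\infty(x-a)_+\ge\int_0^n(x-a)\). Your endpoint bookkeeping (the zero extension of \(x\) and the equivalence \(q(z)\ge j\iff z<x(j-1/4)\)) is somewhat more explicit than the paper's.
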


\Needspace{6\baselineskip}
\begin{proof}
Write \(t_j=j-1/4\).
Since \(q\) is decreasing, only the levels \(1\le j\le n=q(a)\)
can contribute. At each such level, \(t_j<A(a)\), hence
\(x(t_j)>a\).
An open interval of length \(2r\) contains fewer than \(2r+1\)
integers, including when it contains none.
Applying this with \(r=x(t_j)-a>0\) gives
\[
\begin{aligned}
\sum_{m\in\mathbb Z}q(a+|m+\alpha|)
&=\sum_{j=1}^n\#\{m\in\mathbb Z:|m+\alpha|<x(t_j)-a\}\\
&<2\sum_{j=1}^n(x(t_j)-a)+n.
\end{aligned}
\]
The number of levels \(n=q(a)\) does not depend on \(\alpha\).
The layer-cake identity gives
\begin{equation}\label{eq:3.3}
\int_a^K A(z)\,dz
=\int_0^\infty(x(t)-a)_+\,dt
\ge\int_0^n(x(t)-a)\,dt.
\end{equation}
Combining these formulas proves the claim. The zero extension makes
the argument unchanged when \(n>T\). Only monotonicity and inversion
of the action are used, so the proof also applies to \(G_K\).
\end{proof}

Thus the lattice count is controlled by the error in sampling \(x\) at the three-quarter point of each unit cell. The following two estimates complete the proof.

\begin{proposition}[Positivity of the quadrature error]\label{prop:3.3}
Suppose \(T>1\) and \(\mu\ge1\). Then \(Q_n>0\) for every integer \(n\) with \(1\le n<T+1/4\).
\end{proposition}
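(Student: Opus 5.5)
We write \(Q_n=\sum_{j=1}^n E_j\) with cell errors
\[
E_j=2\int_{j-1}^{j}x(t)\,dt-2x(j-\tfrac14)-1 ,
\]
and aim to show that the first cell is positive, every cell lying in a single branch is positive, and the unique crossing cell (the one containing \(c\) in its interior) is compensated by its neighbours. On cells with \(j>T\) we have \(x\equiv0\) beyond \(T\). Since \(n<T+1/4\), such a last cell still has \(x(j-1/4)=0\) and contributes \(2\int_{j-1}^{T}x-1\). We therefore keep the last (partial) cell as a separate case.

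\textbf{Step 1 (outer branch: convexity margin).} On \(0<t<c\) the inverse \(x\) is convex and decreasing. Convexity yields the tangent bound \(x(t)\ge x(t_j)+x'(t_j)(t-t_j)\), so
\[
\int_{j-1}^j x\ge x(t_j)+\tfrac14 x'(t_j)\cdot(\text{sign-adjusted}) ,
\]
which is not enough by itself because of the \(-1\). The real margin comes from the slope. From \(A'(z)=-\frac1\pi\int_{\max(z,\mu)}^K\frac{z\,ds}{s\sqrt{s^2-z^2}}\) one gets \(-x'(t)=1/|A'(x)|\ge2\) (the ball action is \(1/2\)-Lipschitz, and \(|A'|\le|G_K'|\le1/2\)). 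I would then use the ``inverse-slope shape'' lemma promised in the Appendix. The idea is to write \(E_j=\int_{j-1}^{j}2(x(t)-x(t_j))\,dt-1\) and express it through \(\sigma=-x'\) as
\[
E_j=2\int_{j-1}^{t_j}(t-j+1)\sigma\,dt-2\int_{t_j}^{j}(j-t)\sigma\,dt-1 .
\]
The left weight has mass \(9/32\) and the right weight mass \(1/32\). With \(\sigma\ge2\) and \(\sigma\) decreasing (convexity), we get \(E_j\ge2\sigma(t_j)(9/32-1/32)-1=\sigma(t_j)/2-1\ge0\). Strictness follows from \(|A'|<1/2\) strictly on the interior.

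\textbf{Step 2 (inner branch).} For \(c<t<T\), \(x\) is concave and \(\sigma\) increases, so the weighted moment argument above fails. Here \(x<\mu\), and \(A'(z)=-\frac1\pi\int_\mu^K\frac{z\,ds}{s\sqrt{s^2-z^2}}\) is small when \(z\) is small. Hence \(\sigma=1/|A'|\) is large, roughly \(\pi/(z\log(K/\mu))\). I would bound the growth ratio \(\sigma(t)/\sigma(t_j)\) on a unit cell using \(\mu\ge1\). The aim is to show that the negative right-hand moment \(2\int_{t_j}^j(j-t)\sigma\) stays below \(\tfrac12\sigma(t_j)\)-type quantities minus one. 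The hypothesis \(\mu\ge1\) enters precisely here, to control curvature of \(A\) near \(z=0\) (where \(A\) is concave with \(A'(0)=0\), so \(\sigma\to\infty\) as \(t\to T\)). The last partial cell near \(T\) uses \(x(t)\approx\sqrt{2(T-t)/|A''(0)|}\) explicitly.

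\textbf{Step 3 (first and crossing cells).} The first cell \([0,1]\) starts at \(x=K\), where \(\sigma(0)=\infty\) formally (\(A'(K)=0\) for the outer action). The convex estimate of Step 1 applies there and gives a large positive margin. For the crossing cell, \(\sigma\) first decreases (outer branch) and then increases. I would split the moment integral at \(c\), using the lower bound \(\sigma\ge2\) on both sides and an upper bound on \(\sigma\) on \([t_j,j]\) from the inner-branch estimate. Any shortfall would be absorbed by the surplus \(\sigma(t_{j-1})/2-1\) of the previous outer cell, or of the first cell if \(j=1\).

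I expect the crossing cell, and the inner-branch cells when \(\mu\) is close to \(1\), to be the main obstacle, since sign control there requires quantitative two-sided bounds on \(A'\) across \(z=\mu\) rather than mere convexity.
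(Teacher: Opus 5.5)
Your Step~1 is sound for complete outer cells. With \(E_j=\int_{j-1}^{j}x-x(j-\tfrac14)\) as in the paper (your cell term is \(2E_j-1\)), the signed kernel, the decrease of \(w=-x'\) (your \(\sigma\)) and \(w\ge w_{\min}=\pi/\theta>2\) give the paper's regular bound \(E_j>w_{\min}/4\). Beyond that the outline states intentions rather than estimates, and several of them fail.

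\textbf{The last cell and inner cells.} Your description of the last cell is wrong. Since \(n<T+\tfrac14\), the node \(n-\tfrac14\) lies \emph{below} \(T\), so \(x(n-\tfrac14)>0\). That node sits within \(\tfrac14\) of the point where \(w\sim P/\sqrt{T-t}\), \(P=\sqrt{\mu K/(2T)}\), blows up. (Your heuristic \(\pi/(z\log(K/\mu))\) should read \(\mu K/(Tz)\).) No bound on the ratio \(w(t)/w(t_j)\) over a cell controls such a cell, or a complete inner cell ending at \(T\). The missing ingredient is the shape Lemma~\ref{lem:5.1}: \(H(t)=\sqrt{T-t}\,w(t)\) is increasing and concave on \([c,T]\), proved by the moment inequality of Appendix~\ref{app:shape}. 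Combined with the kernel \(k(s)/\sqrt{l-s}\) and the bounds \(I_0(l)>1/(4\sqrt l)\), \(I_1(l)>0\), it yields \(E_j>w_{\min}/4\) for every included inner cell, complete or terminal. This needs no condition on \(\mu\); the hypothesis \(\mu\ge1\) is used only in the crossing-cell bounds.

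\textbf{The first cell.} Step~3 presumes the first cell is outer, but \(\mu\ge1\) does not give \(c\ge1\). For fixed \(T\) and \(\mu\to\infty\), \(c\approx(2\pi T)^{3/2}/(3\pi\sqrt\mu)\to0\). Then the interface lies in the first cell and no earlier cell exists to absorb a loss. A separate argument is needed, namely Lemma~\ref{lem:5.8}. For \(c\le\tfrac34\) it uses the Cauchy--Schwarz bound \(x(t)\le\sqrt{\mu K}\sqrt{1-t^2/T^2}\) for \(t\ge c\), the exact area \(\int_0^Tx=(K^2-\mu^2)/8\), and a one-variable minimisation. For \(\tfrac34<c<1\) it uses the inverse-drop bound of Lemma~\ref{lem:5.4} or the endpoint bound.

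\textbf{The crossing cell.} Its deficit cannot be absorbed by the preceding outer cell. When \(\theta\) is close to \(\pi/2\), that cell contributes only \(\eta+O(1/K)\) to \(Q_n\), with \(\eta=\pi/(2\theta)-1\to0\), while the crossing deficit need not vanish. Take \(\theta\to\pi/2\) with \(T\) an integer and \(y=T-c=\tfrac14\), so that \(\mu\to\pi/(2\pi-4)\approx1.38\) and the node \(T-\tfrac14\) equals \(c\). Lemma~\ref{lem:5.1} gives \(w(c+v)\ge w_{\min}\sqrt{y/(y-v)}\), while \(w=w_{\min}+O(1/K)\) on the outer part. Hence \(E\le 23w_{\min}/96+O(1/K)\), and the crossing cell contributes at most about \(-1/24\).

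The paper instead keeps global books:
\begin{itemize}
\item \(E_1>w_{\min}/3\) (from the monotonicity of \(t^{1/3}w(t)\)) and \(E_j>w_{\min}/4\) on the other regular cells give \eqref{eq:6.1}, with the pooled margin \(q_*=n(\pi/2-\theta)/\pi\).
\item The crossing cell satisfies \(E>w_{\min}/8\) when it is complete and \(\theta\le\pi/3\) (Lemmas~\ref{lem:5.4} and~\ref{lem:5.5}).
\item Otherwise \(E/w_{\min}\ge 9/32-C/(6\sqrt2)\) with \(C=P/w_{\min}\) (Lemmas~\ref{lem:5.5} and~\ref{lem:5.7}).
\item Finally \(K<4\pi n\) gives \(C^2<16q_*\), and completing the square gives \(Q_n>0\).
\end{itemize}
Without a quantitative crossing-cell bound and this pooling of all \(n\) margins, your outline does not prove the proposition.
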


\begin{proposition}[Small inner radius]\label{prop:3.4}
Suppose \(T>1\) and \(0<\mu\le1\). If \(a\ge0\) and
\(q(a)\ge1\), then \(D_A(a,\alpha)>0\) for every
\(\alpha\in\mathbb R\).
\end{proposition}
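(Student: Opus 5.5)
The plan is to compare the shell with the ball of radius \(K\), exploiting that the inner correction \(G_\mu\) is tiny when \(\mu\le1\). The key facts are:
\begin{itemize}
\item \(G_\mu\) vanishes on \([\mu,\infty)\), so \(A=G_K\) there.
\item \(0\le G_\mu\le\mu/\pi\le1/\pi\).
\item \(\int_0^\mu G_\mu(z)\,dz=\mu^2/8\le1/8\); this is the case \(d=2\) of \eqref{eq:4.4} with inner radius zero.
\end{itemize}
Throughout I use Lemma~\ref{lem:3.2} in both of its forms, for \(A\) and for \(G_K\). I also use the ball lattice inequality of \cite[Theorem~5.1]{FLPSballs}/\cite[Theorem~2.3]{FLPSmagnetic}, in its quadrature form: the ball error \(Q_n^{G}\) built from \(x_G=G_K^{-1}\) is positive. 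I would extract that positivity from the convexity and \(1/2\)-Lipschitz property of \(G_K\), via the cell-by-cell argument alluded to in Section~\ref{sec:3} for a single convex branch.

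\textbf{Case \(a\ge\mu\).} Here \(A(a+|m+\alpha|)=G_K(a+|m+\alpha|)\) for every \(m\), so the two lattice sums coincide, and so do the tail integrals \(\int_a^K A=\int_a^K G_K\). Hence \(D_A(a,\alpha)=D_{G_K}(a,\alpha)\). Lemma~\ref{lem:3.2} for \(G_K\) then gives \(D_{G_K}(a,\alpha)>Q_n^{G}\ge0\) with \(n=q(a)\ge1\). This case is therefore just the ball theorem.

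\textbf{Case \(0\le a<\mu\le1\).} Put \(n=q(a)\) and \(h=x_G-x_A\ge0\), where \(x_A=A^{-1}\) and \(x_G=G_K^{-1}\), both extended by zero. Since \(A\le G_K\) with equality on \([\mu,K]\), the function \(h\) vanishes for \(t\le c=A(\mu)\). Moreover \(0\le h\le\mu\), and \(\int_0^\infty h=\int_0^\infty G_\mu=\mu^2/8\). Writing
\[
Q_n^A=Q_n^G-2\int_0^n h(t)\,dt+2\sum_{j=1}^n h(j-1/4),
\]
the last sum is nonnegative, so \(Q_n^A\ge Q_n^G-\mu^2/4\ge Q_n^G-1/4\).

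It therefore suffices to show that the ball quadrature error has margin at least \(1/4\) in the relevant range of \(n\). Failing that, I would recover the missing amount from two slack terms in Lemma~\ref{lem:3.2} that were discarded:
\begin{itemize}
\item the dropped layer-cake remainder \(2\int_n^\infty (x_A-a)_+\,dt\);
\item the gap in the integer count at the top level, where the interval \(|m+\alpha|<x(t_n)-a\) has length at most about \(2\mu\le2\).
\end{itemize}

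\textbf{Main obstacle.} The hard step is the quantitative margin for \(Q_n^G\). It holds uniformly in \(n\) with \(n<G_K(a)+1/4\) and \(T>1\), though not necessarily for the sub-sum that matters. My first attempt is to localize. The defect \(h\) lives in \(t\in[c,T_G]\), an interval of length at most \(G_K(0)-G_K(\mu)+\mu/\pi\le\mu/2+\mu/\pi<1\), by the \(1/2\)-Lipschitz bound. So only the last one or two unit cells are affected. In those cells I would estimate the convex-cell margin of \(x_G\) near its endpoint \(x_G(T_G)=0\) explicitly, using \(x_G'\to-\infty\) at \(t=T_G=K/\pi\) with square-root behaviour. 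I would also use that the sample \(h(j-1/4)\) is comparable to \(\max h\) when the cell contains the support of \(h\), which partially cancels the \(\mu^2/4\) loss. If this cell analysis is too delicate for \(n\) small, i.e. \(T\) close to \(1\), I would treat \(n=1,2\) by a direct evaluation of the few terms of \(\sum_m q(a+|m+\alpha|)\) against \(2\int_a^K A\ge 2\int_\mu^K G_K\). This is feasible because \(T>1\) forces \(K>\pi\) while \(\mu\le1\).
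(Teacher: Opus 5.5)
Your reduction is sound and is essentially the paper's. Writing \(h=x_G-x_A\) gives \(Q_n^A\ge Q_n^G-\mu^2/4\). The paper makes the same comparison one level up, \(D_A(a,\alpha)\ge D_K(a,\alpha)-2\int_a^\infty G_\mu\), using \(q\le q_K\). Either way, everything hinges on a ball margin of at least \(\mu^2/4\). You leave exactly this margin unproved, and the route you sketch for it cannot work.

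The claim that \(x_G'\to-\infty\) at \(t=K/\pi\) with square-root behaviour is false. The point \(t=K/\pi\) corresponds to \(z=0\), where \(G_K'(0)=-1/2\), so \(x_G'(K/\pi)=-2\). The square-root endpoint singularity belongs to the shell inverse (Lemma~\ref{lem:5.1}, where \(\sigma(0)=0\)), not to the ball. Near its endpoint the ball inverse is almost affine with slope \(-2\). The cell margins \(2E_j^G-1\) there are of order \(1/K\), so they cannot absorb a loss of size \(\mu^2/4\) uniformly in \(K\). Nor can you count on the samples \(h(j-1/4)\) to compensate: the support \([c,K/\pi]\) of \(h\) has length at most \(\mu/2\le1/2\) and may contain no node. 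The slack terms and the direct evaluation for \(n=1,2\) are too vague to close the gap.

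The missing idea is that the margin sits in the first cell. There, at \(t=0\), \(z=K\), the ball inverse has a vertical tangent. Since \(K>\pi T>\pi\), the ball's interface height is \(K/\pi>1\). The first-cell bound of Lemma~\ref{lem:5.3}, via the decreasing function \(t^{1/3}w(t)\), then gives \(E_1^G>2/3\), that is, \(Q_1^G>1/3\).

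Every later included cell contributes positively. The zero-extended ball inverse is convex, and \(-x_G'>2\) at every node \(j-1/4<A(a)\le G_K(a)\le K/\pi\). The supporting tangent therefore gives \(E_j^G\ge-\tfrac14x_G'(j-1/4)>1/2\). Hence \(Q_n^G\ge Q_1^G>1/3>\mu^2/4\) for every \(1\le n\le q(a)\), and your inequality yields \(Q_n^A>1/12\), so \(D_A(a,\alpha)>Q_n^A>0\).

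This argument treats \(a<\mu\) and \(a\ge\mu\) together. Your separate case \(a\ge\mu\) is correct but unnecessary, and so are the discarded slack terms of Lemma~\ref{lem:3.2}.
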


Propositions~\ref{prop:3.3} and~\ref{prop:3.4} are proved in
Section~\ref{sec:6} using the estimates of Section~\ref{sec:5}.
Intervals contained in one branch contribute positively.
If a later interval contains \(c\) in its interior, its possible
loss is controlled by the positive contribution from the first
interval. For \(\mu\le1\), comparison with the ball suffices.

\begin{proof}[Proof of Theorem~\ref{thm:tail}]
If \(q(a)=0\), monotonicity makes every translated count vanish,
and the integral in \eqref{eq:3.1} is nonnegative, strictly positive
for \(a<K\). Otherwise \(n=q(a)<T+1/4\), and
Lemma~\ref{lem:3.2} together with Propositions~\ref{prop:3.3}
and~\ref{prop:3.4} gives the result.
\end{proof}

\section{Geometry and quadrature estimates}\label{sec:5}

\subsection{Parameters and inverse geometry}\label{sec:5.1}

We use the following notation:
\[
\begin{array}{c|l}
K,\mu&\text{scaled outer and inner radii}\\
T=(K-\mu)/\pi&\text{height of the action curve}\\
c=A(\mu),\quad y=T-c&\text{interface height and inner-branch length}\\
x=A^{-1},\quad \sigma=-A',\quad w=-x'&\text{inverse action and its slope}\\
\theta=\arccos(\mu/K),\quad\wmin=\pi/\theta&\text{interface angle and minimal inverse slope}\\
\eta=\wmin/2-1&\text{positive margin per cell}.
\end{array}
\]
Thus \(\wmin>2\), and \(w=1/\sigma(x)\) for \(0<t<T\). Write
\begin{equation}\label{eq:5.1}
f_0(s)=\sin s-s\cos s,\qquad
c=\frac K\pi f_0(\theta)=\frac\mu\pi(\tan\theta-\theta).
\end{equation}
For \(0\le z<K\), the action slope is

\[
\sigma(z)=\frac1\pi
\left(\arccos(z/K)-\mathbf1_{\{z<\mu\}}\arccos(z/\mu)\right).
\]
It reaches its maximum \(\theta/\pi\) at \(z=\mu\), so \(w\) reaches its minimum \(\wmin\) at \(t=c\). The outer branch \(0<t<c\) has decreasing \(w\), and \(w\ge\wmin\) on both branches below \(T\).
On the outer branch, \(t=Kf_0(s)/\pi\), \(w(t)=\pi/s\), where \(s=\arccos(x(t)/K)\). Since
\[
\frac{f_0(s)}{s^3}=\int_0^1v^2\frac{\sin(sv)}{sv}\,dv
\]
decreases, so does \(t^{1/3}w(t)\). This observation also applies to the ball action \(G_K\), whose minimum inverse slope is \(2\).
On the inner branch,
\[
\sigma'(z)=\frac1\pi\left(
\frac1{\sqrt{\mu^2-z^2}}-\frac1{\sqrt{K^2-z^2}}\right)>0.
\]

\begin{lemma}[Shape of the inner inverse slope]\label{lem:5.1}
For \(c\le t<T\), define
\[
H(t):=\sqrt{T-t}\,w(t).
\]
Then \(H\) extends continuously to a positive, increasing, concave
function on \([c,T]\). It is strictly increasing and strictly concave
on \((c,T)\), with
\begin{equation}\label{eq:5.2}
H(c)=\wmin\sqrt y,\qquad
H(T)=P:=\sqrt{\frac{\mu K}{2T}}.
\end{equation}
\end{lemma}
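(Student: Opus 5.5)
We will work in the radial variable \(z\in(0,\mu]\) rather than in \(t\). On the inner branch \(t=A(z)\), so \(T-t=\int_0^z\sigma=:S(z)\) and \(w(t)=1/\sigma(z)\). Hence
\[
H=\frac{\sqrt{S(z)}}{\sigma(z)},\qquad dt=-\sigma(z)\,dz .
\]
Since \(t\) decreases as \(z\) increases, "increasing and concave in \(t\)" becomes a pair of differential inequalities in \(z\).

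\emph{Step 1 (endpoints).} At \(z=\mu\) we have \(S=y\) and \(\sigma=\theta/\pi\), which gives \(H(c)=\pi\sqrt y/\theta=\wmin\sqrt y\). As \(z\downarrow0\), write \(\pi\sigma(z)=\arcsin(z/\mu)-\arcsin(z/K)\). This gives
\[
\pi\sigma(z)=z\Bigl(\frac1\mu-\frac1K\Bigr)+O(z^3),\qquad \pi S(z)=\frac{z^2}{2}\Bigl(\frac1\mu-\frac1K\Bigr)+O(z^4).
\]
Hence \(H\to\sqrt{\pi\mu K/(2(K-\mu))}=\sqrt{\mu K/(2T)}=P\). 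This yields the continuous extension to \([c,T]\), and positivity on the open interval is obvious.

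\emph{Step 2 (monotonicity).} We have \(H_z=\tfrac12S^{-1/2}-S^{1/2}\sigma'\sigma^{-2}\) and \(dH/dt=-H_z/\sigma\). So strict increase in \(t\) is equivalent to \(\sigma(z)^2<2S(z)\sigma'(z)\) on \((0,\mu)\). To prove it, expand in powers of \(z\), using the series of \(\arcsin\). This gives
\[
\pi\sigma(z)=\sum_{k\ge0}b_k\,z^{2k+1},\qquad b_k=\frac{(2k)!}{4^k(k!)^2(2k+1)}\bigl(\mu^{-2k-1}-K^{-2k-1}\bigr)>0,
\]
convergent for \(z<\mu\). Equivalently, use the moment representation
\[
\pi\sigma(z)=\int_\mu^K\frac{z\,ds}{s\sqrt{s^2-z^2}},
\]
which exhibits \(\sigma\) as a positive mixture of functions \(g_s(z)=\arcsin(z/s)\)-derivatives. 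The inequality \(\sigma^2<2S\sigma'\) then compares two positive double sums (or double integrals over \((s,s')\in[\mu,K]^2\)). After symmetrizing in the two indices, it should reduce to an elementary termwise inequality. For a single monomial \(z^{2k+1}\) the inequality holds with ratio \((2k+1)\cdot 2/(2k+2)\ge1\). It is strict for \(k\ge1\) and an equality for \(k=0\), so the needed strictness will come from the cross terms.

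\emph{Step 3 (concavity).} Compute \(d^2H/dt^2=\sigma^{-1}\,\frac{d}{dz}\bigl(\sigma^{-1}H_z\bigr)\). Concavity in \(t\) is then equivalent to \(\frac{d}{dz}(H_z/\sigma)\le0\), with strict inequality on \((0,\mu)\). Clearing the denominators produces an inequality that is polynomial in \(S,\sigma,\sigma',\sigma''\). This is the step I expect to be the main obstacle, because it involves \(\sigma''\) and is no longer a two-term comparison.

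The first approach I will try is the moment method of the appendix. Write every quantity as an integral against the positive measure \(ds/s\) on \([\mu,K]\) of the kernels \(z/\sqrt{s^2-z^2}\), their derivatives and their primitives \(\arcsin(z/s)\)-type terms. Then reduce the target to a symmetric triple-integral inequality. The hope is that its integrand is nonnegative by a Cauchy--Schwarz or Chebyshev argument, using that all these kernels are increasing in \(z/s\) and log-convex. If this fails, the fallback is the coefficient route. Compare the coefficients of \(z^{2N}\) in the cleared inequality via the series \(b_k\). This amounts to a Turán/log-concavity-type statement for \(b_k\), which I would check from the explicit factor \((2k)!/(4^k(k!)^2(2k+1))\) together with the monotonicity of \(k\mapsto(\mu^{-2k-1}-K^{-2k-1})^{1/(2k+1)}\). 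Strictness on \((c,T)\) will follow because at least one cross term is strictly positive whenever \(\mu<K\).
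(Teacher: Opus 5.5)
Your Steps 1 and 2 are sound. Step 1 is the correct endpoint computation. The series route in Step 2 also closes. Symmetrizing \(\pi^2(2S\sigma'-\sigma^2)=\sum_{j,k}b_jb_k\bigl(\frac{2k+1}{j+1}-1\bigr)z^{2j+2k+2}\) gives the coefficient \(\frac{a(a-1)+b(b-1)+(a-b)^2}{ab}\ge0\), with \(a=j+1\), \(b=k+1\). It vanishes only for \(j=k=0\), so the diagonal terms with \(k\ge1\) already give strictness. This is a legitimate alternative to the paper's \(h\le m+1<2m_2\).

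The genuine gap is Step 3. Strict concavity is the substance of the lemma, and you leave it as a hope. Moreover, the coefficient route cannot work termwise. Write \(-H''(t)=\mathcal R/(4S^{3/2}\sigma^5)\) with \(\mathcal R=\sigma^4+4S\sigma^2\sigma'+4S^2\sigma\sigma''-12S^2(\sigma')^2\). For a single monomial \(\sigma=z^p\) one finds \(\mathcal R=(1-p)(3p+1)z^{4p}/(p+1)^2<0\) when \(p>1\); indeed \(H\propto(T-t)^{(1-p)/(2p+2)}\) is then convex. So, unlike in Step 2, every diagonal term \(b_k^4\) with \(k\ge1\) of the quartic form is negative. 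Expand \(\pi\sigma=b_0z+b_1z^3+b_2z^5+\cdots\). The \(z^4\) and \(z^6\) coefficients of \(\pi^4\mathcal R\) vanish, and the \(z^8\) coefficient is \(b_0^2\bigl(\frac{20}{3}b_0b_2-\frac{35}{4}b_1^2\bigr)\). Already at this order you need \(b_0b_2>\frac{21}{16}b_1^2\), which is the opposite direction to log-concavity. It does hold here, since \(c_0c_2/c_1^2=27/10\) for the arcsine coefficients and \((1-\rho)(1-\rho^5)/(1-\rho^3)^2>5/9\). But every higher order needs a similar domination of negative diagonal terms by cross terms, and you give no mechanism for it.

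The missing idea is the paper's normalization. Put \(Y=(1-z^2/R^2)^{-1/2}\) and use the probability measure \(d\nu\propto(Y-Y^{-1})\,dR\) on \([\mu,K]\). Then
\(z\sigma=Sh\), \(z^2\sigma'=Shm_2\) and \(z^3\sigma''=3Sh(\mathbb EY^4-m_2)\), where \(h=(\mathbb E(Y+1)^{-1})^{-1}>2\) and \(m_2=\mathbb EY^2\). Hence \(\mathcal R=(S^4h^2/z^4)\{h^2+4(h-3)m_2+12V\}\) with \(V=\operatorname{Var}(Y^2)\). The bracket equals \((h-2)^2(4h-3)+4(h-3)D+12V\), where \(D=m_2-(h-1)^2\). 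The decisive input is the moment inequality \(0\le D\le V/2\) for \(Y\ge1\). It follows from \(m+1-h\le\operatorname{Var}Y/2\) and \(\operatorname{Var}(Y^2)\ge(m+1)^2\operatorname{Var}Y\). This absorbs the possibly negative term \(4(h-3)D\) (when \(h<3\)) into \(12V\), giving \(\mathcal R\ge(S^4h^2/z^4)\{(h-2)^2(4h-3)+10V\}>0\). The paper obtains positivity from this global variance comparison, not from a pointwise Cauchy--Schwarz or Chebyshev bound on a symmetrized integrand. Note also that \(\mathcal R\) is quartic in the measure, so such an integrand would be fourfold, not threefold. Until you supply this argument or an equivalent one, the concavity claim, and hence the lemma, remains unproved.
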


The proof is given in Appendix~\ref{app:shape}. Its purpose here is
to control the integrable endpoint singularity of \(w\): the factor
\(\sqrt{T-t}\) leaves an increasing concave function, which is suited
to the signed quadrature kernel below.

\begin{corollary}[Endpoint bound]\label{cor:5.2}
On the inner branch,
\begin{equation}\label{eq:5.3}
\wmin\le w(t)\le\frac P{\sqrt{T-t}}.
\end{equation}
\end{corollary}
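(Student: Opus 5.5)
The plan is to read Corollary~\ref{cor:5.2} off Lemma~\ref{lem:5.1} together with the fact, recorded in Section~\ref{sec:5.1}, that \(w\) attains its minimum \(\wmin\) at the interface.

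\emph{Lower bound.} On the inner branch \(c<t<T\) we have \(w(t)=1/\sigma(x(t))\) with \(0<x(t)<\mu\). Since \(\sigma'>0\) on \((0,\mu)\), the slope satisfies \(\sigma(z)\le\sigma(\mu)=\theta/\pi\) there. This gives \(w(t)\ge\pi/\theta=\wmin\). Equivalently, we can invoke the stated fact that \(w\ge\wmin\) on both branches below \(T\).

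\emph{Upper bound.} By Lemma~\ref{lem:5.1}, \(H(t)=\sqrt{T-t}\,w(t)\) is increasing on \([c,T]\) and continuous up to \(T\). Hence \(H(t)\le H(T)=P\) for every \(t\in[c,T)\). Dividing by \(\sqrt{T-t}>0\) yields \(w(t)\le P/\sqrt{T-t}\). At \(t=c\) both inequalities persist by continuity of \(w\) at the interface.

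There is no real obstacle. All the content sits in Lemma~\ref{lem:5.1}, specifically in the monotonicity of \(H\) and the endpoint value \(H(T)=\sqrt{\mu K/(2T)}\), and we take that lemma as given. The only care needed is to remember that monotonicity alone suffices for the upper bound; concavity is not used here and is reserved for the quadrature estimates later.
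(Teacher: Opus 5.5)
Your proposal is correct and matches how the paper obtains the corollary, which it states without a separate proof. The lower bound is the observation in Section~\ref{sec:5.1} that $\sigma$ attains its maximum $\theta/\pi$ at $z=\mu$, so $w\ge\wmin$; the upper bound is $H(t)\le H(T)=P$ from the monotonicity and endpoint value in Lemma~\ref{lem:5.1}.
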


Put \(C=P/\wmin\). The increasing function
\(\theta^2/(1-\cos\theta)=2\{(\theta/2)/\sin(\theta/2)\}^2\) gives
\begin{equation}\label{eq:5.4}
\frac\mu\pi<C^2<\frac{\mu\pi}{8}.
\end{equation}
We shall also use
\begin{equation}\label{eq:5.5}
\theta\ge\pi/3\quad\Longrightarrow\quad C<2\sqrt y.
\end{equation}
Indeed, put \(g=\pi y/\mu=\theta-1+\sec\theta-\tan\theta\).
Since \(g'=\sin\theta/(1+\sin\theta)>0\),
\[
g\ge1-\sqrt3+\pi/3>47/150>5/16>\pi^2/32.
\]
Here \(\pi>157/50\), \(\sqrt3<26/15\), and \(\pi^2<10\) suffice. The same angle ratio as in \eqref{eq:5.4} gives
\[
\frac{C^2}{y}=\frac{\theta^2}{2(1-\cos\theta)g}
<\frac{\pi^2}{8g}<4,
\]
proving \eqref{eq:5.5}.

\subsection{Regular cells}\label{sec:5.2}

For integers \(1\le j\le n<T+1/4\), define
\[
E_j=\int_{j-1}^j x(t)\,dt-x(j-1/4),
\qquad Q_n=2\sum_{j=1}^n E_j-n.
\]
A cell is \textbf{complete} if its right endpoint is at most \(T\).
The last cell is \textbf{terminal} if it extends past \(T\); the zero
extension then supplies its unused part.
A cell \textbf{crosses the interface} if its portion below \(T\)
contains \(c\) in its interior.
We call a cell \textbf{regular} if it does not cross the interface.

Because there is only one interface, at most one cell crosses it.
If a terminal cell crosses, its left endpoint \(m=\lfloor T\rfloor\)
satisfies \(m<c<T\); inclusion of its node requires \(T-m>3/4\).
If \(c\le3/4\), every included sampling node lies at or beyond
the interface.

For an affine inverse with slope \(-\wmin\), each cell has
\(E_j=\wmin/4\), so \(2E_j-1=\eta>0\).
Regular cells will retain this margin; the exceptional crossing cell
may use part of the accumulated margin.

We shall repeatedly use a simple signed-integral comparison. Suppose a kernel \(k\) is nonnegative before \(s=a\) and nonpositive afterwards, and \(\omega\ge0\) is decreasing. Then, whenever the integrals exist,
\[
\int k(s)\omega(s)\,ds\ge\omega(a)\int k(s)\,ds,
\]
because \(k(s)(\omega(s)-\omega(a))\ge0\) on both sides. The ratio comparisons below are applications of this observation.

\begin{lemma}[Regular cells]\label{lem:5.3}
Every complete outer cell, and every complete or terminal inner cell containing its quadrature node, satisfies
\begin{equation}\label{eq:5.6}
E_j>\wmin/4.
\end{equation}
If \(c\ge1\), the first cell satisfies \(E_1>\wmin/3\).
The same first-cell bound holds for the ball action \(G_K\) when \(K>\pi\), with \(\wmin=2\).
\end{lemma}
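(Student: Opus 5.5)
The plan is to write each cell error as a signed integral of the inverse slope \(w=-x'\) against an explicit kernel. With \(t_j=j-1/4\), write \(x(t)-x(t_j)=\int_t^{t_j}w\) for \(t<t_j\) and \(-\int_{t_j}^t w\) for \(t>t_j\), and apply Fubini. This gives
\[
E_j=\int_{j-1}^{j}k_j(s)\,w(s)\,ds,\qquad
k_j(s)=\begin{cases}s-(j-1),& j-1<s<t_j,\\ -(j-s),& t_j<s<j.\end{cases}
\]
The kernel \(k_j\) is nonnegative before the node and nonpositive after it, and \(\int k_j=\tfrac12(\tfrac34)^2-\tfrac12(\tfrac14)^2=\tfrac14\). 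For a terminal cell, \(w=0\) beyond \(T\) because of the zero extension. The affine model \(w\equiv\wmin\) gives exactly \(\wmin/4\), so every claim compares \(w\) with this model.

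\textbf{Complete outer cells.} Here \(w\) is decreasing. The signed-integral comparison of Section~\ref{sec:5.2} with \(\omega=w\) and split point \(a=t_j\) gives \(E_j\ge w(t_j)/4\). Since \(t_j<c\) and \(w\) strictly decreases to \(\wmin\) at \(c\), we get \(w(t_j)>\wmin\), hence \(E_j>\wmin/4\).

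\textbf{First cell when \(c\ge1\).} The whole first cell lies in the outer branch. Instead of \(w\), I would use the decreasing function \(\omega(s)=s^{1/3}w(s)\) from Section~\ref{sec:5.1}, with the modified kernel \(k_1(s)s^{-1/3}\); it has the same sign pattern around \(s=3/4\). The comparison gives
\[
E_1\ge\omega(3/4)\int_0^1k_1(s)s^{-1/3}\,ds
\ge\wmin(3/4)^{1/3}\Bigl[\tfrac35(3/4)^{5/3}-\int_{3/4}^1(1-s)s^{-1/3}\,ds\Bigr].
\]
This is an explicit numerical constant times \(\wmin\), which I expect to exceed \(1/3\); checking that is a routine calculation. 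For \(G_K\) with \(K>\pi\) we have \(T=K/\pi>1\), so the first cell is complete, lies on a single convex branch, and has \(t^{1/3}w\) decreasing with \(w>2\). The same computation then applies with \(\wmin=2\).

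\textbf{Inner cells (main obstacle).} On the inner branch \(w\) is increasing, so the simple comparison goes the wrong way, and the negative part of the kernel sees the singular growth \(w\sim P/\sqrt{T-s}\). I would first subtract the model: \(E_j-\wmin/4=\int k_j(w-\wmin)\). Then I would write \(w=H(s)(T-s)^{-1/2}\) using Lemma~\ref{lem:5.1}. I would try the substitution \(u=\sqrt{T-s}\), under which \(dx/du=2H\) and \(x(T-u^2)\) is concave in \(u\). The cell quadrature becomes a weighted quadrature in \(u\) for the monotone concave integrand \(H\). Concavity of \(H\) gives a chord lower bound on the left part of the cell and a tangent upper bound at \(t_j\) on the right part. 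This reduces the sign of \(\int k_j(w-\wmin)\) to a two-parameter explicit family with \(H\) affine, endpoint values bounded by \eqref{eq:5.2}, and constant ratio \(C=P/\wmin\) controlled by \eqref{eq:5.4}.

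For the terminal cell, the loss from the negative kernel is cut off at \(T\). Since the node is included, \(T-t_j>0\), and the remaining integral \(\int_{t_j}^{T}(j-s)\,P(T-s)^{-1/2}\,ds\) is bounded explicitly.

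I expect the hardest part to be this inner-branch estimate near the endpoint \(T\). There the positive left part must be shown to dominate despite \(w\) growing like \((T-s)^{-1/2}\), and this is exactly where the concavity of \(H\) from Lemma~\ref{lem:5.1}, rather than mere monotonicity, is needed.
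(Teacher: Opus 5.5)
Your outer-cell step is fine: the signed comparison with split point \(t_j\) gives \(E_j\ge w(t_j)/4>\wmin/4\), a clean alternative to the paper's convexity-plus-midpoint argument. The first-cell step does not close as written. With \(\omega(3/4)\ge(3/4)^{1/3}\wmin\) your constant is \((3/4)^{1/3}\bigl(\tfrac32(3/4)^{2/3}-\tfrac9{10}\bigr)\approx0.9086\times0.3382\approx0.307\), which is below \(1/3\). For the ball it gives only \(E_1^K\ge0.614\). That is too weak for Section~\ref{sec:6.3}, where \(Q_1^K>1/3\) must beat \(\mu^2/4\) up to \(\mu=1\). You need to use the hypothesis \(c\ge1\) quantitatively. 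Since \(t^{1/3}w\) decreases on the whole outer branch, \(\omega(3/4)>\omega(c)=c^{1/3}\wmin\ge\wmin\); for the ball, \(\omega(3/4)>2(K/\pi)^{1/3}>2\). The bracket alone then exceeds \(1/3\), i.e.\ \(9/16>(37/45)^3\).

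The inner-cell part is a genuine gap. Your plan controls \(H\) through the global endpoint values \eqref{eq:5.2} and the ratio \(C=P/\wmin\). But \eqref{eq:5.4} only gives \(C^2>\mu/\pi\), which is unbounded, so no estimate of that kind can be uniform. Your terminal sketch fails concretely. Using \(w\ge\wmin\) on the positive part and \(w\le P/\sqrt{T-t}\) on the negative part gives \(E_j\ge9\wmin/32-P\sqrt{b'}\bigl(\tfrac12-\tfrac43b'\bigr)\) with \(b'=T-t_j\). At \(b'=1/8\) this exceeds \(\wmin/4\) only if \(C<3\sqrt2/16\approx0.27\), which is false whenever \(\mu\ge1\). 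Subtracting the model does not help either, because \(H(t)-\wmin\sqrt{T-t}\) is no longer concave.

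The missing idea is to compare with \(H(b)\) at the left end of the cell, not with \(\wmin\). Put \(l=T-b\) and \(L=\min(1,l)\), so \(E=\int_0^Lk(s)H(b+s)(l-s)^{-1/2}\,ds\). The secant of \(H\) through \(s=0\) and \(s=3/4\) lies below \(H\) where \(k\ge0\) and above it where \(k\le0\). Hence \(E\ge H(b)I_0(l)+\beta I_1(l)\), with \(\beta\ge0\) and \(I_i(l)=\int_0^Ls^ik(s)(l-s)^{-1/2}\,ds\).

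The decisive facts are \(I_0(l)>1/(4\sqrt l)\) and \(I_1(l)>0\) for every \(l>3/4\). For \(l\ge1\) the paper proves them by writing \(\sqrt l/\sqrt{l-s}=1+W(s/l)\) and using the nonincreasing ratio \(W(s/l)/W(s)\). For \(3/4<l\le1\) it uses closed forms, which covers terminal cells. These inequalities make the bound independent of the slope of \(H\) and of \(C\): \(E>H(b)/(4\sqrt l)=w(b)/4\ge\wmin/4\).

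Your chord-left/tangent-right comparison has the extended secant as its worst case. It therefore reduces to exactly these two moments, and without the moment inequalities it does not determine the sign.
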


\begin{proof}
Let \(b=j-1\) be the left endpoint of the cell, and write \(E=E_j\).
For an outer cell, convexity gives \(\int_b^{b+1}x>x(b+1/2)\),
and \(w\ge\wmin\) gives
\(x(b+1/2)-x(b+3/4)\ge\wmin/4\).

For any included cell, put \(l=T-b>3/4\), \(L=\min(1,l)\),
and \(k(s)=s-\mathbf1_{\{s\ge3/4\}}\).
Since \(x(b+s)-x(b+3/4)\) equals \(\int_{b+s}^{b+3/4}w\)
for \(s<3/4\) and \(-\int_{b+3/4}^{b+\min(s,l)}w\)
for \(s>3/4\), Fubini's theorem and the zero extension give
\begin{equation}\label{eq:kernel}
E=\int_0^Lk(s)\,w(b+s)\,ds.
\end{equation}
If the first cell is outer, the decreasing factor \(t^{1/3}w(t)\) and the signed-integral comparison give
\[
E_1\ge(3/4)^{1/3}w(3/4)
\left\{\frac32(3/4)^{2/3}-\frac9{10}\right\}>\frac\wmin3.
\]
Indeed, \((3/4)^{1/3}w(3/4)>c^{1/3}\wmin\ge\wmin\), and the bracket exceeds \(1/3\) because \(9/16>(37/45)^3\).
For the ball, use \(c=K/\pi>1\) and \(\wmin=2\) in the same calculation.

For an inner cell the kernel identity becomes
\[
E=\int_0^L\frac{k(s)H(b+s)}{\sqrt{l-s}}\,ds.
\]
Write \(I_i(l)=\int_0^Ls^ik(s)/\sqrt{l-s}\,ds\), \(i=0,1\). We claim
\begin{equation}\label{eq:5.7}
I_0(l)>\frac1{4\sqrt l},\qquad I_1(l)>0.
\end{equation}
For \(l\ge1\), put \(a_i=\int_0^1s^ik(s)\,ds\), so
\(a_0=1/4\), \(a_1=11/96\).
For \(0<s<1\) and \(0<a\le1\), write
\[
W(s)=(1-s)^{-1/2}-1,\qquad
R_a(s)=\frac{W(as)}{W(s)}
=a\sqrt{\frac{1-s}{1-as}}
\frac{1+\sqrt{1-s}}{1+\sqrt{1-as}}.
\]
Both factors are nonincreasing in \(s\).
The signed-integral comparison gives
\[
\sqrt l\,I_i(l)-a_i
=\int_0^1s^ik(s)W(s/l)\,ds
\ge R_{1/l}(3/4)\bigl(I_i(1)-a_i\bigr)>0,
\]
since \(I_0(1)=1/3\) and \(I_1(1)=3/20\).

For \(3/4<l\le1\), the substitution \(u=l-s\) gives
\[
I_0(l)=\frac43l^{3/2}-2\sqrt{l-3/4}
\ge\frac13>\frac1{4\sqrt l},
\]
where the first expression decreases to \(1/3\). Writing \(s=l-(l-s)\) gives
\[
I_1(l)=lI_0(l)-\frac4{15}l^{5/2}
+\frac23(l-3/4)^{3/2}\ge\frac l{15}>0.
\]
This proves \eqref{eq:5.7} in both ranges.

Let \(H(b)+\beta s\) be the secant line through the values of
\(H(b+s)\) at \(s=0\) and \(s=3/4\).
Concavity places \(H(b+s)\) above this line for \(0\le s\le3/4\)
and below it for \(3/4\le s\le L\). The signs of \(k\) therefore give
\[
E\ge H(b)I_0(l)+\beta I_1(l).
\]
This comparison holds for any nonnegative increasing concave factor \(H\). Here \(H(b)>0\) and \(\beta\ge0\), so \eqref{eq:5.7} gives
\(E>H(b)/(4\sqrt l)\ge\wmin/4\).
All endpoint singularities are integrable. \end{proof}

\subsection{The cell crossing the interface}\label{sec:5.3}

For a complete crossing cell write \(k-1<c<k\le T\), where \(k\) is an integer, and put \(h=k-c\),
\[
E=\int_{k-1}^kx(t)\,dt-x(k-1/4).
\]
The following comparison controls the drop of the inner inverse over a short interval. It is used here and in the first-cell estimate.

\begin{lemma}[Homogeneous inverse drop]\label{lem:5.4}
If \(\theta\le\pi/3\), then
\begin{equation}\label{eq:5.8}
Z(v):=\frac{\mu-x(c+v)}{\wmin}<\frac83v
\qquad(0<v\le c/3).
\end{equation}
In particular, \(T-c>c/3\), so the inner branch exists throughout this interval.
\end{lemma}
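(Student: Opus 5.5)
The plan is to reduce the claim to a single inequality in the angle \(\theta\), using concavity of the inner branch and homogeneity of the action.

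\emph{Step 1 (monotone ratio).} On the inner branch \(w=-x'\) is increasing, since \(\sigma'>0\) there. Hence
\[
Z(v)=\frac1{\wmin}\int_c^{c+v}w(t)\,dt ,
\]
and the average \(Z(v)/v\) is nondecreasing in \(v\). So it suffices to prove \(Z(c/3)<8c/9\), once we know \(c+c/3<T\). Equivalently, put
\[
\delta_*=\frac89\wmin c .
\]
It is enough to show that \(\mu-\delta_*>0\) and that the action gained by moving from \(\mu\) down to \(\mu-\delta_*\) exceeds \(c/3\):
\[
V(\delta_*):=A(\mu-\delta_*)-c=\int_{\mu-\delta_*}^{\mu}\sigma(s)\,ds>\frac c3 .
\]
Both conclusions then follow. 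First, \(T-c\ge V(\delta_*)>c/3\), because \(A\) decreases. Second, by monotonicity of \(x\), the level \(c+c/3\) is reached at a point \(x>\mu-\delta_*\), so \(Z(c/3)<\delta_*/\wmin=8c/9\). The ratio monotonicity then gives \(Z(v)<\tfrac83 v\) for all \(0<v\le c/3\).

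\emph{Step 2 (homogeneous reduction).} Scale \(s=\mu u\) and use \(\mu/K=\cos\theta\), \(\wmin=\pi/\theta\) and \eqref{eq:5.1}. Then
\[
\frac{\delta_*}{\mu}=d_*:=\frac89\Bigl(\frac{\tan\theta}{\theta}-1\Bigr),
\qquad
V(\delta_*)=\frac\mu\pi\int_{1-d_*}^1\bigl(\arccos(u\cos\theta)-\arccos u\bigr)\,du .
\]
The constraint \(\theta\le\pi/3\) gives \(d_*\le\tfrac89(3\sqrt3/\pi-1)<0.6<1\), so \(\mu-\delta_*>0\). The remaining task is the \(\mu\)-free inequality
\[
\Phi(\theta):=\int_{1-d_*}^1\bigl(\arccos(u\cos\theta)-\arccos u\bigr)\,du>\frac{\tan\theta-\theta}3,
\qquad 0<\theta\le\pi/3 .
\]

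\emph{Step 3 (the one-variable inequality).} This is the main obstacle. As \(\theta\to0\), the right side is \(\sim\theta^3/9\) and \(d_*\sim\tfrac8{27}\theta^2\). With \(u=1-d\) one has \(\arccos u\approx\sqrt{2d}\) and \(\arccos(u\cos\theta)\approx\sqrt{\theta^2+2d}\). Substituting \(d=\theta^2r/2\) gives
\[
\Phi\approx\frac{\theta^3}2\int_0^{16/27}\bigl(\sqrt{1+r}-\sqrt r\bigr)\,dr .
\]
A quick estimate of this integral gives about \(0.31\), so \(\Phi\approx0.155\,\theta^3>\theta^3/9\). The limit is therefore strict with a margin of roughly \(40\%\). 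The constant \(8/3\) appears to be chosen to leave room at both ends of \((0,\pi/3]\).

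To make this rigorous uniformly in \(\theta\), I would bound the integrand from below. Use \(\arccos(u\cos\theta)-\arccos u\ge\theta\,\phi(1-u)\) with an explicit \(\phi\). A clean route is to write the integrand as \(\int_{u\cos\theta}^{u}(1-x^2)^{-1/2}\,dx\). The lower bound \((1-x^2)^{-1/2}\ge(2(1-x))^{-1/2}\) then gives
\[
\arccos(u\cos\theta)-\arccos u\ \ge\ \sqrt{2(1-u\cos\theta)}-\sqrt{2(1-u)} .
\]
This integrates in closed form over \([1-d_*,1]\). For the right side, use \(\tan\theta-\theta\le\theta^3/3+\tfrac{2}{15}\theta^5\sec^2\theta\), or a similar elementary bound valid on \([0,\pi/3]\).

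What remains is an explicit inequality between elementary functions of \(\theta\). I would check it by its small-\(\theta\) expansion, together with monotonicity or a few rational evaluations on \([0,\pi/3]\), in the style of the numerical constants used for \eqref{eq:5.5}. If the square-root lower bound loses too much near \(\theta=\pi/3\), I would fall back on concavity of \(\arccos\) on \([0,1]\). This makes \(\arccos(u\cos\theta)\) lie above its chord in \(u\), which gives a second lower bound that is sharper for larger \(\theta\), and I would combine the two on subintervals.
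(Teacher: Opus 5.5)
Your Steps 1 and 2 are correct, and they are the paper's own reduction. After rescaling to \(c=3/4\), which preserves \(\theta\) and \(\wmin\), one has \(\delta_*=\tfrac23\wmin\). So your condition \(V(\delta_*)>c/3\) is exactly the paper's \(B(2/3)>1/4\) for \(B(z)=A(\mu-\wmin z)-c\). Your monotone-average argument for \(Z(v)/v\) is the concavity used to pass from \(z=2/3\) to all of \((0,2/3]\).

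The gap is Step~3, which carries the entire content of the lemma. You give only the \(\theta\to0\) asymptotics and a plan. (Incidentally, \(\int_0^{16/27}(\sqrt{1+r}-\sqrt r)\,dr\approx0.369\), not \(0.31\).) The plan as written breaks at the critical endpoint:
\begin{itemize}
\item At \(\theta=\pi/3\), your proposed bound gives \(\theta^3/3+\tfrac2{15}\theta^5\sec^2\theta\approx1.054\), against \(\tan\theta-\theta\approx0.685\). You would then need \(\Phi(\pi/3)>0.351\), whereas \(\Phi(\pi/3)\approx0.270\).
\item If you keep \(\tan\theta\) exact, the square-root lower bound gives about \(0.241\) against \((\tan\theta-\theta)/3\approx0.228\), a margin under \(6\%\).
\item \(d_*\) itself involves \(\tan\theta/\theta\), so replacing \(\tan\theta\) by an elementary bound needs two-sided control.
\end{itemize}
With such a thin margin, ``small-\(\theta\) expansion plus a few rational evaluations'' is not a proof. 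You would need a quantitative error term near \(0\) and a genuine interpolation argument. For example, both sides increase in \(\theta\), so you can bracket on a grid, but the grid must be fine near \(\pi/3\).

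The missing idea is the paper's monotone comparison in \(\theta\), which avoids any uniform transcendental inequality. Write \(B'(z)=q_\theta(1-z/M)\) with \(M=\mu\theta/\pi\) and
\[
q_\theta(\xi)=\int_0^1\Bigl(1+\frac{\xi^{-2}-1}{\sin^2(\theta u)}\Bigr)^{-1/2}du .
\]
The normalization \(c=3/4\) and the monotonicity of \((\tan\theta-\theta)/\theta^3\) give \(M\theta^2\ge\theta_*^2\) with \(\theta_*=\pi/3\). Convexity of \(F(r)=(1-r)^{-2}-1\) then gives \(F(z/M)\le(\theta/\theta_*)^2F(z)\). Combined with \(\sin(\theta u)\ge(\theta/\theta_*)\sin(\theta_*u)\), this yields \(B'(z)\ge q_{\theta_*}(1-z)=B_*'(z)\), the derivative for the single shell \((\mu_*,K_*)=(3,6)\). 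Hence \(B\ge B_*\) for every \(\theta\le\pi/3\).

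The proof then finishes with one crude estimate, \(B_*(2/3)-1/4\ge(3\pi/4+93/32-3\sqrt3)/\pi>0\), together with concavity of \(B_*\). Your inequality \(\Phi(\theta)>(\tan\theta-\theta)/3\) is true, since it is equivalent to \(B(2/3)>1/4\), which this argument proves. Your route could therefore be completed, but as submitted its decisive step is unproved.
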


\begin{proof}
The geometric comparison below applies for all \(0<\mu<K\),
independently of the condition \(T>1\).
For \(\tau=4c/3\), the rescaling
\[
\widetilde A(z)=\tau^{-1}A(\tau z),\qquad
\widetilde x(t)=\tau^{-1}x(\tau t),\qquad
\widetilde Z(v)=\tau^{-1}Z(\tau v)
\]
has interface height \(\widetilde c=3/4\) and leaves
\(\theta\) and \(\wmin\) unchanged. It therefore suffices to
prove the estimate for \(c=3/4\) and \(v\le1/4\).
Put \(\theta_*=\pi/3\), \(M=\mu\theta/\pi\), and
\[
B(z)=A(\mu-\wmin z)-c,\qquad
B'(z)=q_\theta(1-z/M),
\]
where
\[
q_\theta(\xi)=\int_0^1
\left(1+\frac{\xi^{-2}-1}{\sin^2(\theta u)}\right)^{-1/2}\,du.
\]
Indeed, for \(0<\xi<1\),
\[
\arccos(\xi\cos\theta)-\arccos\xi
=\int_0^\theta
\frac{\xi\sin s}{\sqrt{1-\xi^2\cos^2s}}\,ds
=\theta q_\theta(\xi),
\]
with endpoint values obtained by continuity.
The increasing function
\[
\frac{\tan\theta-\theta}{\theta^3}
=\int_0^1u^2\left(\frac{\tan(\theta u)}{\theta u}\right)^2du
\]
and \eqref{eq:5.1} give
\[
M\theta^2\ge
\frac{3\theta_*^3}{4(\tan\theta_*-\theta_*)}>\theta_*^2;
\]
the last step is \(7\pi/12>\sqrt3\). In particular,
\(M\ge(\theta_*/\theta)^2\ge1\).
For the convex function \(F(r)=(1-r)^{-2}-1\),
\[
F(z/M)\le M^{-1}F(z)\le(\theta/\theta_*)^2F(z).
\]
Together with
\(\sin(\theta u)\ge(\theta/\theta_*)\sin(\theta_*u)\), this proves
\(B'(z)\ge q_{\theta_*}(1-z)\) for \(0\le z\le2/3\).

The comparison shell \((\mu_*,K_*)=(3,6)\) has
minimal inverse slope \(3\) and \(M_*=1\).
Define \(B_*(z)=A_*(3-3z)-A_*(3)\).
Since \(B(0)=B_*(0)=0\), the derivative comparison gives
\(B(z)\ge B_*(z)\).
For \(0\le u\le1/9\), squaring gives
\(\sqrt{1-u}\ge1-9u/16\). Consequently,
\[
A_*(1)\ge\frac1\pi\int_3^6\left(1-\frac9{16R^2}\right)dR
=\frac{93}{32\pi},
\]
and
\[
B_*(2/3)-1/4\ge
\frac{3\pi/4+93/32-3\sqrt3}{\pi}>0,
\]
using \(\pi>25/8\) and \(\sqrt3<7/4\).
The concavity of \(B_*\) and \(B_*(0)=0\) give
\(B(z)\ge B_*(z)>3z/8\) for \(0<z\le2/3\).
Since \(M\ge1\), the point \(\mu-2\wmin/3\) is positive, so
\[
T-c>B(2/3)>1/4.
\]
Given \(0<v\le1/4\), take \(z=8v/3\) and invert
\(B(z)>3z/8=v\) to obtain \(Z(v)<8v/3\).
Scaling back proves both \eqref{eq:5.8} and \(T-c>c/3\).
\end{proof}

\begin{lemma}[Complete crossing cells]\label{lem:5.5}
For a complete crossing cell, the following bounds hold:
\[
\begin{aligned}
\mu\ge1&\quad\Longrightarrow\quad E/\wmin\ge9/32-C/12,\\
\theta\le\pi/3,\ c\ge1&\quad\Longrightarrow\quad E>\wmin/8.
\end{aligned}
\]
\end{lemma}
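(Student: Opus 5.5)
We write the crossing-cell error through the same kernel identity \eqref{eq:kernel} used for regular cells. With \(b=k-1\), \(k(s)=s-\mathbf 1_{\{s\ge3/4\}}\) and \(L=1\) (the cell is complete), we have
\[
E=\int_0^1k(s)\,w(b+s)\,ds .
\]
The interface sits at \(s_c=1-h\in(0,1)\). On \((0,s_c)\) the inverse slope is on the outer branch, so it is decreasing and at least \(\wmin\). On \((s_c,1)\) it is on the inner branch, where \(w=H/\sqrt{T-t}\) and Corollary~\ref{cor:5.2} gives \(\wmin\le w\le P/\sqrt{T-t}\).

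The strategy is to bound the positive part of \(k\) from below using \(w\ge\wmin\), and the negative part from above using the endpoint bound. For the first implication, assume \(\mu\ge1\). The positive part gives
\[
\int_0^{3/4}s\,w\ge\wmin\cdot\tfrac{9}{32},
\]
which is exactly the \(9/32\) in the statement. The negative part is \(-\int_{3/4}^1(1-s)\,w(b+s)\,ds\). Where \(b+s\) lies in the inner branch, we use \(w\le P/\sqrt{T-b-s}\le P/\sqrt{1-s}\), since \(T\ge k\). This gives
\[
\int_{3/4}^1(1-s)\,\frac{P}{\sqrt{1-s}}\,ds=P\cdot\tfrac23\,(1/4)^{3/2}=\frac{P}{12},
\]
and dividing by \(\wmin\) produces the \(C/12\), since \(C=P/\wmin\). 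Where \(b+s\) is still in the outer branch, \(w\) is bounded by its value at the preceding points. I expect to handle this by noting that the outer \(w\) on \((3/4,s_c)\) is at most the outer \(w\) at \(b+3/4\). The simplest route is to check that the inner bound \(P/\sqrt{T-t}\) dominates the outer values there as well. This holds because \(H(c)=\wmin\sqrt y\le H(t)\) and the outer \(w\) near \(c\) tends to \(\wmin\), so on the outer side we only need \(w\le P/\sqrt{1-s}\) for \(s\in(3/4,s_c)\). Outer \(w\) is decreasing toward \(\wmin\), and \(P/\sqrt{1-s}\ge 2P\ge\wmin\cdot 2C\ge\wmin\) by \eqref{eq:5.4} when \(\mu\ge1\). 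If the outer \(w\) at \(b+3/4\) could exceed this, I would instead keep that piece with its positive counterpart: the signed-integral comparison with the decreasing outer \(w\) across \(s=3/4\) gives a nonnegative contribution on the outer portion. That alternative is the safer argument, so I would write it that way.

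For the second implication, assume \(\theta\le\pi/3\) and \(c\ge1\), and work directly with \(x\) rather than \(w\):
\[
E=\int_{k-1}^k\bigl(x(t)-x(k-1/4)\bigr)\,dt .
\]
On the outer part, convexity together with slope at least \(\wmin\) gives the usual margin. On the inner part of length \(h\), we control the drop \(\mu-x(c+v)\) by Lemma~\ref{lem:5.4}: \(Z(v)<8v/3\) for \(v\le c/3\). The hypothesis \(c\ge1\) ensures \(h\le1\le c/3\cdot3\), and for \(v\le h\le 1\) we apply the lemma after checking \(v\le c/3\) or splitting. This bounds how far \(x\) drops beyond \(c\) by \(\tfrac83\wmin v\) instead of an unbounded singular slope. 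We then compare \(E\) with the piecewise-linear profile of slope \(\wmin\) before \(c\) and at most \(\tfrac83\wmin\) after it, and minimise over \(h\in(0,1)\). The worst case, with the node \(3/4\) in the steep part and \(h=1/4\) or so, should still leave \(E/\wmin>1/8\).

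The main obstacle is that last minimisation. The steeper post-interface slope \(8/3\) erodes the \(1/4\) margin, and the restriction \(v\le c/3\) has to be reconciled with \(h\) up to \(1\). I would try the case split \(h\le1/4\) versus \(h>1/4\) first, because only the region past the node \(s=3/4\) hurts.
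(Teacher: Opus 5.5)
Your treatment of the first implication when the node $k-1/4$ is inner ($h>1/4$) is exactly the paper's argument.

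When the node is outer ($h\le1/4$), the fallback you settle on does not reach the stated bound. If the outer piece $\int_0^{1-h}k(s)w(b+s)\,ds$ is only known to be nonnegative, you are left with $E\ge-\frac23Ph^{3/2}$, and the $9/32$ is lost. The fix has three steps:
First, keep the mass of the outer piece: $\int_0^{1-h}k(s)w(b+s)\,ds\ge w(b+3/4)\int_0^{1-h}k(s)\,ds\ge\wmin(1/4+h^2/2)$.
Second, add the inner loss $-\frac23Ch^{3/2}\wmin$.
Third, check that $1/4+h^2/2-\frac23Ch^{3/2}$ decreases on $[0,1/4]$ to $9/32-C/12$. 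This needs $C>1/2$, which is where $\mu\ge1$ and \eqref{eq:5.4} enter.
Your ``simplest route'' can also be completed, but not as written: ``outer $w$ decreases toward $\wmin$'' is a lower bound and proves nothing there. What works is that $t^{1/3}w(t)$ decreases, which gives $w(b+s)\le(4/3)^{1/3}\wmin<2C\wmin\le P/\sqrt{1-s}$ for $3/4\le s\le1-h$.

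The genuine gap is in the second implication. Lemma~\ref{lem:5.4} bounds the secant $Z(v)/v$, not the slope, and only for $v\le c/3$. Since $Z$ is convex, the bound cannot be continued past $c/3$. From $c\ge1$ you get $k\ge2$, hence $c\ge2-h$, and this gives $h\le c/3$ only when $h\le1/2$.

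Even in the range $h\le1/2$, a pointwise drop bound is not enough. Using only $0\le Y<5v/3$ in \eqref{eq:5.10} gives $E/\wmin>1/4-5h^2/6$, which is only $1/24$ at $h=1/2$. What is needed is the convexity of $Y=Z-v$. With $a_0=(h-1/4)_+$ and $b_0=\min(h,1/4)$, it gives $\int_0^hY\le\frac h2Y(a_0)+\frac{b_0}2Y(h)$, hence a loss below $hb_0\le1/8$. The extremal case is $h=1/2$, not $h\approx1/4$ as you guessed.

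For $h>1/2$, take for instance $k=2$, $c$ just above $1$, and $\theta$ small enough that $T\ge2$. Nothing you cite controls $Z$ on $(c/3,h)$. When $T$ is close to $k$, the inner slope is of size $P/\sqrt{T-t}$ near the right end of the cell, so no comparison profile with a uniformly bounded slope exists. Even a secant bound $Z(v)<\frac83v$ for all $v\le h$ would not rescue the argument: the same convexity estimate then yields only $E/\wmin>1/4-\frac56hb_0$, which falls below $1/8$ once $h>3/5$.

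The paper handles $h\ge1/2$ by a separate idea, \eqref{eq:5.9}. It writes $H(c+v)=\wmin\sqrt y+U(v)$, where $U\ge0$, $U(0)=0$, and $U(v)/v$ is nonincreasing by the concavity in Lemma~\ref{lem:5.1}. Two signed-integral comparisons show that the $U$-part contributes nonnegatively. The frozen profile $\wmin\sqrt{y/(y-v)}$, with $y\ge h$, then gives $E/\wmin\ge9/32-\sqrt h/12>1/8$.
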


\begin{proof} First we prove the required bound when \(h\ge1/2\):
\begin{equation}\label{eq:5.9}
h\ge1/2\quad\Longrightarrow\quad E>\wmin/8.
\end{equation}

Put \(a=h-1/4\) and
\[
\psi(v)=\begin{cases}1-h+v,&v<a,\\v-h,&v>a.\end{cases}
\]
The outer contribution is at least \(\wmin(1-h)^2/2\).
The inner slope is \(H(c+v)/\sqrt{y-v}\).

Write \(H(c+v)=\wmin\sqrt y+U(v)\), where \(U\ge0\),
\(U(0)=0\), and \(U(v)/v\) is nonincreasing.
The signed-integral comparison gives
\[
J_y:=\int_0^h\frac{\psi(v)v}{\sqrt{y-v}}\,dv
\ge\sqrt{\frac{h-a}{y-a}}J_h,\quad
J_h=\frac43h^{3/2}-\frac4{15}h^{5/2}-h+\frac1{12}>0.
\]
Here \(J_h''=(1-h)/\sqrt h\ge0\),
\(J_{1/2}=3\sqrt2/10-5/12>0\), and
\(J'_{1/2}=5\sqrt2/6-1>0\).
Applying the comparison to \(U(v)/v\) now gives
\[
\int_0^h\frac{\psi(v)U(v)}{\sqrt{y-v}}\,dv
\ge\frac{U(a)}{a}J_y\ge0.
\]

Since \((1-h)^2/2+\int_0^h\psi=1/4\), discard the nonnegative excess before \(a\). On the negative part, \(y\ge h\) gives
\[
\begin{aligned}
\frac E\wmin
&\ge\frac14-\int_a^h(h-v)
\left(\sqrt{\frac y{y-v}}-1\right)dv\\
&\ge\frac14-\int_0^{1/4}u
\left(\sqrt{\frac h u}-1\right)du
=\frac9{32}-\frac{\sqrt h}{12}>\frac{19}{96}>\frac18.
\end{aligned}
\]
This proves \eqref{eq:5.9}, including \(y=h\) by integrability at the endpoint.

The inner excess \(Y(v)=Z(v)-v\) is convex, nonnegative and vanishes at zero. Write also \(X(u)=\int_0^u\bigl(w(c-u')/\wmin-1\bigr)\,du'\ge0\) for the outer excess, so that
\[
\begin{aligned}
x(c-u)&=\mu+\wmin\bigl(u+X(u)\bigr),
&0\le u\le1-h,\\
x(c+v)&=\mu-\wmin\bigl(v+Y(v)\bigr),
&0\le v\le h.
\end{aligned}
\]
For \(h\le1/2\), set \(a_0=(h-1/4)_+\), \(b_0=\min(h,1/4)\). Integration over the cell, together with \((1-h)^2/2-h^2/2+h-1/4=1/4\), gives the same identity for outer and inner nodes:
\begin{equation}\label{eq:5.10}
E/\wmin=1/4+O+Y(a_0)-\int_0^hY(v)\,dv,\qquad O\ge0.
\end{equation}
For an inner node, \(O=\int_0^{1-h}X\ge0\).

For an outer node, put \(u_0=1/4-h\) and
\(\chi(u)=1-h-u-\mathbf1_{\{u<u_0\}}\). Then
\(O=\int_0^{1-h}X-X(u_0)=\int_0^{1-h}X'(u)\chi(u)\,du\).
Here \(X'\ge0\) increases in \(u\) because \(w\) decreases on the
outer branch. The function \(\chi\) is negative on \([0,u_0)\),
nonnegative on \((u_0,1-h]\), and has total mass \(1/4+h^2/2\).
Hence \(\chi(u)(X'(u)-X'(u_0))\ge0\), and
\(O\ge X'(u_0)(1/4+h^2/2)\ge0\).

If \(\theta\le\pi/3,c\ge1\), then \(k\ge2\) and
\(c=k-h\ge3/2\). Thus \(h\le1/2\le c/3\), and the homogeneous
bound \eqref{eq:5.8} gives \(Y(h)<5h/3<2h\). Convexity gives
\[
\int_0^hY\le\frac h2Y(a_0)+\frac{b_0}{2}Y(h).
\]
Hence \(E/\wmin>1/4-hb_0\ge1/8\). Together with \eqref{eq:5.9}, this proves the small-angle bound.

For \(\mu\ge1\), we prove the remaining bound for every complete
crossing cell:
\[
\mu\ge1\quad\Longrightarrow\quad E/\wmin\ge9/32-C/12.
\]
If the node is inner, \eqref{eq:5.3} gives
\[
E\ge9\wmin/32-P\int_0^{1/4}\frac{u}{\sqrt{T-k+u}}\,du
\ge9\wmin/32-P/12.
\]
If the node is outer, \(h\le1/4\), and \eqref{eq:5.10}, \eqref{eq:5.3} imply
\[
E/\wmin\ge1/4+h^2/2
-C\int_0^h\frac{h-s}{\sqrt{y-s}}\,ds
\ge1/4+h^2/2-2Ch^{3/2}/3.
\]
Since \(C>\sqrt{\mu/\pi}>1/2\), the final expression decreases on \([0,1/4]\), where its endpoint value is \(9/32-C/12\). \end{proof}

\subsection{A terminal cell crossing the interface}\label{sec:5.4}

\begin{lemma}[Terminal crossing]\label{lem:5.7}
Suppose \(\mu\ge1\) and \(n\ge2\) is an integer with \(n-1<c<T<n<T+1/4\). The terminal cell satisfies
\[
E_n/\wmin\ge\frac9{32}-\frac C{6\sqrt2},\qquad K<4\pi n.
\]
\end{lemma}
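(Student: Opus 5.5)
The plan is to prove the two assertions separately: first the kinematic bound \(K<4\pi n\), which uses only the hypotheses on \(c\) and \(T\), and then the quadrature bound for \(E_n\), using the kernel identity \eqref{eq:kernel} and the endpoint bound \eqref{eq:5.3}.

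For \(K<4\pi n\), I would use the outer-branch parametrisation. By \eqref{eq:5.1}, \(c=Kf_0(\theta)/\pi\), and \(T=(K-\mu)/\pi=K(1-\cos\theta)/\pi\). The hypotheses \(n-1<c\) and \(T<n\), together with \(n\ge2\), give
\[
\frac{f_0(\theta)}{1-\cos\theta}=\frac cT>\frac{n-1}{n}\ge\frac12 .
\]
The ratio \(r(\theta)=f_0(\theta)/(1-\cos\theta)\) should be increasing on \((0,\pi/2)\); this is a routine derivative or moment check, similar to the identity for \(f_0(s)/s^3\). At \(\theta_0=\arccos(3/4)\) a direct evaluation gives \(r(\theta_0)<1/2\) (about \(0.48\)). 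Hence \(\theta>\theta_0\), so \(1-\cos\theta>1/4\), and
\[
K=\frac{\pi T}{1-\cos\theta}<4\pi T<4\pi n .
\]
The only delicate point is certifying \(r(\theta_0)<1/2\) with rational bounds on \(\sin\theta_0=\sqrt7/4\) and \(\theta_0\).

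For the cell bound, put \(b=n-1\) and \(l=T-b\in(3/4,1)\). By \eqref{eq:kernel}, \(E_n=\int_0^lk(s)w(b+s)\,ds\), and I write \(w=\wmin+(w-\wmin)\). The affine part contributes
\[
\wmin\Bigl(\frac{l^2}2-l+\frac34\Bigr)=\wmin\Bigl(\frac14+\frac{(1-l)^2}2\Bigr).
\]
On \([0,3/4)\) the excess \(w-\wmin\ge0\) meets \(k\ge0\), so it can be discarded. The only loss comes from \(s\in(3/4,l)\), which lies on the inner branch whenever the node does, since \(b+3/4>c\) there. On that piece, \eqref{eq:5.3} bounds the loss by
\[
\int_{3/4}^l(1-s)\Bigl(\frac P{\sqrt{l-s}}-\wmin\Bigr)ds .
\]
Substituting \(u=l-s\in(0,\delta)\) with \(\delta=l-3/4\le1/4\), and dividing by \(\wmin\), this loss is at most
\[
C\Bigl(2(1-l)\sqrt\delta+\tfrac23\delta^{3/2}\Bigr)-\Bigl((1-l)\delta+\tfrac{\delta^2}2\Bigr).
\]
I then minimise
\[
\frac14+\frac{(1-l)^2}{2}+(1-l)\delta+\frac{\delta^2}2-C\Bigl(2(1-l)\sqrt\delta+\tfrac23\delta^{3/2}\Bigr)
\]
over \(l\in(3/4,1)\). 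I expect the worst case to reduce to an expression of the form \(9/32-C\cdot\mathrm{const}\), and the target constant \(1/(6\sqrt2)\) suggests the extremum is tied to \(\sqrt\delta\)-type values such as \(\delta=1/8\) or \(1/4\). Since \(C>1/2\) by \eqref{eq:5.4}, the \(C\)-term dominates the \(\delta\)-derivative, which should pin the extremal \(l\).

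If the node \(n-3/4\) lies before \(c\), so it is an outer node, I would instead follow the outer-node argument of Lemma~\ref{lem:5.5}. Write \(x(c\pm\cdot)\) through \(X\) and \(Y\). The analogue of \eqref{eq:5.10} with the zero extension beyond \(T\) gives \(E_n/\wmin\ge\frac14+(\text{nonnegative})-\int(\ldots)Y\). Bounding \(Y\) via \eqref{eq:5.3} over the short inner piece \((c,T)\), of length \(y<1/4+\ldots\), should give a loss of the order \(Cy^{3/2}\), which is below the inner-node loss.

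I expect the main obstacle to be this one-variable optimisation in the inner-node case: extracting exactly the constant \(1/(6\sqrt2)\) uniformly in \(l\), and checking that the outer-node case is never worse.
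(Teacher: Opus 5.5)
Your plan is correct in outline. It differs from the paper in two places, and the outer-node case is missing a final comparison that you need to add.

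\textbf{Inner node.} When the node \(n-\tfrac14\) (not \(n-\tfrac34\)) lies on the inner branch, your argument is the paper's in different bookkeeping. Because \((1-l)+\delta=\tfrac14\), the \(C\)-free part of your expression is identically \(\tfrac14+\tfrac12\cdot\tfrac1{16}=\tfrac9{32}\), so there is nothing to balance. The bound is \(\tfrac9{32}-C\sqrt\delta\,(\tfrac12-\tfrac43\delta)\), and the maximum of the last factor is \(1/(6\sqrt2)\), at \(\delta=\tfrac18\).

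\textbf{Where you differ from the paper.} The paper does not split by node position. It shows that the endpoint bound \(w(t)\le P/\sqrt{T-t}\) still holds on the outer part of \([n-\tfrac14,T)\): there \(\tfrac78<t/T\le r(s)<\tfrac{2s}{3}\), which forces \(w=\pi/s<\tfrac{16\pi}{21}<\tfrac52<2P\). The same inequality \(r(s)<2s/3\) gives \(\theta>\tfrac34\) and hence \(K<4\pi n\).
\begin{itemize}
\item For \(K<4\pi n\) you instead prove that \(r\) is increasing (the numerator of \(r'\) is \(\sin s\,(s-\sin s)>0\)) and evaluate \(r(\arccos\tfrac34)\approx0.478\). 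This is a valid alternative.
\item For an outer node you reuse the sign-change argument of Lemma~\ref{lem:5.5}. It carries over verbatim, since the zero extension does not touch the outer part of the cell, so the outer excess contributes \(O\ge0\).
\end{itemize}

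\textbf{What the outer-node case is missing.} Saying the loss is below the inner-node loss is not enough, because the base term is also smaller. The loss also contains \(2C\beta\sqrt y\), so it is not of order \(Cy^{3/2}\). With \(h=n-c<\tfrac14\), \(y=T-c\) and \(\beta=n-T\), your route gives
\[
\frac{E_n}{\wmin}\ge\frac14+\frac{h^2}{2}-C\Bigl(2\beta\sqrt y+\frac23y^{3/2}\Bigr)\ge\frac14+\frac{h^2}{2}-\frac83C\Bigl(\frac h2\Bigr)^{3/2},
\]
where the second step maximises over \(\beta+y=h\). The base \(\tfrac14+\tfrac{h^2}{2}\) falls short of \(\tfrac9{32}\) by up to \(\tfrac1{32}\).

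Write \(h=p^2/4\) with \(0<p<1\). The target inequality becomes \(C(1-p^3)/(6\sqrt2)\ge(1-p^4)/32\), that is, \(C\ge\tfrac{3\sqrt2}{16}\bigl(1+\tfrac{p^3}{1+p+p^2}\bigr)\). This holds for all such \(p\) once \(C\ge\sqrt2/4\), which follows from \eqref{eq:5.4}: \(C^2>\mu/\pi\ge1/\pi>1/8\). With this check added, your proof is complete.

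\textbf{Trade-off.} The paper's route gives one uniform bound \(\tfrac9{32}-C\Phi(\beta)\) for both node positions, at the cost of the ad hoc estimate \(w<\tfrac52<2P\). Yours reuses the machinery of Lemma~\ref{lem:5.5}, but needs the case split and a lower bound on \(C\).
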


\begin{proof} Put \(\beta=n-T\in(0,1/4)\). We first note that
\[
r(s):=\frac{f_0(s)}{1-\cos s}<\frac{2s}{3}
\qquad(0<s<\pi/2).
\]
Indeed, the equivalent difference \(2s+s\cos s-3\sin s\) and its first derivative vanish at zero, while its second derivative is \(\sin s-s\cos s>0\).

The bound \eqref{eq:5.3} also holds on any outer part of the negative terminal kernel \(n-1/4\le t<T\).
Put \(s=\arccos(x(t)/K)\). Since \(\mu\le x(t)\),
\[
\frac78\le\frac{n-1/4}{n}<\frac tT
\le r(s)<\frac{2s}{3}.
\]
Thus \(w(t)=\pi/s<16\pi/21<5/2\). On the other hand,
\(P^2>\pi\mu/2\ge\pi/2\) and \(T-t<1/4\), so
\(P/\sqrt{T-t}>2P>5/2\); here \(25/8<\pi<22/7\) suffices.

Put \(b=1/4-\beta\). The function
\(\sqrt b(1/2-4b/3)\) on \([0,1/4]\) attains its maximum at \(b=1/8\).
Thus
\[
\Phi(\beta)=\sqrt b(1/2-4b/3)\le1/(6\sqrt2).
\]
The kernel identity gives
\[
E_n\ge9\wmin/32-P\Phi(\beta),
\]
which proves the first bound.

At the interface, \(r(\theta)=c/T>(n-1)/n\ge1/2\), so \(\theta>3/4\).
Since \(\cos(3/4)\le1499/2048<3/4\), we have \(\cos\theta<3/4\).
Together with \(T<n\), this gives
\(K=\pi T/(1-\cos\theta)<4\pi n\).
\end{proof}

\subsection{The first cell}\label{sec:5.5}

\begin{lemma}[First cell]\label{lem:5.8}
For every \(T>1\), we have \(Q_1>0\).
\end{lemma}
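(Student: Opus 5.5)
We need \(Q_1=2E_1-1>0\), that is, \(E_1>1/2\), where \(E_1=\int_0^1x(t)\,dt-x(3/4)\) and the kernel identity \eqref{eq:kernel} gives \(E_1=\int_0^{L}k(s)w(s)\,ds\) with \(L=\min(1,T)=1\) because \(T>1\). Since \(\wmin>2\), it suffices to show \(E_1\ge\wmin/4\), or any bound exceeding \(1/2\). The plan is to split according to where the interface \(c\) lies relative to the first cell.

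\textbf{Case \(c\ge1\).} The whole first cell is outer, and Lemma~\ref{lem:5.3} gives \(E_1>\wmin/3>2/3>1/2\).

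\textbf{Case \(c\le0\).} This cannot occur, since \(c=A(\mu)>0\).

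\textbf{Case \(0<c<1\).} The first cell crosses the interface; it is complete because \(T>1\). I would redo the crossing analysis of Lemma~\ref{lem:5.5} with \(k=1\) and \(h=1-c\). The identity \eqref{eq:5.10} gives \(E_1/\wmin=1/4+O+Y(a_0)-\int_0^hY\), and the task is to bound the inner excess \(Y\).
\begin{itemize}
\item If \(h\ge1/2\) (that is, \(c\le1/2\)), estimate \eqref{eq:5.9} gives \(E>\wmin/8\). This is not enough by itself, so I would instead reuse the finer display in its proof, \(E/\wmin\ge 9/32-\sqrt h/12>19/96\). This only yields \(E>19\wmin/48\), so \(\wmin\ge 48/38\) suffices, which holds since \(\wmin>2\). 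Hence \(E_1>1/2\) in this range.
\item If \(h<1/2\), the node may be outer, and the bound \(9/32-C/12\) of Lemma~\ref{lem:5.5} depends on \(C\), which can be large when \(\mu\) is large. Here I would use the interface angle. When \(c<1\), \(T>1\) and \(c/T=r(\theta)\) is small, so \(\theta\) is small, in particular \(\theta\le\pi/3\). Then Lemma~\ref{lem:5.4} applies for \(v\le c/3\). For larger \(v\) up to \(h<1/2\), I would use Corollary~\ref{cor:5.2}, noting that \(y=T-c>T-1\) is not small when \(\theta\) is small, since \(y/\mu=g/\pi\) is comparable to \(\theta^3\) while \(c\) is comparable to \(\mu\theta^3\) as well. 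Hence \(y/c\) is bounded below by a ratio of the functions \(g\) and \(f_0\).
\end{itemize}

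This crossing subcase with small \(c\), where the \(Y\)-correction must be controlled without the hypothesis \(c\ge1\), is the main obstacle. A fallback is the trivial bound \(E_1\ge 1/4\cdot\wmin-\int_0^hY\) combined with \(Y(v)\le\) (Corollary~\ref{cor:5.2} integrated) \(\le 2P\sqrt y\) minus linear terms. If needed, I would also exploit that \(\wmin=\pi/\theta\) is large when \(\theta\) is small, so the margin \(\wmin/4-1/2\) grows and absorbs the loss. That is, I would show \(\int_0^hY\le C'\,\wmin h^{3/2}\) with \(C'\wmin h^{3/2}<\wmin/4-1/2\) for \(h\le1/2\).
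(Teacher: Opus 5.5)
\textbf{Verdict: genuine gap.} Your proof works for $c\ge1$ and, after a repair, for $c\le1/2$, but it does not cover $1/2<c<1$.

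\textbf{The cases that work.} Your case $c\ge1$ is exactly the paper's. For $c\le1/2$, the argument behind \eqref{eq:5.9} does transfer to the first cell: the cell is complete because $T>1$, and its outer part lies where the kernel is positive. The arithmetic is off, however. The bound $E_1/\wmin\ge 9/32-\sqrt h/12>19/96$ gives $E_1>19\wmin/96$, not $19\wmin/48$. With only $\wmin>2$ this yields $E_1>19/48<1/2$. The step is repairable. The function $r(s)=f_0(s)/(1-\cos s)$ is increasing, since its derivative is $\sin s\,(s-\sin s)/(1-\cos s)^2$, and $r(1)>1/2$. Since $r(\theta)=c/T<1/2$, you get $\theta<1$, hence $\wmin>\pi$ and $E_1>19\pi/96>1/2$.

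\textbf{The gap: $1/2<c<1$.} You claim that $c<1<T$ makes $c/T=r(\theta)$ small and hence $\theta\le\pi/3$. This is false: these conditions only give $r(\theta)<1$, which always holds. For instance, take $\theta$ with $r(\theta)=0.95$ and scale $K$ so that $T=1.02$. Then $c\approx0.97<1$, yet $\theta>\pi/3$, because $r(\pi/3)=\sqrt3-\pi/3<0.69$.

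In that regime two things go wrong.
\begin{itemize}
\item Lemma~\ref{lem:5.4} is unavailable.
\item $\wmin=\pi/\theta$ can be arbitrarily close to $2$, so the margin $\wmin/4-1/2$ on which your fallback relies degenerates. An inequality $\int_0^hY\le C'\wmin h^{3/2}<\wmin/4-1/2$ cannot hold uniformly in $h\le1/2$ unless you couple $h$ to $\pi/2-\theta$, and you do not.
\end{itemize}

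Even when $\theta\le\pi/3$ there are further holes.
\begin{itemize}
\item Lemma~\ref{lem:5.4} controls $Z(v)$ only for $v\le c/3$. This is less than $h=1-c$ when $c<3/4$.
\item The argument behind \eqref{eq:5.9} does not extend below $h=1/2$: its quantity $J_h$ equals $-1/120$ at $h=1/4$.
\end{itemize}
So the range $1/2<c<3/4$ also lacks a working estimate.

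\textbf{How the paper closes this.} It uses two ideas missing from your plan, and it splits at $c=3/4$.
\begin{itemize}
\item For $c\le3/4$ it abandons cell analysis. Cauchy--Schwarz in the radial integral gives $x(t)\le\sqrt{\mu K}\,\sqrt{1-t^2/T^2}$ for $t\ge c$. Applying this at the node and on $[1,T]$, together with the exact area $\int_0^Tx=(K^2-\mu^2)/8$ and a one-variable minimization, yields $Q_1>\pi/3-1>0$.
\item For $3/4<c<1$ one has $h<1/4<c/3$. The paper keeps the outer gain $F(c)$ that comes from the monotonicity of $t^{1/3}w(t)$, rather than discarding it as your use of \eqref{eq:5.10} does. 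It bounds the inner loss $\int_0^hZ$ by $4h^2/3$ in both angular regimes: via Lemma~\ref{lem:5.4} when $\theta\le\pi/3$, and via Corollary~\ref{cor:5.2} with \eqref{eq:5.5} when $\theta\ge\pi/3$.
\end{itemize}
The second case gives $E_1/\wmin>61/240>1/4$, hence $E_1>1/2$ using only $\wmin>2$, with no need for a large $\wmin$.
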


\begin{proof} If \(c\ge1\), the first cell is outer and Lemma~\ref{lem:5.3} applies.
Suppose \(c\le3/4\).

Put \(f_T(t)=\sqrt{1-(t/T)^2}\) on \([0,T]\), and \(e_1=\int_0^1f_T(t)\,dt-f_T(3/4)\).
Using the kernel \(k\) and decreasing ratio \(R_a\) from Lemma~\ref{lem:5.3},
\[
\begin{aligned}
T^2e_1-\frac{11}{96}
&=\int_0^1s k(s)W(s^2/T^2)\,ds\\
&\ge R_{1/T^2}(9/16)
\left(\frac{\pi-\sqrt7}{4}-\frac{11}{96}\right)>0.
\end{aligned}
\]
Here \(\pi>25/8\) and \(\sqrt7<8/3\). Thus \(e_1>11/(96T^2)>1/(12T^2)\).

Put \(C_0=\sqrt{\mu K}\) and \(b=\pi T/2\).
Cauchy--Schwarz in the integral defining \(A\) gives
\[
A(z)/T\le\sqrt{1-z^2/(\mu K)}\quad(z\le\mu),
\qquad x(t)\le C_0f_T(t)\quad(t\ge c),
\]
because the mean of \(R^{-2}\) on \([\mu,K]\) is
\(1/(\mu K)\). Also
\[
\int_0^Tx=\int_0^KA=(K^2-\mu^2)/8,
\qquad (K+\mu)/2=\sqrt{C_0^2+b^2}.
\]
Since \(c\le3/4\), compare at the node and on the complementary
interval \([1,T]\). The area identity then gives
\[
Q_1\ge 2C_0e_1+
b\bigl(\sqrt{C_0^2+b^2}-C_0\bigr)-1.
\]
For \(0<\zeta<b\), a one-variable minimization gives
\[
\min_{u\ge0}\left\{\zeta u+
b\bigl(\sqrt{u^2+b^2}-u\bigr)\right\}
=b\sqrt{\zeta(2b-\zeta)}.
\]
The minimum is attained at
\(u=b(b-\zeta)/\sqrt{\zeta(2b-\zeta)}\).
Take \(\zeta=1/(6T^2)\). Since \(2e_1>\zeta\),
\[
Q_1>\frac\pi{2\sqrt6}\sqrt{\pi T-\zeta}-1
>\frac\pi3-1>0,
\]
where \(\pi T>3\) and \(\zeta<1/3\) suffice.

Suppose \(3/4<c<1\), put \(h=1-c<1/4\), and use the kernel \(k\) from Lemma~\ref{lem:5.3}.
The outer-slope comparison gives
\[
\frac1\wmin\int_0^c k(t)w(t)\,dt
\ge F(c):=\frac35c^2-\frac32c+\frac32(3/4)^{2/3}c^{1/3}.
\]
Indeed, \(t^{1/3}w(t)\) decreases, with value at \(3/4\) greater than \(\wmin c^{1/3}\).
The truncated integral \(\int_0^c k(t)t^{-1/3}dt\) is positive: it is at least its value at \(c=1\), computed in Lemma~\ref{lem:5.3}.

The remaining contribution is negative, with magnitude
\[
N:=\frac1\wmin\int_c^1(1-t)w(t)\,dt=\int_0^h Z(v)\,dv.
\]
If \(\theta\le\pi/3\), then \(h<1/4<c/3\), so
\eqref{eq:5.8} gives \(N<4h^2/3\).
If \(\theta\ge\pi/3\), \eqref{eq:5.3} and \eqref{eq:5.5} give
\[
N<2\sqrt y\int_0^h\frac{h-v}{\sqrt{y-v}}\,dv
\le\frac43h^2.
\]
The last inequality follows because
\(\sqrt{y/(y-v)}\) is nonincreasing in \(y\), so the integral is
largest at \(y=h\).
Consequently \(E_1/\wmin>F(c)-4(1-c)^2/3\).
This expression equals \(61/240>1/4\) at \(c=3/4\) and increases,
since
\[
\frac{d}{dc}\left\{F(c)-\frac43(1-c)^2\right\}
=\frac12\left(\frac3{4c}\right)^{2/3}+\frac76-\frac{22}{15}c
>\frac3{40}>0.
\]
Hence \(E_1>\wmin/4>1/2\), proving \(Q_1>0\).
\end{proof}

\section{Proof of the tail estimates}\label{sec:6}

\subsection{Positivity of the quadrature error}\label{sec:6.1}

Table~\ref{tab:cells} collects the cell bounds of Section~\ref{sec:5}
used below. Here \(C=P/\wmin\) and \(E\) denotes the error of the
crossing cell.

\begin{table}[ht]
\centering\small
\begin{tabular}{@{}llll@{}}
\toprule
cell & hypothesis & bound & source\\
\midrule
first, \(c\ge1\) (outer) & none & \(E_1>\wmin/3\) & Lemma~\ref{lem:5.3}\\
first, \(c<1\) & none & \(2E_1-1>0\) & Lemma~\ref{lem:5.8}\\
complete outer & none & \(E_j>\wmin/4\) & Lemma~\ref{lem:5.3}\\
inner, complete or terminal & node below \(T\) & \(E_j>\wmin/4\) & Lemma~\ref{lem:5.3}\\
complete crossing & \(\theta\le\pi/3\), \(c\ge1\) & \(E>\wmin/8\) & Lemma~\ref{lem:5.5}\\
complete crossing & \(\mu\ge1\) & \(E/\wmin\ge9/32-C/12\) & Lemma~\ref{lem:5.5}\\
terminal crossing & \(\mu\ge1\), \(n\ge2\) & \(E_n/\wmin\ge9/32-C/(6\sqrt2)\), \(K<4\pi n\) & Lemma~\ref{lem:5.7}\\
\bottomrule
\end{tabular}
\caption{Cell bounds used in the proof of Proposition~\ref{prop:3.3}.}
\label{tab:cells}
\end{table}

\begin{proof}[Proof of Proposition~\ref{prop:3.3}]
If \(c<1\), Lemma~\ref{lem:5.8} gives \(Q_1>0\), and every later
cell is wholly inner. Hence \(Q_n\ge Q_1+(n-1)\eta>0\).

If no cell crosses the interface, all cells are regular and
\(Q_n>n\eta>0\).

It remains to consider \(c\ge1\) with a crossing cell \(k\le n\). Then \(k\ge2\), so the first cell is outer. Write \(E=E_k\), \(\varepsilon=\pi/2-\theta\), and \(q_*=n\varepsilon/\pi\). Using \(E_1>\wmin/3\) and \(E_j>\wmin/4\) on the other regular cells gives
\begin{equation}\label{eq:6.1}
Q_n/\wmin>q_*+2E/\wmin-1/3.
\end{equation}
If the crossing is complete and \(\theta\le\pi/3\), then \(q_*\ge n/6\ge1/3\) and Lemma~\ref{lem:5.5} gives \(E>\wmin/8\), proving positivity.

In the remaining cases,
\[
E/\wmin\ge\frac9{32}-\frac C{6\sqrt2},\qquad K<4\pi n.
\]
For a terminal crossing these are exactly Lemma~\ref{lem:5.7}.
For a complete crossing, Lemma~\ref{lem:5.5} gives the stronger lower bound \(E/\wmin\ge9/32-C/12\), while \(\theta\ge\pi/3\) and \(c<n\) give
\[
K=\frac{\pi c}{f_0(\theta)}<4\pi n,
\qquad f_0(\theta)\ge f_0(\pi/3)=\frac{\sqrt3}{2}-\frac\pi6>\frac14.
\]
Since \(\varepsilon>\cos\theta\) and \(\theta^2/(1-\cos\theta)<\pi^2/4\),
\[
\frac{C^2}{q_*}
=\frac{K\theta^2\cos\theta}{2n\varepsilon(1-\cos\theta)}
<\frac{K\pi^2}{8n}<\frac{\pi^3}{2}<16.
\]

Substituting these bounds into \eqref{eq:6.1} and completing the
square gives
\[
\begin{aligned}
\frac{Q_n}{\wmin}
&>q_*+\frac{11}{48}-\frac C{3\sqrt2}\\
&>q_*+\frac{11}{48}-\sqrt{\frac{8q_*}{9}}
=\left(\sqrt{q_*}-\frac{\sqrt2}{3}\right)^2+\frac1{144}>0.
\end{aligned}
\]
\end{proof}

\subsection{Comparison with the ball}\label{sec:6.3}

Write \(D_K,q_K\) for the quantities corresponding to the ball
action \(G_K\), and let \(x_K=G_K^{-1}\), extended by zero above \(K/\pi\).
Let \(E_j^K\) and \(Q_n^K\) denote the quantities defined in
Section~\ref{sec:5.2} with \(x\) replaced by \(x_K\) and \(T\) by \(K/\pi\).
Since \(K>\pi\), the first-cell estimate in Lemma~\ref{lem:5.3} gives \(E_1^K>2/3\).
The extended inverse is convex, and \(-x_K'>2\) below \(K/\pi\). At every included node, its supporting tangent therefore gives
\[
E_j^K:=\int_{j-1}^jx_K(t)\,dt-x_K(j-1/4)
\ge-\frac14x_K'(j-1/4)>\frac12.
\]
Thus Lemma~\ref{lem:3.2} yields
\begin{equation}\label{eq:6.5}
q_K(a)=n\ge1\quad\Longrightarrow\quad
D_K(a,\alpha)>Q_n^K\ge Q_1^K=2E_1^K-1>1/3.
\end{equation}
If \(q(a)\ge1\), then \(q_K(a)\ge q(a)\ge1\).
Since \(A=G_K-G_\mu\) and \(q\le q_K\),
\[
D_A(a,\alpha)\ge D_K(a,\alpha)-2\int_a^\infty G_\mu(z)\,dz
\ge D_K(a,\alpha)-\frac{\mu^2}{4}
>\frac13-\frac{\mu^2}{4}\ge\frac1{12}>0.
\]
This proves Proposition~\ref{prop:3.4} and completes the proof of Theorem~\ref{thm:tail}.

\section{Consequences and discussion}\label{sec:applications}

\subsection{Products}

\begin{corollary}[Products]\label{cor:products}
Let \(\Omega_2\subset\mathbb R^k\), \(k\ge1\), be a nonempty bounded open set. For
every integer \(d\ge2\) and every \(\Lambda>0\),
\[
N_D(A^d_{r,R}\times\Omega_2,\Lambda)
<L_{0,d+k}\,|A^d_{r,R}\times\Omega_2|\,\Lambda^{(d+k)/2}.
\]
For every \(\alpha\in\mathbb R\), the same strict inequality, with
\(d=2\), holds for the operator
\(H_\alpha\otimes1+1\otimes(-\Delta^D_{\Omega_2})\) on
\(A^2_{r,R}\times\Omega_2\).
\end{corollary}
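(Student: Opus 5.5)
The plan is to run Laptev's product argument \cite[Theorem~2.8]{Laptev}, with Theorem~\ref{thm:shell} (or Theorem~\ref{thm:magnetic} for the flux) as input. Write \(\Omega=A^d_{r,R}\times\Omega_2\) and let \(\{\nu_k\}\) be the Dirichlet eigenvalues of \(\Omega_2\), with multiplicity. Separation of variables gives the spectrum of the product operator as the set of sums \(\lambda_j+\nu_k\), where \(\lambda_j\) runs over the eigenvalues of the annular factor (the Dirichlet Laplacian on the shell, or \(H_\alpha\)). Hence
\[
N(\Omega,\Lambda)=\sum_{k:\,\nu_k<\Lambda}N_1(\Lambda-\nu_k),
\]
where \(N_1\) is the inclusive count of the first factor. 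Terms with \(\nu_k\ge\Lambda\) vanish: the first-factor eigenvalues are positive, and \(N_1(0)=0\) by the positivity shown in Proposition~\ref{prop:2.1}.

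For each term with \(\nu_k<\Lambda\), apply the strict inequality of Theorem~\ref{thm:shell} (respectively Theorem~\ref{thm:magnetic}). This gives
\[
N(\Omega,\Lambda)<L_{0,d}|A^d_{r,R}|\sum_{k}(\Lambda-\nu_k)_+^{d/2},
\]
and the inequality is strict provided at least one \(k\) satisfies \(\nu_k<\Lambda\). If no such \(k\) exists, the count is zero and the right-hand side of the corollary is positive, so strictness holds trivially.

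Next, bound the Riesz mean \(\sum_k(\Lambda-\nu_k)_+^{d/2}\) by the Berezin--Li--Yau inequality in its Riesz-mean form for exponent \(\gamma=d/2\ge1\):
\[
\sum_k(\Lambda-\nu_k)_+^{\gamma}\le L_{\gamma,k}^{\mathrm{cl}}|\Omega_2|\Lambda^{\gamma+k/2},
\qquad
L_{\gamma,k}^{\mathrm{cl}}=\frac{\Gamma(\gamma+1)}{(4\pi)^{k/2}\Gamma(\gamma+k/2+1)}.
\]
This holds for any bounded open set because \(\gamma\ge1\). This is precisely why \(d\ge2\) is needed.

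Finally, check that the constants combine to the Weyl constant:
\[
L_{0,d}\,L_{d/2,k}^{\mathrm{cl}}=L_{0,d+k}.
\]
Using \(L_{0,d}=\Gamma(d/2+1)^{-1}(4\pi)^{-d/2}\), this is the identity
\[
\frac{1}{\Gamma(d/2+1)(4\pi)^{d/2}}\cdot\frac{\Gamma(d/2+1)}{(4\pi)^{k/2}\Gamma((d+k)/2+1)}=\frac{1}{(4\pi)^{(d+k)/2}\Gamma((d+k)/2+1)}.
\]
Together with \(|A^d_{r,R}||\Omega_2|=|\Omega|\), this gives the claimed bound.

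The magnetic case is identical, with \(d=2\) and \(N_1=N_\alpha\). The operator \(H_\alpha\otimes1+1\otimes(-\Delta^D_{\Omega_2})\) is a sum of commuting nonnegative operators with compact resolvents, so its spectrum consists of the sums \(\lambda_j^\alpha+\nu_k\) with multiplicities.

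The only delicate point is the bookkeeping for strictness and endpoints. Inclusive counts are used throughout, and the strictness must come from a single term with \(\Lambda-\nu_k>0\), where the strict theorem applies. The Berezin--Li--Yau step need not be strict. I expect no real obstacle beyond verifying the Riesz-mean constant.
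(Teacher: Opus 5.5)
Your proposal is correct and follows the paper's proof essentially step for step. The steps coincide: the exact inclusive separation identity $N=\sum_{\tau_i<\Lambda}N_1(\Lambda-\tau_i)$, the strict bound from Theorem~\ref{thm:shell} (or Theorem~\ref{thm:magnetic}) on each of the finitely many nonzero terms, Berezin's Riesz-mean inequality with exponent $d/2\ge1$, and the constant identity $L_{0,d}L^{\mathrm{cl}}_{d/2,k}=L_{0,d+k}$. The only blemish is cosmetic: you use $k$ both for the dimension of $\Omega_2$ and for the eigenvalue index, so rename one of them.
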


\begin{proof}
We use Laptev's argument \cite[Theorem~2.8]{Laptev}. Write
\(\Omega=A^d_{r,R}\) and let \(\tau_1\le\tau_2\le\cdots\) be the
Dirichlet eigenvalues of \(\Omega_2\). Separation of variables gives
\[
N_D(\Omega\times\Omega_2,\Lambda)
=\sum_{i:\,\tau_i<\Lambda}N_D(\Omega,\Lambda-\tau_i).
\]
If \(\tau_1\ge\Lambda\), the count vanishes. Otherwise
Theorem~\ref{thm:shell} bounds each of the finitely many terms strictly
by \(L_{0,d}|\Omega|(\Lambda-\tau_i)^{d/2}\). Since \(d/2\ge1\), the
Berezin inequality for Riesz means \cite{Berezin}, in the form used in
\cite{Laptev}, gives
\[
\sum_i(\Lambda-\tau_i)_+^{d/2}\le
\frac{\Gamma(d/2+1)}{(4\pi)^{k/2}\Gamma((d+k)/2+1)}\,|\Omega_2|\,
\Lambda^{(d+k)/2},
\]
and \(L_{0,d}\Gamma(d/2+1)(4\pi)^{-k/2}\Gamma((d+k)/2+1)^{-1}=L_{0,d+k}\).
The magnetic case is identical, with Theorem~\ref{thm:magnetic} in
place of Theorem~\ref{thm:shell}.
\end{proof}

\subsection{Enhanced bounds}

The following bound combines Theorem~\ref{thm:shell} with known
two-term asymptotics. Its constant depends on the fixed shell;
no uniformity over radius ratios or dimensions is asserted.

\begin{corollary}[Enhanced Dirichlet bound]\label{cor:enhanced}
For every fixed shell \(\Omega=A^d_{r,R}\), there is a constant
\(C_\Omega>0\) such that
\begin{equation}\label{eq:enhanced}
N_D(\Omega,\Lambda)\le L_{0,d}|\Omega|\Lambda^{(d-1)/2}
               (\sqrt\Lambda-C_\Omega)_+\qquad(\Lambda\ge0).
\end{equation}
\end{corollary}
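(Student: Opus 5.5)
The plan is to combine the strict inequality of Theorem~\ref{thm:shell} on a compact range of energies with the two-term asymptotics of Guo, Jiang, Wang and Yang \cite{GJWY} at large energies. The target \eqref{eq:enhanced} can be rewritten as
\[
N_D(\Omega,\Lambda)\le L_{0,d}|\Omega|\Lambda^{d/2}-C_\Omega L_{0,d}|\Omega|\Lambda^{(d-1)/2}
\]
whenever \(\sqrt\Lambda\ge C_\Omega\). For \(\sqrt\Lambda<C_\Omega\) the right-hand side is zero, so in that range we need \(N_D(\Omega,\Lambda)=0\), that is, \(C_\Omega^2\le\lambda_1^D(\Omega)\). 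Any admissible constant must therefore satisfy \(C_\Omega\le\sqrt{\lambda_1^D}\).

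For the high-energy range we use \cite{GJWY}, which gives (for shells, whose billiard is not periodic in the relevant sense)
\[
N_D(\Omega,\Lambda)=L_{0,d}|\Omega|\Lambda^{d/2}-\tfrac14L_{0,d-1}|\partial\Omega|\Lambda^{(d-1)/2}+o(\Lambda^{(d-1)/2}).
\]
Set \(c_1=\tfrac14L_{0,d-1}|\partial\Omega|/(L_{0,d}|\Omega|)>0\). Then there is \(\Lambda_0\) such that for \(\Lambda\ge\Lambda_0\) the count is at most \(L_{0,d}|\Omega|\Lambda^{(d-1)/2}(\sqrt\Lambda-c_1/2)\).

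On the compact interval \([\lambda_1^D,\Lambda_0]\) we use the strictness of Theorem~\ref{thm:shell} together with the step structure of \(N_D\). The function \(N_D\) is a right-continuous step function with finitely many jumps on this interval, and it is constant on each interval \([\lambda_j,\lambda_{j+1})\). On such an interval the ratio \(\Lambda\mapsto\sqrt\Lambda-N_D(\Omega,\Lambda)/(L_{0,d}|\Omega|\Lambda^{(d-1)/2})\) is increasing. Its infimum is therefore attained at the left endpoint \(\Lambda=\lambda_j\), where Theorem~\ref{thm:shell} makes it strictly positive. Taking the minimum over the finitely many \(j\) with \(\lambda_j\le\Lambda_0\) gives a positive \(c_2\) such that \(N_D\le L_{0,d}|\Omega|\Lambda^{(d-1)/2}(\sqrt\Lambda-c_2)\) on \([\lambda_1^D,\Lambda_0]\).

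We then set \(C_\Omega=\min(c_1/2,c_2,\sqrt{\lambda_1^D})\). For \(\Lambda<\lambda_1^D\) both sides behave correctly: the count is zero and the right-hand side is nonnegative. The main point to check is the correct citation of the two-term asymptotics for shells, in particular that the non-periodicity condition holds, which \cite{GJWY} establishes. One could instead use only a one-sided bound \(N_D\le\) Weyl \(-c\Lambda^{(d-1)/2}\) for large \(\Lambda\), and the rest of the argument is elementary.
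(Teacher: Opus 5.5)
Your proposal is correct and is essentially the paper's proof. The paper likewise takes half the boundary coefficient from the two-term asymptotics of \cite{GJWY} above a threshold $\Lambda_0\ge\lambda_1$; on $[\lambda_1,\Lambda_0]$ it uses that $\sqrt\Lambda-k\Lambda^{-(d-1)/2}/(L_{0,d}|\Omega|)$ increases for fixed $k=N_D(\Omega,\Lambda)$, which reduces the check to finitely many eigenvalues where Theorem~\ref{thm:shell} gives strict positivity, exactly as in your step-function argument. The extra entry $\sqrt{\lambda_1^D}$ in your minimum is harmless but redundant, since $c_2$ is at most the value of your ratio at $\lambda_1^D$, which is already strictly below $\sqrt{\lambda_1^D}$.
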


\begin{proof}
Write \(\lambda_j=\lambda_j^D(\Omega)\), put \(a=L_{0,d}|\Omega|\) and
\(b=\tfrac14L_{0,d-1}|\partial\Omega|>0\).
The two-term shell asymptotics of Guo, Jiang, Wang and Yang
\cite[Theorem~1.1(2) in the author version]{GJWY}
(for \(d=2\), see also \cite{GMWW}) give
\[
N_D(\Omega,\Lambda)=a\Lambda^{d/2}-b\Lambda^{(d-1)/2}
             +O(\Lambda^{d/2-2/3}).
\]
Choose \(\Lambda_0\ge\lambda_1\) so that the difference between the
leading Weyl term and \(N_D(\Omega,\Lambda)\) is at least
\((b/2)\Lambda^{(d-1)/2}\) for \(\Lambda\ge\Lambda_0\), and put
\(J=N_D(\Omega,\Lambda_0)\). By Theorem~\ref{thm:shell},
\[
C_\Omega:=\min\left\{\frac b{2a},\
\min_{1\le j\le J}
\frac{a\lambda_j^{d/2}-j}{a\lambda_j^{(d-1)/2}}\right\}>0.
\]
The \(j=1\) term also shows that \(C_\Omega<\sqrt{\lambda_1}\).
For fixed \(k\ge1\), the function
\(\sqrt\Lambda-(k/a)\Lambda^{-(d-1)/2}\) increases.
If \(\lambda_1\le\Lambda\le\Lambda_0\), take \(k=N_D(\Omega,\Lambda)\)
and compare with \(\Lambda=\lambda_k\); the displayed minimum gives
\eqref{eq:enhanced}. High energies follow from the choice of
\(\Lambda_0\), and the count vanishes below \(\lambda_1\).
\end{proof}

Two-term improvements of P\'olya's inequality with explicit constants
are known for the disk \cite[Theorem~1.5]{FLPS2026} and for balls and
cylinders \cite{GMWZ}. In the terminology of
\cite[Remark after Theorem~1.1, equation~(1.4)]{HJLW},
Corollary~\ref{cor:enhanced} establishes the enhanced Dirichlet
P\'olya inequality for each fixed spherical shell.

\subsection{Cubical perforations}

We use the cubical-removal argument of Jiang and Lin
\cite[\S3.2, Lemmas~3.4 and~3.7]{JL}, whose formulation for domains
satisfying the enhanced Dirichlet inequality is recorded in
\cite[paragraph following Theorem~1.2]{HJLW}.
We include the bracketing proof; the new input is the enhanced
inequality for spherical shells.

\begin{corollary}[Finite cubical perforations]\label{cor:holes}
Let \(\Omega=A^d_{r,R}\), with \(C_\Omega\) as in
Corollary~\ref{cor:enhanced}, and let \(U_1,\ldots,U_M\) be open cubes
of side lengths \(\ell_1,\ldots,\ell_M\), with pairwise disjoint
closures contained in \(\Omega\). If
\begin{equation}\label{eq:holes}
\sum_{j=1}^M\ell_j^{d-1}
<\frac{C_\Omega|\Omega|}{d^{3/2}\pi},
\end{equation}
then \(\Omega'=\Omega\setminus\bigcup_j\overline{U_j}\)
satisfies an enhanced Dirichlet bound of the form
\eqref{eq:enhanced}, and hence P\'olya's inequality.
The condition is independent of the locations of the cubes.
\end{corollary}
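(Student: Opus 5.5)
The plan is Dirichlet bracketing combined with the enhanced bound of Corollary~\ref{cor:enhanced}. Removing the closed cubes yields the disjoint union of \(\Omega'\) and the open cubes \(U_j\), and together they fill \(\Omega\) up to the null set \(\bigcup_j\partial U_j\). Dirichlet monotonicity for disjoint open subsets of \(\Omega\) therefore gives
\[
N_D(\Omega',\Lambda)+\sum_{j=1}^MN_D(U_j,\Lambda)\le N_D(\Omega,\Lambda).
\]
I bound the left-hand cube counts from below and the right-hand side from above by Corollary~\ref{cor:enhanced}. This gives
\[
N_D(\Omega',\Lambda)\le L_{0,d}|\Omega|\Lambda^{(d-1)/2}(\sqrt\Lambda-C_\Omega)_+-\sum_jN_D(U_j,\Lambda).
\]

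For the cubes I will use an explicit lower bound on the Dirichlet count of a cube of side \(\ell\). The eigenvalues are \(\pi^2|k|^2/\ell^2\) with \(k\in\mathbb N^d\). Comparing the lattice count in the positive orthant with the volume of a shifted ball sector, for instance by attaching to each point \(k\) the cube \(k-[0,1]^d\), gives
\[
N_D(U,\Lambda)\ge L_{0,d}\ell^d\Lambda^{d/2}-\tfrac{d}{2}\cdot(\text{face term}).
\]
The crude form I aim for, in the spirit of \cite[Lemma~3.4]{JL}, is
\[
N_D(U,\Lambda)\ge L_{0,d}\ell^d\Lambda^{d/2}-c_d\,\ell^{d-1}\Lambda^{(d-1)/2},
\]
with a dimensional constant \(c_d\) controlled by \(d^{3/2}\) times \(L_{0,d}\) up to \(\pi\). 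The lattice points lie in the region \(\{k: |k|\le \ell\sqrt\Lambda/\pi\}\), and the missing volume is contained in a \(\sqrt d\)-neighbourhood of the coordinate hyperplanes intersected with the ball. That neighbourhood has volume at most \(d\cdot\sqrt d\cdot\) (the measure of a \((d-1)\)-ball section), which explains the factor \(d^{3/2}\).

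Substituting, and using \(|\Omega'|=|\Omega|-\sum\ell_j^d\), the volume terms recombine into \(L_{0,d}|\Omega'|\Lambda^{d/2}\). The remaining second-order terms are
\[
-L_{0,d}|\Omega|C_\Omega\Lambda^{(d-1)/2}+c_d\sum_j\ell_j^{d-1}\Lambda^{(d-1)/2}.
\]
Their sum is negative, with a margin of the form \(-C'L_{0,d}|\Omega'|\Lambda^{(d-1)/2}\), exactly when \eqref{eq:holes} holds, provided the constant bookkeeping makes \(c_d/L_{0,d}\le d^{3/2}\pi\). This yields \eqref{eq:enhanced} for \(\Omega'\) with constant \(C_{\Omega'}>0\) whenever \(\sqrt\Lambda\ge C_\Omega\).

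For small \(\Lambda\), where \((\sqrt\Lambda-C_\Omega)_+=0\), the bound for \(\Omega\) already gives \(N_D(\Omega,\Lambda)=0\). Hence \(N_D(\Omega',\Lambda)=0\) by domain monotonicity, and I then choose \(C_{\Omega'}\le C_\Omega\) compatibly.

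Nothing in this argument depends on the positions of the cubes. P\'olya's inequality follows because \((\sqrt\Lambda-C)_+\le\sqrt\Lambda\).

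I expect the main obstacle to be the explicit cube lower bound with the precise constant \(d^{3/2}\pi\) matching \eqref{eq:holes}. I would first try the neighbourhood-of-hyperplanes volume estimate described above, and then verify the normalization against \(L_{0,d}\) carefully. A secondary point needing care is the case where \(\sqrt\Lambda\) is only slightly above \(C_\Omega\), where the cube lower bound may be negative. There I would use \(N_D(U_j,\Lambda)\ge0\) and the positive part directly.
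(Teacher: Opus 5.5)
Your architecture is the paper's: bracketing, subtracting cube lower bounds from \eqref{eq:enhanced} so that the volumes recombine into \(|\Omega'|\), the margin \(B=|\Omega|C_\Omega-d^{3/2}\pi\sum_j\ell_j^{d-1}>0\) under \eqref{eq:holes}, domain monotonicity for \(\Lambda<C_\Omega^2\), and \(C_{\Omega'}=\min\{C_\Omega,B/|\Omega'|\}\). The gap is the one quantitative input you leave open, namely the cube bound
\[
N_D(U,\Lambda)\ge L_{0,d}\bigl(\ell^d\Lambda^{d/2}-d^{3/2}\pi\,\ell^{d-1}\Lambda^{(d-1)/2}\bigr).
\]
The estimate you sketch does not give this constant. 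Put \(X=\ell\sqrt\Lambda/\pi\). Inside the orthant ball, a \(\sqrt d\)-neighbourhood of the \(d\) coordinate hyperplanes has volume about \(d\sqrt d\,\omega_{d-1}X^{d-1}/2^{d-1}=d^{3/2}L_{0,d-1}\ell^{d-1}\Lambda^{(d-1)/2}\), and this is sharp to leading order. It forces \(c_d/L_{0,d}=d^{3/2}\pi\cdot2\omega_{d-1}/\omega_d\). Since \(\omega_d<2\omega_{d-1}\) for every \(d\ge2\) (already \(\pi<4\) when \(d=2\)), your condition \(c_d/L_{0,d}\le d^{3/2}\pi\) fails in every dimension. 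So this route proves the corollary only under a strictly smaller threshold than \eqref{eq:holes}. The factor \(d^{3/2}\) there is not (number of hyperplanes)\(\times\)(width).

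The paper's argument goes as follows. If \(y\) lies in the open positive orthant with \(|y|\le X-\sqrt d\), then \(\lceil y\rceil\in\mathbb N^d\) and \(|\lceil y\rceil|\le|y|+\sqrt d\le X\). Hence the unit cubes attached to the counted lattice points cover the orthant ball of radius \(X-\sqrt d\), and \(N_D(U,\Lambda)\ge2^{-d}\omega_d(X-\sqrt d)_+^d\). Bernoulli's inequality \((1-u)^d\ge1-du\) then gives the displayed bound exactly: the \(d\) is the exponent and the \(\sqrt d\) is the radial loss.

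Alternatively, your shifted-sector idea can be repaired. The cubes \(k-[0,1]^d\) cover a translate of \(\{z:|z|\le X,\ z_i\ge1\ \forall i\}\). Its volume deficit from the orthant ball lies in the unit-width slabs \(\{0\le z_i<1\}\); note that the uncovered set itself sits near the sphere, not near the hyperplanes. This gives \(c_d=d\,L_{0,d-1}\). The needed inequality \(2\omega_{d-1}\le\sqrt d\,\omega_d\) then follows for \(d\ge2\) from Wendel's inequality \(\Gamma(x+\tfrac12)\le\sqrt x\,\Gamma(x)\) at \(x=(d+1)/2\). That route gives a slightly better constant than the paper's, at the price of a Gamma-function check.

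Finally, your worry about \(\sqrt\Lambda\) just above \(C_\Omega\) is unnecessary. The linear cube bound holds for all \(\Lambda\ge0\), since for \(X<\sqrt d\) its right side is negative, so it can be subtracted everywhere. Switching to \(N_D(U_j,\Lambda)\ge0\) in that range would give up the recombination into \(|\Omega'|\).
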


\begin{proof}
For a cube \(U\) of side length \(\ell\), count positive integer
lattice points in a ball of radius \(\ell\sqrt\Lambda/\pi\).
Rounding each coordinate upward moves a point by at most
\(\sqrt d\). The associated unit cubes therefore cover the smaller
positive-orthant ball. Taking volumes and applying Bernoulli's
inequality gives
\[
\begin{aligned}
N_D(U,\Lambda)&\ge\frac{\omega_d}{2^d}
              \left(\frac{\ell\sqrt\Lambda}{\pi}-\sqrt d\right)_+^d\\
&\ge L_{0,d}\left(\ell^d \Lambda^{d/2}
       -d^{3/2}\pi\ell^{d-1}\Lambda^{(d-1)/2}\right).
\end{aligned}
\]
Dirichlet bracketing gives
\(N_D(\Omega',\Lambda)+\sum_jN_D(U_j,\Lambda)\le N_D(\Omega,\Lambda)\).
For \(\Lambda\ge C_\Omega^2\), subtract the cube lower bounds from
\eqref{eq:enhanced} to obtain
\[
N_D(\Omega',\Lambda)\le L_{0,d}\left(
|\Omega'|\Lambda^{d/2}-B \Lambda^{(d-1)/2}\right),\qquad
B=|\Omega|C_\Omega-d^{3/2}\pi\sum_j\ell_j^{d-1}>0.
\]
For \(\Lambda<C_\Omega^2\), domain monotonicity gives
\(N_D(\Omega',\Lambda)=0\). Thus the enhanced inequality holds with
\(C_{\Omega'}=\min\{C_\Omega,B/|\Omega'|\}>0\).
\end{proof}

\subsection{Quantitative questions and further applications}

The proof of Theorem~\ref{thm:tail} controls every angular tail and
every lattice translation by quadrature of the inverse action.
One can therefore ask which other radial actions have positive
quadrature errors for every number of included levels.
The regular-cell comparisons in Lemma~\ref{lem:5.3} use
monotonicity of the inverse slope and concavity of its normalization
\(H(t)=\sqrt{T-t}\,w(t)\).
The specific shell geometry enters in the estimates for the first
cell and for cells crossing the change of convexity.
Analogous estimates for other action curves could extend the method
beyond spherical shells.

An explicit lower bound for \(C_\Omega\) in terms of \(r\), \(R\)
and \(d\) would quantify Corollary~\ref{cor:enhanced}.
The constant obtained here is positive, but its definition uses a
finite part of the spectrum and an unspecified threshold in the
two-term asymptotics. An effective bound would give an explicit
correction to the individual eigenvalue inequalities and a geometric
threshold for the cubical perforations in Corollary~\ref{cor:holes}.
Its dependence on the radius ratio in the thin-shell and small-hole
limits is also of interest.

\appendix
\section{A moment proof of the inverse-slope shape}\label{app:shape}

\begin{proof}[Proof of Lemma~\ref{lem:5.1}]
We first prove a moment inequality. For a random variable \(Y\ge1\)
with finite fourth moment, put
\[
h=(\mathbb E(Y+1)^{-1})^{-1},\quad m=\mathbb EY,\quad
m_2=\mathbb EY^2,\quad v=\operatorname{Var}Y,\quad
V=\operatorname{Var}(Y^2).
\]
Then \(0\le D:=m_2-(h-1)^2\le V/2\).
The lower bound follows from \(h\le m+1\) and \(m^2\le m_2\).
For the upper bound, \(d=m+1-h\ge0\) satisfies
\[
d=\frac h{m+1}\mathbb E\frac{(Y-m)^2}{Y+1}\le v/2,
\]
so \(m_2-(h-1)^2=v+2md-d^2\le(m+1)v\). Also
\[
\operatorname{Cov}(Y^2,Y)=\mathbb E[Y(Y-m)^2]+mv\ge(m+1)v;
\]
Cauchy--Schwarz gives \(V\ge(m+1)^2v\). Hence
\[
D\le(m+1)v\le\frac{V}{m+1}\le\frac V2,
\]
which proves the claim.

Fix \(0<z<\mu\), put \(S=T-A(z)\), and use
\[
A(z)=\frac1\pi\int_\mu^K\sqrt{1-z^2/R^2}\,dR,\qquad
Y(R)=(1-z^2/R^2)^{-1/2},
\]
with probability measure
\[
d\nu(R)=\frac{Y(R)-Y(R)^{-1}}{\pi z\sigma(z)}\,dR.
\]
All expectations below are taken with respect to \(\nu\).
For fixed \(z<\mu\), \(Y\) is bounded, so the moment inequality applies.
Using \(\partial_zY=zR^{-2}Y^3\) and \(z^2R^{-2}=1-Y^{-2}\), one finds
\[
\pi S=\int_\mu^K(1-Y^{-1})\,dR,\quad
\pi z\sigma=\int_\mu^K(Y-Y^{-1})\,dR,\quad
\pi z^2\sigma'=\int_\mu^K(Y^3-Y)\,dR,
\]
and \(\pi(z^3\sigma''+2z^2\sigma')=\int_\mu^K(3Y^2-1)(Y^3-Y)\,dR\).
Since \(1-Y^{-1}=(Y-Y^{-1})/(Y+1)\) and \(Y^3-Y=Y^2(Y-Y^{-1})\),
division by \(\pi z\sigma\) gives
\[
z\sigma=Sh,\quad z^2\sigma'=Sh\,m_2,\quad
z^3\sigma''=3Sh(\mathbb EY^4-m_2),\qquad h>2,
\]
where \(h>2\) because \(Y>1\) on \((\mu,K]\).
Since \(t=T-S\) and \(dS/dz=\sigma\), we have \(d/dt=-\sigma^{-1}d/dz\)
on the inner branch, and a direct computation with \(H=\sqrt S/\sigma\) gives
\[
-H''(t)=\frac{\mathcal R(z)}{4S^{3/2}\sigma^5},
\qquad z=x(t),
\]
where all derivatives of \(\sigma\) are evaluated at \(z\), and
\[
\begin{aligned}
\mathcal R(z)
&=\sigma^4+4S\sigma^2\sigma'+4S^2\sigma\sigma''
  -12S^2(\sigma')^2\\
&=\frac{S^4h^2}{z^4}\{h^2+4(h-3)m_2+12V\}.
\end{aligned}
\]
If \(2<h<3\), then \(4(h-3)D\ge-2V\), while for \(h\ge3\)
this term is nonnegative. Thus the bracket satisfies
\[
\begin{aligned}
h^2+4(h-3)m_2+12V
&=(h-2)^2(4h-3)+4(h-3)D+12V\\
&\ge(h-2)^2(4h-3)+10V>0.
\end{aligned}
\]

Thus \(H''<0\). Moreover,
\[
\frac{dH}{dS}=\frac{\sigma^2-2S\sigma'}{2\sqrt S\,\sigma^3}<0,
\]
because \(h\le m+1<2m_2\); hence \(H'>0\).
Continuity at \(c\) gives the first identity in \eqref{eq:5.2}.
The expansions
\[
S(z)=\frac{Tz^2}{2\mu K}+O(z^4),\qquad
\sigma(z)=\frac{Tz}{\mu K}+O(z^3)
\]
give the second identity. The finite limits extend monotonicity
and concavity to \([c,T]\).
\end{proof}

\section*{Funding}

This work was supported by the Department of Education of Guangdong
Province (Grant No.~2025ZDJS099), and  Nanfang College, Guangzhou
(Grant No.~2026BQ010).

\section*{Use of AI tools}

During the preparation of this work, the author used ChatGPT (OpenAI)
to assist with mathematical exploration and proof development,
checking intermediate arguments and calculations, literature searches,
and drafting and revising the text and \LaTeX{} source.
The author takes full responsibility for the results, proofs,
references, and final manuscript.

\end{document}